\newtheorem{theorem}{Theorem}[section]
\newtheorem{lem}[theorem]{Lemma}
\newtheorem{prop}[theorem]{Proposition}
\newtheorem{cor}[theorem]{Corollary}
\theoremstyle{definition}
\newtheorem{definition}[theorem]{Definition}
\newtheorem{example}[theorem]{Example}
\theoremstyle{remark}
\newtheorem{remark}[theorem]{Remark}
\newtheorem{note}[theorem]{Note}
\numberwithin{equation}{section}
\begin{document}

\newcommand{\spacing}[1]{\renewcommand{\baselinestretch}{#1}\large\normalsize}
\spacing{1.14}

\title{comparison geometry for an extension of Ricci tensor}

\author [S. H. Fatemi and S. Azami]{Seyed Hamed  Fatemi and Shahroud Azami}

\address{Department of Mathematics\\ Tarbiat modares university\\Tehran\\ Iran} \email{fatemi.shamed@gmail.com}
\address{Department of Mathematics, Faculty of Sciences\\ Imam Khomeini International University\\ Qazvin\\ Iran} \email{azami@sci.ikiu.ac.ir}

\keywords{Bochner technique, Meyer's theorem, Cheeger-Gromoll splitting theorem, Volume comparison theorem, End of manifold
\\
AMS 2010 Mathematics Subject Classification: 53C20 , 53C23. }

\date{\today}

\begin{abstract}
For a complete Riemannian manifold $M$ with an (1,1)-elliptic Codazzi self-adjoint tensor field $A$ on it, we use the divergence type operator ${L_A}(u): = div(A\nabla u)$ and an extension of the Ricci tensor to extend some major comparison theorems in Riemannian geometry. In fact we extend theorems like mean curvature comparison theorem,  Bishop-Gromov volume comparison theorem, Cheeger-Gromoll splitting theorem and some of their famous topological consequences. Also we get an upper bound for the end of manifolds by restrictions on the extended Ricci tensor. The results can be applicable for some kind of Riemannian hypersurfaces when the ambient manifold is Riemannian or Lorentzian with constant sectional curvature.
\end{abstract}

\maketitle

\section{Introduction}In comparison geometry, one of the most important theorems is the Laplacian comparison theorem for distance function in complete Riemannian manifolds. This theorem states that for a complete Riemnnian manifold $M$ with $Ri{c_M} \ge (n - 1)H$, we have ${\Delta _M}r \le {\Delta _H}r$, where ${\Delta _M}r$ is the Laplacian of the distance function on $M$ and ${\Delta _H}r$ is the Laplacian of the distance function on the model space (i.e. an space form) with constant sectional curvature $H$. This theorem has many consequences in Riemannian geometry such as Meyer's theorem, Bishop-Gromov volume comparison theorem  \cite{zhu1997comparison}, Cheeger-Gromoll splitting theorem \cite{cheeger1971splitting} and their applications in topology \cite{daicomparison,zhu1997comparison} and etc.
\par There are some extensions for the Laplace operator. One of the well known extensions of Laplace operator is the weighted Laplace operator which is defined as ${\Delta _f} = \Delta  - \nabla f.\nabla $. This operator plays the same role as Laplace operator in weighted manifolds, i.e. manifolds with density ${e^{ - f}}dvo{l_g}$. Many results in comparison geometry for Laplace operator have been extended to this operator and these weighted manifolds, for example refer to \cite{brighton2013liouville,daicomparison,fang2009two,jaramillo2015fundamental,lott2003some,
wei2007comparison,wylie2017warped}. In these manifolds, the tensor $Ri{c_f} = Ric + Hessf$ plays the same role as the Ricci tensor on Riemannian manifolds. For these manifolds Wylie extended the notion of sectional curvature and got some valuable results \cite{kennard2014positive,wylie2015sectional}. The p-Laplace operator ${\Delta _p}u = div\left( {{{\left| {\nabla u} \right|}^{p - 2}}\nabla u} \right)$ is another extension of the Laplace operator and has a rich study in comparison geometry \cite{wang2012eigenvalue,wang2018gradient,wang2016lower}.
\par
Another extension of Laplace operator is the elliptic divergence type operator \linebreak ${L_A}u: = div(A\nabla u)$, where $A$ is a positive definite symmetric $(1,1)$-tensor field on a complete Riemannian manifold. A natural and major question is how to extend the results of Laplace operator and Ricci tensor to this operator. Also how can we improve the comparison results by this operator. To answer this question partially, we use a class of these operators which is called Codazzi self-adjoint operator. Codazzi operators themselves are important operators and study of them are important in their own right and have major roles in some contexts. For example they occur in natural way in Riemannian manifolds with harmonic
curvature or with harmonic Weyl tensor and many known results on such manifolds are easily obtained by properties of this tensor, fore more detail see \cite{besse2007einstein}.\\
We provide comparison results as Laplace operator and Ricci tensor to the operator $ L_A $ and an extension of Ricci tensor which is defined as $\left( {X,Y} \right) \mapsto Ric\left( {X,AY} \right)$. The results are the majors in comparison geometry such Meyer's theorem, extension of Bishop-Gromov volume comparison theorem and its results such as theorem of Yau and Calabi \cite{yau1976some} for the growth of the volume, extensions of Cheeger-Gromoll splitting theorem and its consequences in topology. Finally we get an upper estimate of the ends of manifold as same as the corresponding results for Riemannian or weighted Riemannian manifolds. As we will see in Section \ref{sec10}
 if we choose a suitable Codazzi tensor we can get an extension of the Ricci tensor which is greater the Ricci tensor, so it seems that the study of the geometry of the manifold by this tensor is more effective than the Ricci tensor also  we can study the influence of eigenvalue and eigenspaces of the operator $ A $ on the topological and geometrical properties of the manifold by the extended Ricci tensor. Also it seems, this method provide an approach to extend the results for more general cases. ( In-fact by this approach it depends on the algebraic and analytic properties of the (2,1)-tensor field  $ T^A $ which is defined in \ref{torsion}.)
 \begin{note}\label{note1}Through out the paper $ M $ is complete n-dimensional Riemannian manifold and $ A $ is positive definite self-adjoint (1,1)- Codazzi tensor field on $ M $ and bounded in the sense of Definition \ref{minmax}.
\end{note}
Explicitly our results are as follows. As usual in comparison geometry, we prove the extended mean curvature comparison theorem. We get two kinds of mean curvature comparison. The first is a comparison result about differential operator ${\Delta _{A,f}}$ and we use it to get compactness result and an extension of Cheeger-Gromoll splitting theorem. The second is about differential operator ${\L _{A,f}}$ which is used to prove the extended volume comparison theorem.
\begin{theorem}[\textbf{Extended mean curvature comparison}]\label{Meancurvaturecomparison}
Let $x_0$ a fixed point in $ M $ and $r(x) = dist(x_0,x)$ and $ H $ is some constant.
\begin{itemize}
\item[a)]
If  $\left( {n - 1} \right){\delta _n}H{\left| X \right|^2} \le Ric(X,AX)$ and $\left| {{f^A}} \right| \le K$  where $ K $ is some constant ( for $ H>0 $ assume $r \le \frac{\pi }{{4\sqrt H }}$), then along any minimal geodesic segment from $x_0$ we have,
\[{\Delta _{A,{f^A}}}r \le {\delta _n}\left( {1 + \frac{{4K}}{{{\delta _n}\left( {n - 1} \right)}}} \right){\Delta _H}r.\]
\item[b)]
If $\left( {n - 1} \right){\delta _n}H \le Ri{c_{Trace(A)}}({\partial _r},A{\partial _r})$ , $\left| {{f^A}} \right| \le K$  and $\left| {Trace(A)} \right| \le K'$ where $ K $ and ${K'}$ are some constants ( for $ H>0 $ assume $r \le \frac{\pi }{{4\sqrt H }}$), then along any minimal geodesic segment from $x_0$ we have,
\[{L_A}r \le {\delta _n}\left( {1 + \frac{{4\left( {K + K'} \right)}}{{{\delta _n}\left( {n - 1} \right)}}} \right)\left( {{\Delta _H}r} \right) + {\partial _r}.{f^A}(r),\]
\end{itemize}
where the notations ${L_{A,f}}$, ${\Delta _{A,f}}$, $Ri{c_{Trace(A)}}({\partial _r},A{\partial _r})$and ${\delta _n}$ are defined in \ref{ricci} and \ref{minmax} Also $ f^A $ is an estimates of a contraction of tensor field ${T^{\nabla A}}$ and defined in Definition \ref{Bakryidea} .
\end{theorem}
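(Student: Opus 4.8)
The plan is to reduce both inequalities to a one–dimensional Riccati comparison along a fixed unit–speed minimal geodesic $\gamma$ issuing from $x_0$, on the portion where $r$ is smooth (before the cut locus), exactly as in the classical and weighted ($Ric_f$) mean curvature comparisons, but with every contraction taken against $A$. Writing $S=Hess\, r$ and letting $\mathcal{R}(X)=R(X,\partial_r)\partial_r$ be the Jacobi operator along $\gamma$, I would begin from the radial Riccati equation $\nabla_{\partial_r}S+S^2+\mathcal{R}=0$, contract it with $A$, and differentiate the twisted mean curvature $tr(AS)$ in the $\partial_r$ direction. Here the Codazzi hypothesis on $A$ is essential: it yields the identity $div\,A=\nabla(Trace\,A)$, and, more importantly, it lets me rewrite the unwanted first–order term $tr((\nabla_{\partial_r}A)S)$ as a contraction of $T^{\nabla A}$, which by Definition \ref{Bakryidea} is estimated by $f^A$. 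This is precisely the mechanism that turns $L_A r$ and $\Delta_{A,f^A}r$ into scalar quantities obeying a differential inequality, and it is what produces the extra summand $\partial_r.f^A$ and the $Trace(A)$ contribution that distinguish part (b) from part (a).

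Next I would establish a quadratic lower bound for $tr(AS^2)$ in terms of the twisted mean curvature. Because $A$ and $S=Hess\, r$ need not commute, the naive Cauchy–Schwarz estimate $(tr\,S)^2\le (n-1)\,tr(S^2)$ must be replaced by its $A$–weighted analogue, and it is at this point that the eigenvalue constant $\delta_n$ of Definition \ref{minmax} enters, giving an inequality of the shape $tr(AS^2)\ge \frac{(tr(AS))^2}{\delta_n(n-1)}$ up to a controllable radial correction. Inserting the curvature hypothesis $Ric(X,AX)\ge(n-1)\delta_n H|X|^2$ (respectively the $Ric_{Trace(A)}(\partial_r,A\partial_r)$ bound for part (b)) for the term $tr(A\mathcal{R})$, one arrives at a Riccati inequality of the form $y'+\frac{y^2}{\delta_n(n-1)}\le -(n-1)\delta_n H+(\text{error bounded by }K,\,K')$, where $y$ is the relevant twisted mean curvature.

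Finally I would integrate this inequality by comparison with the model. Multiplying by the appropriate power of the model function $s_H$ (the solution of $s_H''+Hs_H=0$, $s_H(0)=0$, $s_H'(0)=1$) and integrating from $0$, the curvature term reproduces $\delta_n\Delta_H r$, while the error term, after using $|f^A|\le K$ (and $|Trace(A)|\le K'$ in part (b)) together with the restriction $r\le\frac{\pi}{4\sqrt H}$ for $H>0$ to keep the integrals of $s_H$ positive and comparable, contributes the multiplicative factor $(1+\frac{4K}{\delta_n(n-1)})$ (respectively with $K+K'$). The hypothesis $r\le\frac{\pi}{4\sqrt H}$ is used exactly here, to bound $\int_0^r s_H$ against $s_H(r)$ with the constant $4$. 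I expect the main obstacle to be the non-commutativity of $A$ and $Hess\, r$ in the quadratic step: controlling $tr(AS^2)$ from below by the square of the twisted trace, while simultaneously absorbing the off–diagonal $\nabla A$ terms into $f^A$ and keeping the constants sharp enough to reproduce both $\delta_n$ and the factor $4$, is the delicate heart of the argument.
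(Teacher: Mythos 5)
Your proposal follows essentially the same route as the paper: the paper's Theorem \ref{bochner3} is exactly your $A$-contracted radial Riccati identity, the bound $Trace\left( {A \circ hes{s^2}\left( r \right)} \right) \ge {{{({\Delta _A}r)}^2}}/{((n-1)\delta_n)}$ is your weighted Cauchy--Schwarz step, and the integration against $sn_H^2$ with two integrations by parts and $(sn_H^2)''\ge 0$ on $[0,\pi/(4\sqrt H)]$ is precisely how the factor $1+\frac{4K}{\delta_n(n-1)}$ (resp. with $K+K'$ and $div A=\nabla Trace(A)$ in part (b)) arises. The one point your sketch glosses over is that the direct contraction of the Riccati equation produces the curvature term $Ric_A(\partial_r,\partial_r)=\sum_i\left\langle R(\partial_r,Ae_i)e_i,\partial_r\right\rangle$ and the error term $Trace\left((\nabla_{\partial_r}A)\circ Hess\, r\right)$, and converting this pair into $Ric(\partial_r,A\partial_r)$ together with the contraction $\sum_i\left\langle T^{(\nabla_{\partial_r}A)}(e_i,\partial_r),e_i\right\rangle$ that $Hess f^A$ is defined to dominate is exactly the content of the paper's Lemma \ref{laplaceA} and Proposition \ref{lcomplicated} --- this is where the Codazzi hypothesis actually does its work, and your "rewriting" step would need to reproduce that computation.
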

As a result of the extended mean curvature comparison for the operator ${\Delta _{A,f}}$ and with an addaptation of \cite{petersen1998integral,wei2007comparison,wu2018myers} we prove a compactness result by means of excess functions as follows.
\begin{theorem}[\textbf{Meyer's theorem}]\label{Meyer's theorem}
If $Ric(X,AX) \ge \left( {n - 1} \right){\delta _n}H{\left| X \right|^2}$ for some constant $ H>0 $ and $\left| {{f^A}} \right| \le \,K$ Then
\begin{itemize}
\item[a)]$M$ is compact and $diam(M) \le \frac{\pi }{{\sqrt H }} + \frac{{4K}}{{{\delta _n}(n - 1)\sqrt H }},$
\item[b)] $M$ has finite fundamental group.
\end{itemize}
\end{theorem}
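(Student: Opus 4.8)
The plan is to derive both statements from the extended mean curvature comparison of Theorem \ref{Meancurvaturecomparison}(a) applied to the drift operator $\Delta_{A,f^A}$, following the excess-function strategy of \cite{petersen1998integral,wei2007comparison,wu2018myers} rather than the classical blow-up of the distance Laplacian. For part (a) I would argue by contradiction: assume there are points $p,q\in M$ with $\ell:=dist(p,q)$ strictly larger than $D:=\frac{\pi}{\sqrt H}+\frac{4K}{\delta_n(n-1)\sqrt H}$, fix a unit-speed minimal geodesic $\gamma$ from $p$ to $q$, and introduce the excess function $e(x)=r_p(x)+r_q(x)-\ell$, where $r_p=dist(p,\cdot)$ and $r_q=dist(q,\cdot)$. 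This function is nonnegative, vanishes identically along $\gamma$, and by Calabi's upper-support-function device the comparison of Theorem \ref{Meancurvaturecomparison}(a) holds for $r_p$ and $r_q$ in the barrier sense, so that $\Delta_{A,f^A}e \le \Delta_{A,f^A}r_p+\Delta_{A,f^A}r_q$ is bounded from above by $C(n-1)\sqrt H\big(\cot(\sqrt H r_p)+\cot(\sqrt H r_q)\big)$, with $C=\delta_n\big(1+\frac{4K}{\delta_n(n-1)}\big)$.

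Next I would localize the estimate. Since Theorem \ref{Meancurvaturecomparison}(a) is only available on the quarter period $r\le \frac{\pi}{4\sqrt H}$, I would apply it near the two endpoints, on the initial and terminal segments of $\gamma$ where $r_p$ (respectively $r_q$) stays inside that range, and control the contribution of the drift bound $|f^A|\le K$ by integrating the underlying Riccati-type inequality against the square $\sin^2(\sqrt H\,\cdot)$ of the model comparison function. The accumulated weight of the potential over a quarter period is what converts the $L^\infty$ bound $|f^A|\le K$ into the additive term $\frac{4K}{\delta_n(n-1)\sqrt H}$; comparing the resulting inequality with the length of $\gamma$ then forces $e$ to be strictly positive at an interior point of $\gamma$, contradicting $e\equiv 0$ on $\gamma$. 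Hence $\ell\le D$ for all $p,q$, so $M$ is bounded; completeness together with the Hopf--Rinow theorem gives compactness and $diam(M)\le D$, which is statement (a).

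The main obstacle is precisely this range restriction: because the extended comparison degenerates at $\frac{\pi}{4\sqrt H}$ rather than at the conjugate radius $\frac{\pi}{\sqrt H}$, the naive Myers argument (letting $r\to \frac{\pi}{\sqrt H}$ so that $\Delta_H r\to -\infty$) is unavailable, and one must extract the contradiction from the excess function while bookkeeping the errors produced both by $A$ and by $f^A$. Obtaining the sharp constant $\frac{4K}{\delta_n(n-1)\sqrt H}$ in (a) is the delicate computational point, and I expect it to hinge on the sharp quarter-period estimate already used in the proof of Theorem \ref{Meancurvaturecomparison}.

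For part (b) I would pass to the universal Riemannian cover $\pi\colon\widetilde M\to M$ with the pulled-back metric and the pulled-back tensor $\widetilde A=\pi^*A$. All hypotheses are local and therefore survive the lift: $\widetilde A$ is again positive definite, self-adjoint, Codazzi, and bounded with the same constant $\delta_n$, while the local isometry $\pi$ preserves $Ric(X,AX)\ge (n-1)\delta_n H|X|^2$ and the bound $|f^A|\le K$ on the relevant contraction of $T^{\nabla A}$. Thus $\widetilde M$ satisfies the hypotheses of part (a) and is compact, so the covering $\pi$ has finite fibre; since the fibre is in bijection with $\pi_1(M)$, the fundamental group of $M$ is finite.
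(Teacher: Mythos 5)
Your overall strategy --- the excess function $e_{p,q}$, the extended mean curvature comparison for $\Delta_{A,f^A}$ in the barrier sense, and passage to the universal cover with the lifted tensor for part (b) --- is exactly the paper's, and your part (b) matches the paper's argument essentially verbatim. The gap is in how you propose to close part (a). You correctly identify that Theorem \ref{Meancurvaturecomparison}(a) is only available for $r\le\frac{\pi}{4\sqrt H}$, but your remedy --- applying it ``near the two endpoints, on the initial and terminal segments of $\gamma$ where $r_p$ (respectively $r_q$) stays inside that range'' --- cannot work as stated: to exploit $\Delta_{A,f^A}e_{p,q}\ge 0$ you must bound $\Delta_{A,f^A}r_p$ and $\Delta_{A,f^A}r_q$ \emph{at the same point} of $\gamma$, and when $\ell>\frac{\pi}{\sqrt H}$ there is no point of $\gamma$ at which both $r_p$ and $r_q$ lie in the quarter-period range. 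Also, the contradiction is not that ``$e$ becomes strictly positive at an interior point of $\gamma$''; it is that $\Delta_{A,f^A}e\ge 0$ at a zero minimum of $e$ clashes with a strictly negative upper bound.

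The paper supplies the two ingredients your outline defers. First, Remark \ref{largercomparison} extends the comparison to $\frac{\pi}{4\sqrt H}\le r\le\frac{\pi}{2\sqrt H}$ by splitting $\int_0^r (sn_H^2)''f^A\,dt$ at $\frac{\pi}{4\sqrt H}$, where $(sn_H^2)''$ changes sign; evaluated exactly at $r=\frac{\pi}{2\sqrt H}$, where $\Delta_H r=0$, this yields the clean bound $\Delta_{A,f^A}r_i(y_i)\le 2K\sqrt H$ of (\ref{meyer1}) at the points $y_1=\gamma\left(\frac{\pi}{2\sqrt H}\right)$ and $y_2=\gamma\left(B+\frac{\pi}{2\sqrt H}\right)$, for which $r_1(y_1)=r_2(y_2)=\frac{\pi}{2\sqrt H}$. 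Second, the Riccati inequality (\ref{bocnerin}) together with $Ric(\partial_r,A\partial_r)\ge(n-1)\delta_n H$ propagates the bound on $\Delta_{A,f^A}r_1$ from $y_1$ to $y_2$ at the cost of $-B(n-1)\delta_n H$, where $B=dist(p,q)-\frac{\pi}{\sqrt H}$; this is the step at which the excess of the length over $\frac{\pi}{\sqrt H}$ actually enters. Adding the two estimates at $y_2$ gives $0\le 4K\sqrt H - B(n-1)\delta_n H$, hence the stated diameter bound. Your proposal gestures at both steps (the $\sin^2(\sqrt H\,\cdot)$-weighted integration and the ``bookkeeping'' of errors) but carries out neither, and without them the additive constant $\frac{4K}{\delta_n(n-1)\sqrt H}$ does not emerge.
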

The volume comparison theorem is one of the most important theorems in differential geometry and has many important applications, we extend the volume comparison theorem as follows.
\begin{theorem}\label{extended volume comparison}
Let $ M $ be a Riemannian manifold, $ x_0 $ be e fixed point and $ r(x):=dist(x_0,x) $, $ A $ be a self adjoint (1,1)-tensor field on it and $ R_T $ be some constant. Assume the following conditions are satisfied,
\begin{itemize}
\item[1)]For some constant $ H $ we have $Ri{c_{Trace(A)}}(X,AX) \ge \left( {n - 1} \right){\delta _n}H{\left| X \right|^2}$ ( If $ H>0 $, assume ${R_T} \le \frac{\pi }{{4\sqrt H }}$ ),
\item[2)]$\left| {{f^A}} \right| \le K$ and $\left| {Trace(A)} \right| \le K'.$
\end{itemize}
Then for $m = C\left( {{\delta _n},n,{\delta _1},K,K',H} \right) = \left[ {\frac{{{\delta _n}(n - 1) + 4(K + K')}}{{{\delta _1}}}} \right] + 2$, we have the following results,
\begin{itemize}
\item[a)]For any $0 < R \le {R_T}$, we get
\[{\left( {\frac{{vol^A(B({x_0},R))}}{{vol_H^mB(R)}}} \right)^{1/p}} - {\left( {\frac{{vol^A(B({x_0},R))}}{{vol_H^mB(r)}}} \right)^{1/p}} \le \frac{{{c_m}}}{{p{\delta _n}}}{\left\| {\left( {\delta _n^{1/p}\left( {\left| {\nabla {f^A}} \right|} \right)} \right)} \right\|_{p,R}}\int_r^R {\frac{{tsn_H^{m - 1}(t)}}{{{{\left( {vol_H^mB(t)} \right)}^{1 + 1/p}}}}dt.} \]
\item[b)]For any $0 < {r_1} \le {r_2} \le {R_1} \le {R_2} \le {R_T}$, we have the following extended volume comparison result for annular regions,
\[\begin{array}{l}
 {\left( {\frac{{vol^A(B({x_0},{r_2},{R_2}))}}{{vol_H^mB({r_2},{R_2})}}} \right)^{1/p}} - {\left( {\frac{{vol^A(B({x_0},{r_1},{R_1}))}}{{vol_H^mB({r_1},{R_1})}}} \right)^{1/p}} \\
 \,\,\,\,\,\,\,\,\,\,\,\,\,\,\,\,\,\,\,\,\,\,\,\,\,\,\,\,\,\,\,\,\,\, \le \frac{{{c_m}}}{{p{\delta _n}}}{\left\| {\left( {\delta _n^{1/p}\left( {\left| {\nabla {f^A}} \right|} \right)} \right)} \right\|_{p,R}} \times \left[ {\int_{{R_1}}^{{R_2}} {\frac{{tsn_H^{m - 1}(t)}}{{{{\left( {vol_H^mB({r_2},t)} \right)}^{1 + 1/p}}}}} dt + \int_{{r_1}}^{{r_2}} {\frac{{{R_1}sn_H^{m - 1}({R_1})}}{{{{\left( {vol_H^mB(t,{R_1})} \right)}^{1 + 1/p}}}}dt} } \right]. \\
 \end{array}\]
\item[c)] If in addition $ A $ satisfies the measure condition (\ref{invariantmeasure}) with radius measure ${r_0}>0$, then for every $0 < r < R < {R_T}$ there are some constants
\[\bar C\left( {{\delta _n},n,{\delta _1},K,K',H,p,{R_T}} \right)\]
and
\[\bar B\left( {{\delta _n},n,{\delta _1},K,K',H,p,{R_T}} \right)\]
such that, if
\[\varepsilon (p,A,{R_T}) = \frac{1}{{{{\left( {vol_{{h^A}}^A\left( {B({x_0},{R_T})} \right)} \right)}^{1/p}}}}{\left\| {\left( {\delta _n^{1/p}\left( {{r^{ - 1/p}}\left| {\nabla {{\bar f}^A}} \right|} \right)} \right)} \right\|_{p,{R_T}}} \le \bar B\left( {{\delta _n},n,{\delta _1},K,K',H,p,{R_T}} \right),\]
then
\[\frac{{vol_{{h^A}}^A(B({x_0},R))}}{{vol_{{h^A}}^A(B({x_0},r))}} \le {\left( {\frac{{1 - \bar C\varepsilon }}{{1 - 2\bar C\varepsilon }}} \right)^p}\frac{{vol_H^mB(R)}}{{vol_H^mB(r)}},\]
where
\[{\left\| {\left( {\delta _n^{1/p}\left( {{r^{ - 1/p}}\left| {\nabla {{\bar f}^A}} \right|} \right)} \right)} \right\|_{p,{R_T}}}: = {\left( {\int_{B({x_0},{R_T})} {\delta _n^{}{r^{ - 1}}{{\left| {\nabla {{\bar f}^A}} \right|}^p}dvo{l_g}} } \right)^{1/p}}.\]
and
\[vol_{{h^A}}^A(B({x_0},R)): = \int_{ B({x_0},R)} {\left\langle {{\partial _r},A{\partial _r}} \right\rangle {e^{ - {h^A}}}dvo{l_g}} ,\]
\end{itemize}
\end{theorem}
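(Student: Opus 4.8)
The plan is to adapt the relative volume comparison technique of Petersen--Wei to the $A$-weighted volume $vol^A$ and the operator $L_A$, taking part (b) of Theorem~\ref{Meancurvaturecomparison} as the sole analytic input. The essential feature of the setting is that the mean curvature comparison for $L_A$ holds only \emph{pointwise up to the non-sign-definite correction} $\partial_r f^A$: it is exactly this term that obstructs a clean Bishop--Gromov monotonicity and forces the conclusions to be phrased through the $L^p$-norm of $\nabla f^A$ rather than as a sharp inequality. Accordingly, throughout I treat $\partial_r f^A$ as an error to be integrated against the model weight and estimated by H\"older.

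First I would work in geodesic polar coordinates about $x_0$ and write the $A$-weighted area element as $\mathcal{A}^A(r,\theta)=\langle\partial_r,A\partial_r\rangle\,\mathcal{A}(r,\theta)$, so that $vol^A(B(x_0,R))=\int_{S^{n-1}}\int_0^R \mathcal{A}^A\,dr\,d\theta$ within the segment domain. The identity I must establish is that $\partial_r\log\mathcal{A}^A$ is controlled by $L_A r/\langle\partial_r,A\partial_r\rangle$, the radial derivative of the weight contributing a term built from $\nabla A$ that the contraction estimate $f^A$ (Definition~\ref{Bakryidea}, via the tensor of \ref{torsion}) is designed to absorb. Feeding in the comparison $L_A r\le c\,\Delta_H r+\partial_r f^A$ with $c=\delta_n(1+\tfrac{4(K+K')}{\delta_n(n-1)})$ and dividing by the weight, which is bounded below by $\delta_1$, shows $\partial_r\log\mathcal{A}^A$ is dominated by $\tfrac{c(n-1)}{\delta_1}\tfrac{sn_H'}{sn_H}$ up to the error. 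Choosing the effective dimension $m$ with $m-1\ge\frac{\delta_n(n-1)+4(K+K')}{\delta_1}$ --- precisely the definition of $m$ --- then lets me compare against the $m$-dimensional model mean curvature $(m-1)sn_H'/sn_H$ and obtain a differential inequality for $\log(\mathcal{A}^A/sn_H^{m-1})$ whose only inhomogeneous term comes from $\partial_r f^A$.

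Integrating this inequality radially and then over $S^{n-1}$ produces, for part (a), the difference of the two volume-ratio powers on the left, with right-hand side an integral of the error against $sn_H^{m-1}/(vol_H^m B)^{1+1/p}$; H\"older's inequality in the radial variable then converts $\int\partial_r f^A$ into the norm $\|\delta_n^{1/p}|\nabla f^A|\|_{p,R}$ and yields the constant $c_m/(p\delta_n)$, the factor $\delta_n$ entering when the spherical integral of the error is rewritten as a volume integral using $\langle\partial_r,A\partial_r\rangle\le\delta_n$. The cut locus is handled as usual, extending $\mathcal{A}^A$ by zero past it while preserving the relevant Jacobian monotonicity. Part (b) is the same argument run on annuli $B(x_0,r_1,R_1)$ and $B(x_0,r_2,R_2)$, where the outer and inner radii must be integrated separately, which accounts for the two integral terms. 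For part (c), the invariant-measure hypothesis (\ref{invariantmeasure}) with radius measure $r_0>0$ lets me pass from $vol^A$ to the smooth measure $vol_{h^A}^A$ and bootstrap the additive almost-monotonicity of (a) into the multiplicative ratio bound; the threshold $\bar B$ and factor $\bar C$ arise from requiring $\varepsilon(p,A,R_T)$ to be small enough that this iteration converges.

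The main obstacle will be the second step: pinning down the exact relation between $\partial_r\log\mathcal{A}^A$ and $L_A r$, since the varying weight $\langle\partial_r,A\partial_r\rangle$ contributes a term involving $\nabla A$ that must be genuinely absorbed into $f^A$ rather than merely bounded, and keeping the powers of $\delta_n$ and $\delta_1$ and the constant $c_m$ consistent with the statement. In particular one must verify that the inhomogeneous term controls the \emph{full} gradient $\nabla f^A$, and not only its radial derivative $\partial_r f^A$, so that the H\"older step legitimately produces the stated $L^p$-norm; the residual non-smoothness of $r$ at the cut locus is by comparison routine and is dealt with in the barrier/support sense.
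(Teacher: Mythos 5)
Your overall strategy (feed Theorem \ref{Meancurvaturecomparison}(b) into a relative volume comparison, choose the effective dimension $m$ so that $m-1\ge\frac{\delta_n(n-1)+4(K+K')}{\delta_1}$, and H\"older the $\partial_r f^A$ error into the stated $L^p$-norm) matches the paper's, which simply verifies that the mean curvature bound and the lower bound $\langle\nabla r,A\nabla r\rangle\ge\delta_1\ge\frac{1}{C(m-1)}$ place you in the hypotheses of Theorem \ref{extended volume1}. But the step you yourself flag as the main obstacle is a genuine gap, not a routine verification. You want a \emph{pointwise} differential inequality for $\log\mathcal{A}^A$ with $\mathcal{A}^A=\langle\partial_r,A\partial_r\rangle\,\mathcal{A}$, claiming $\partial_r\log\mathcal{A}^A$ is controlled by $L_Ar/\langle\partial_r,A\partial_r\rangle$ up to a term absorbed by $f^A$. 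This fails: one computes
\[\partial_r\mathcal{A}^A=\bigl[\langle(\nabla_{\partial_r}A)\partial_r,\partial_r\rangle+\langle\partial_r,A\partial_r\rangle\,\Delta r\bigr]\mathcal{A},\qquad L_Ar=\langle \operatorname{div}A,\partial_r\rangle+\textstyle\sum_i Hess\,r(e_i,Ae_i),\]
and the difference consists of purely tangential quantities (the tangential part of $\operatorname{div}A$ and $\langle\partial_r,A\partial_r\rangle\Delta r-Trace(A\circ Hess\,r)$) which do not vanish pointwise and are \emph{not} controlled by $f^A$ --- recall $f^A$ is defined in \ref{Bakryidea} to dominate a contraction of $\nabla^2A$ arising in the Bochner formula, and it has already been spent in proving the mean curvature comparison; it is not available to absorb first-order terms like $\langle(\nabla_{\partial_r}A)\partial_r,\partial_r\rangle$. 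These tangential terms disappear only after integration over the closed geodesic sphere, since $L_Ar=\operatorname{div}(A\nabla r)$ gives $\frac{d}{dR}\,vol^A(\partial B(x_0,R))=\int_{\partial B(x_0,R)}L_Ar$ by the divergence theorem. This is precisely why the paper does not argue with the polar-coordinate Jacobian at all: it invokes Lemma \ref{weakinequalitydis} to upgrade the pointwise bound on $L_Ar$ to a distributional one and tests it against the radial cut-offs $\varphi_\varepsilon=\rho_\varepsilon(r)\,sn_H^{1-m}(r)$, which performs the sphere integration and handles the cut locus in one stroke. Your argument becomes correct if you replace the pointwise Jacobian comparison by this sphere-integrated (or weak) formulation.

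Two smaller points. First, in part (c) you describe the passage from $vol^A$ to $vol^A_{h^A}$ only as a ``bootstrap''; the actual mechanism is the identity $L_{A,h^A}r=L_Ar-\langle\nabla h^A,A\nabla r\rangle$ together with $\nabla h^A=A^{-1}\nabla f^A$ on $B(x_0,r_0)$, which replaces the error $\nabla f^A$ by $\nabla\bar f^A$ (vanishing near $x_0$), and the extra weight $r^{-1/p}$ in the H\"older step is what keeps the model integral $\int_0^R\bigl(r/vol_H^mB(r)\bigr)^{1+1/p}sn_H^{m-1}(r)\,dr$ finite for $p>m$ (cf. Remark \ref{importantvolumerem}); without the measure condition the integral in part (a) blows up as $r\to0$ and no ratio bound follows. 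Second, the H\"older step bounds the full $|\nabla f^A|$ because one estimates $\partial_r f^A\le|\nabla f^A|$ before applying H\"older, so your worry on that point is easily dispatched, but the first issue above must be repaired for the proof to stand.
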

where $B(x_0,r)$ is the geodesic ball with center $x_0$ and radius $r$ and $B(x_0,{R_1},R) = B(x_0,R)\backslash B(x_0,{R_1})$ and the notations ${\delta _n},{\delta _1}$ and $vol_{{h^A}}^A$ are defined in \ref{minmax} and \ref{Dev}. Also ${{f^A}}$ is defined in Definition \ref{Bakryidea} and $Ri{c_{Trace(A)}}$ is defined as $Ri{c_{Trace(A)}}: = Ric - Hess(Trace(A))$.

One of the major and beautiful consequences of mean curvature comparison theorem, is the Cheeger-Gromoll splitting theorem, which we extend it as follows.
\begin{theorem}[\textbf{Extended Cheeger-Gromoll Splitting theorem}] \label{Cheeger} If $M$ contains a line and  $Ric(Y,AY) \ge 0$ for any vector field $ Y $,  by defining $N = {\left( {{b^ + }} \right)^{ - 1}}\left( 0 \right)$ and  ${A_N} = pro{j_{{{\left( {\nabla {b^ + }(0)} \right)}^ \bot }}} \circ A \circ pro{j_{{{\left( {\nabla {b^ + }(0)} \right)}^ \bot }}}$, we have $M = {N^{n - 1}} \times\mathbb{R} $ and $Ric(X,{A_N}X) \ge 0$, where  ${b^ + }$ is the Bussemann function associated to the ray ${\gamma _ + }(t)$ and $X \in \Gamma \left( {TN} \right)$.
\end{theorem}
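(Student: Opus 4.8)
The plan is to run the classical Cheeger--Gromoll argument with the Laplacian and Ricci tensor replaced by the weighted operator $\Delta_{A,f^A}$ and the extended tensor $Ric(\cdot,A\cdot)$, using the Codazzi hypothesis on $A$ at the rigidity step. Writing the given line as two rays $\gamma_\pm$, I would first introduce the associated Busemann functions $b^\pm(x)=\lim_{t\to\infty}\bigl(t-d(x,\gamma_\pm(t))\bigr)$, which are $1$-Lipschitz (so $|\nabla b^\pm|\le 1$ where differentiable) and satisfy, by the triangle inequality along the minimizing line, $b^++b^-\le 0$ with equality on $\gamma$.

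Next I would feed the hypothesis $Ric(Y,AY)\ge 0$ into part (a) of Theorem~\ref{Meancurvaturecomparison} with $H=0$ (so the curvature assumption there becomes exactly $Ric(X,AX)\ge 0$). Since $A$ is bounded (Note~\ref{note1}) we have the requisite bound $|f^A|\le K$, so along any minimal geodesic $\Delta_{A,f^A}r_t^\pm\le C\,(n-1)/r_t^\pm\to 0$ as $t\to\infty$. Passing to the limit in the barrier (support) sense shows each $b^\pm$ is $\Delta_{A,f^A}$-subharmonic, hence $F:=b^++b^-$ is a subharmonic function for the elliptic operator $\Delta_{A,f^A}$ (elliptic because $A$ is positive definite) with $F\le 0$ and $F\equiv 0$ on $\gamma$. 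The strong maximum principle applied at the interior maximum then forces $F\equiv 0$; consequently $b^+=-b^-$, and each $b^\pm$ is simultaneously sub- and super-harmonic, i.e.\ an honest solution of $\Delta_{A,f^A}b^+=0$, which elliptic regularity upgrades to a smooth function.

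The heart of the matter is the rigidity step, which I expect to be the main obstacle. Here I would establish a Bochner-type identity for $\Delta_{A,f^A}$ (equivalently for $L_A$ corrected by the drift coming from $f^A$) applied to the solution $b^+$: schematically, $\frac{1}{2}\Delta_{A,f^A}|\nabla b^+|^2$ should decompose into a nonnegative ``$A$-Hessian'' square, the term $Ric(\nabla b^+,A\nabla b^+)\ge 0$, and remainder terms involving $\nabla A$. The Codazzi property of $A$ is exactly what is needed to make the contraction of $\nabla A$ symmetric, so that these remainder terms reorganize into the drift governed by $f^A$ and no indefinite piece survives. Integrating this identity against the natural weight and using $\Delta_{A,f^A}b^+=0$ should force $|\nabla b^+|\equiv 1$ together with the vanishing of the Hessian square, so that $V:=\nabla b^+$ is a parallel unit vector field. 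Verifying that the Codazzi identity produces an exact cancellation, and controlling $V.f^A$ along the flow, is the delicate point.

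Finally, a parallel unit field $V$ gives the isometric de~Rham splitting $M=N^{n-1}\times\mathbb{R}$ with $N=(b^+)^{-1}(0)$ and $\mathbb{R}$-factor the flow lines of $V$. To obtain the induced inequality I would use that for a Riemannian product $Ric_M$ vanishes on mixed pairs and restricts to $Ric_N$ on $TN$; for $X\in\Gamma(TN)$ one has $A_NX=proj(AX)$, and since $Ric_M(X,V)=0$ the component of $AX$ along $V$ contributes nothing, whence $Ric_N(X,A_NX)=Ric_M(X,AX)\ge 0$. One also checks that $A_N$ inherits self-adjointness and positive definiteness from $A$, so that the conclusion on the factor $N$ is geometrically meaningful.
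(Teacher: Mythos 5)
Your overall skeleton is the one the paper follows: subharmonicity of the Busemann functions via Theorem \ref{Meancurvaturecomparison}(a) with $H=0$, the maximum principle (Theorem \ref{Maximum principle}) applied to $b^++b^-$, elliptic regularity (Proposition \ref{regular}), a Bochner rigidity step, and the splitting by the parallel unit field $\nabla b^+$. Your closing computation $Ric_N(X,A_NX)=Ric_M(X,AX)\ge 0$ also matches what the paper leaves to the reader.

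The genuine gap is in your rigidity step: you propose to obtain $|\nabla b^+|\equiv 1$ by \emph{integrating} the Bochner identity ``against the natural weight.'' On a complete noncompact manifold no such integration is available (there is no compactness and no a priori decay to kill boundary terms), and in any case $|\nabla b^+|=1$ is not a consequence of the Bochner identity --- it is an input to it. The correct order, and the one the paper uses (Corollary \ref{unitbgr}), is: once $b^+$ is known to be smooth (Corollary \ref{busemansmooth}), the support functions $h_t(x)=t-d(x,\bar\gamma(t))+b^+(q)$ touch $b^+$ from below at $q$ with $|\nabla h_t(q)|=1$, which forces $\nabla b^+(q)=\nabla h_t(q)$ and hence $|\nabla b^+|\equiv 1$ pointwise. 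Only then is the Bochner formula of Theorem \ref{bochner3} applied \emph{pointwise}: $|\nabla b^+|\equiv 1$ kills $L_A(|\nabla b^+|^2)$ and the $\nabla|\nabla b^+|^2$ term; $\Delta_{A,f^A}b^+=0$ converts $\nabla b^+.(\Delta_A b^+)$ into $Hess\,f^A(\nabla b^+,\nabla b^+)$; and Definition \ref{Bakryidea} guarantees $Hess\,f^A(\nabla b^+,\nabla b^+)-\sum_i\langle T^{(\nabla_{\nabla b^+}A)}(\nabla b^+,e_i),e_i\rangle\ge 0$, leaving $0\ge Trace(A\circ hess^2(b^+))+Ric(\nabla b^+,A\nabla b^+)$ and hence $hess(b^+)\equiv 0$ by positive definiteness of $A$. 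Note also that the Codazzi hypothesis enters not to ``symmetrize the contraction of $\nabla A$'' in the remainder, but to give $divA=\nabla Trace(A)$ so that the terms $-\nabla b^+.\nabla b^+.Trace(A)+\langle\nabla_{\nabla b^+}div(A),\nabla b^+\rangle$ cancel exactly; the leftover $T$-term is absorbed by $Hess\,f^A$ by the very construction of $f^A$, not by Codazzi symmetry. With the order of these steps corrected, your argument coincides with the paper's.
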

The number of ends of a manifold is one of major concepts in topology and finding an upper bound for the number of ends is an important problem. We get an explicit upper bound for the number of ends of the manifold, when the tensor is nonnegative outside of a compact set.
\begin{theorem}\label{endofmanifold} Let $x_0 \in M$ be a fixed point and $ H,R>0 $ be two constants. \linebreak Assume $Ri{c_{Trace(A)}}(X,AX) \ge  - (n - 1)H{\delta _n}{\left| X \right|^2}$ in the geodesic ball $B(x_0,R)$ \linebreak and $Ri{c_{Trace(A)}}(X,AX) \ge 0$ outside the ball, also
\[\varepsilon (p,A,{R_{25R/2}}) \le \bar B\left( {{\delta _n},n,{\delta _1},K,K',H,p,{R_{25R/2}}} \right),\]
then the number of ends $ N(A,M,R) $ of the manifold $ M $ is estimated as
\[N(A,M,R) \le {\left( {\frac{{1 - \bar C\varepsilon }}{{1 - 2\bar C\varepsilon }}} \right)^p}\frac{{2m}}{{m - 1}}{\left( {\sqrt H R} \right)^{ - m}}\exp \left( {\frac{{17\left( {m - 1} \right)}}{2}\sqrt H R} \right),\]
where $p > m$ and
\begin{eqnarray*}
m:&=&  C\left( {{\delta _n},n,{\delta _1},K,K',H} \right) = \left[ {\frac{{{\delta _n}(n - 1) + 4(K + K')}}{{{\delta _1}}}} \right] + 2 \\
K: &=& \mathop {\sup }\limits_{x \in B({x_0},25R/2)} \left| {{f^A}\left( x \right)} \right|, \\
K':&=&  \mathop {\sup }\limits_{x \in B({x_0},25R/2)} \left| {Trace(A)\left( x \right)} \right|, \\
\bar C:&=&  \bar C\left( {{\delta _n},n,{\delta _1},K,K',H,p,{R_{25R/2}}} \right).
\end{eqnarray*}
\end{theorem}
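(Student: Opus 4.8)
The plan is to adapt Gromov's volume-ratio method for counting ends to the weighted volume of Theorem~\ref{extended volume comparison}(c). Write $N=N(A,M,R)$ for the number of unbounded connected components (ends) $E_1,\dots,E_N$ of $M\setminus\bar B(x_0,R)$. First observe that the two curvature hypotheses combine into the single global bound $Ric_{Trace(A)}(X,AX)\ge -(n-1)H\delta_n|X|^2$ valid on all of $M$, since outside $B(x_0,R)$ one has $0\ge -(n-1)H\delta_n|X|^2$. This is exactly the hypothesis that drives the comparison against the model of constant curvature $-H$, for which $sn_H(t)=\sinh(\sqrt H\,t)/\sqrt H$. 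In each end I select a point $x_i\in E_i$ with $d(x_0,x_i)=\tfrac{15}{4}R$; such a point exists because $E_i$ is connected and unbounded, hence meets the sphere $\partial B(x_0,\tfrac{15}{4}R)$.

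Next I set up disjoint balls. If $i\ne j$ then $x_i$ and $x_j$ lie in different components of $M\setminus\bar B(x_0,R)$, so any path between them meets $\bar B(x_0,R)$; comparing lengths at a crossing point gives $d(x_i,x_j)\ge 2\bigl(\tfrac{15}{4}R-R\bigr)=\tfrac{11}{2}R>2R$. Hence the balls $B(x_i,R)$ are pairwise disjoint, each is contained in $B(x_0,\tfrac{19}{4}R)$, and $B(x_0,\tfrac{19}{4}R)\subseteq B(x_{i^\ast},\tfrac{17}{2}R)$ whenever $d(x_0,x_{i^\ast})=\tfrac{15}{4}R$. Working with the center-free measure $d\mu=e^{-h^A}\,dvol_g$, additivity over the disjoint balls and monotonicity under inclusion yield, with $i^\ast$ realizing the minimum,
\[ N\,\mu\bigl(B(x_{i^\ast},R)\bigr)\le \sum_{i}\mu\bigl(B(x_i,R)\bigr)\le \mu\bigl(B(x_0,\tfrac{19}{4}R)\bigr)\le \mu\bigl(B(x_{i^\ast},\tfrac{17}{2}R)\bigr). \]
Since the radial weight satisfies $\delta_1\le\langle\partial_r,A\partial_r\rangle\le\delta_n$ for any center, the measures $\mu$ and $vol_{h^A}^A$ are comparable up to a factor depending only on $\delta_1,\delta_n$, so it suffices to bound the ratio $vol_{h^A}^A(B(x_{i^\ast},\tfrac{17}{2}R))/vol_{h^A}^A(B(x_{i^\ast},R))$.

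The decisive step is to apply Theorem~\ref{extended volume comparison}(c) with base point $x_{i^\ast}$, inner radius $R$ and outer radius $\tfrac{17}{2}R$. This is legitimate because the global curvature bound is center independent, and because $B(x_{i^\ast},\tfrac{17}{2}R)$ reaches distance at most $\tfrac{15}{4}R+\tfrac{17}{2}R=\tfrac{49}{4}R<\tfrac{25}{2}R$ from $x_0$; thus it lies inside $B(x_0,\tfrac{25}{2}R)$, on which $K$, $K'$, $\bar B$, $\bar C$ and the smallness quantity $\varepsilon$ are defined as suprema, making all hypotheses hold uniformly for the re-centered ball. The comparison then gives, up to the absorbed $\delta_1,\delta_n$ factor,
\[ N\le \left(\frac{1-\bar C\varepsilon}{1-2\bar C\varepsilon}\right)^{p}\frac{vol_H^m B(\tfrac{17}{2}R)}{vol_H^m B(R)}. \]
It remains to estimate the model ratio: using $\sinh(\sqrt H t)\le\tfrac12 e^{\sqrt H t}$ in the numerator and $\sinh(\sqrt H t)\ge\sqrt H t$ in the denominator bounds $vol_H^m B(\tfrac{17}{2}R)$ by a multiple of $(\sqrt H)^{-m}(m-1)^{-1}\exp\bigl(\tfrac{17(m-1)}{2}\sqrt H R\bigr)$ and $vol_H^m B(R)$ below by $c_m R^m/m$, whose quotient is at most $\tfrac{2m}{m-1}(\sqrt H R)^{-m}\exp\bigl(\tfrac{17(m-1)}{2}\sqrt H R\bigr)$, the asserted estimate.

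I expect the principal obstacle to be the re-centering in the decisive step together with the center-dependence of $vol_{h^A}^A$: the radial weight $\langle\partial_r,A\partial_r\rangle$ and the functions $f^A$, $\bar f^A$, $h^A$ are all tied to the distance function from the base point, so one must check that the hypotheses of Theorem~\ref{extended volume comparison}(c) transfer from $x_0$ to each off-center point $x_{i^\ast}$. This is precisely why the enlarged ball $B(x_0,\tfrac{25}{2}R)$ enters the statement: taking $K$, $K'$, $\varepsilon$, $\bar B$, $\bar C$ as suprema over it makes the constants simultaneously valid at $x_0$ and at every $x_{i^\ast}$ produced above, while the two-sided eigenvalue bound $\delta_1|X|^2\le\langle X,AX\rangle\le\delta_n|X|^2$ keeps the change of radial weight under control and lets the resulting distortion be absorbed into $\bar C$.
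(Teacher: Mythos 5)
Your volume-doubling machinery (disjoint balls inside a big ball, re-centering the weighted volume at an off-center point, absorbing the constants via suprema over $B(x_0,\tfrac{25}{2}R)$, and the final $\sinh$-ratio estimate) matches the paper's computation closely, and the arithmetic of your separation and inclusion radii is sound. But there is a genuine gap at the very first step: you bound the number of unbounded components of $M\setminus\bar B(x_0,R)$, which is in general only a \emph{lower} bound for the number of ends of $M$, not an upper bound. An end is an equivalence class of co-final rays; two distinct ends may perfectly well lie in the same unbounded component of $M\setminus\bar B(x_0,R)$ and only separate at a much larger radius, so your points $x_i$ do not account for all ends. Worse, by merging the two curvature hypotheses into the single global bound $Ric_{Trace(A)}(X,AX)\ge-(n-1)H\delta_n|X|^2$ you discard exactly the structural input that rules this out: a manifold satisfying only a global lower Ricci-type bound can have infinitely many ends, with new ends appearing at arbitrarily large radii, yet it would satisfy all the hypotheses you actually use. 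So the argument as written cannot prove the stated conclusion.

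What is missing is the Cai-type separation step, which is where the paper spends its effort and where the hypothesis $Ric_{Trace(A)}(X,AX)\ge 0$ \emph{outside} $B(x_0,R)$ is used in an essential, non-volumetric way. The paper first proves two lemmas about lines and asymptotic rays (via Busemann functions and the extended splitting/maximum-principle machinery of Section 7) and deduces Proposition \ref{en1}: if $[\gamma_1]$ and $[\gamma_2]$ are \emph{distinct ends}, then $d(\gamma_1(4R),\gamma_2(4R))>2R$. Only this guarantees that every end contributes its own point on $\partial B(x_0,4R)$ with pairwise distances exceeding $2R$, so that the number of ends is at most the maximal number $L$ of disjoint balls $B(p_j,\tfrac{R}{2})$ centered on that sphere; the volume comparison then bounds $L$. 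To repair your proof you would need to prove (or invoke) this separation statement rather than the purely metric component-counting argument, and then run your packing/volume-ratio computation with the paper's radii $\tfrac{R}{2}$, $\tfrac{9R}{2}$, $\tfrac{17R}{2}$ (or your own), which is the part you already have essentially right.
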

The paper is organized as follows.
Section \ref{sec2}, we summarized some preliminaries. In Section \ref{sec3}, we  proved the extended mean curvature comparison Theorem \ref{Meancurvaturecomparison} and as an application, we proved the Meyer' s theorem \ref{Meyer's theorem} by means of the excess functions which it is recalled in (\ref{defexcess}). At the end of this section    some weak inequality which we used them for the extended volume comparison and estimate of the excess functions was recalled. Section \ref{sec5} is the extension of volume comparison theorem. As an application of extended volume comparison theorem we extended the theorem of Yau and Calabi \cite{yau1976some} about the growth of the volume. We generalized the Cheeger-Gromoll splitting theorem and obtained the famous  topological results of this theorem in \ref{sec7}. Section \ref{sec8} is devoted to the an estimate for the excess function and its application in topology. We proved an upper bound for the number of ends of the manifold in Section \ref{sec9}. Finally in Section \ref{sec10} as an example,  we used the extended ricci tensor for the (1,1)-self-adjoint tensor field $ A^2 $ for the study of the geometry and topology of a Riemannian hypersurface immersed isometrically in a Riemannian or Lorentzian manifold of constant sectional curvature, where $ A $ is the shape operator of the hypersurface. Although ${A^2}$ is not a Codazzi tensor, by the some restriction on $ A $, we get a similar extended radial mean curvature and get the above comparison results for the hypersurface.
\section{Preliminaries} \label{sec2}

In this section, we present some preliminaries that we use in through the  paper.
\begin{definition}
A self-adjoint operator $ A $ on a Riemannian manifold $( M,\langle , \rangle)$, is  a $ \left( {1,1} \right) $-tensor field which has the following property,
\[\forall X,Y \in \Gamma \left( {TM} \right):\left\langle {AX,Y} \right\rangle  = \left\langle {X,AY} \right\rangle. \]
\end{definition}
\begin{definition}\label{minmax}
Let $  A $ be a self-adjoint positive definite operator on a Riemannian manifold $\left( {M,\left\langle {\,,\,} \right\rangle } \right)$, we say that $ A $ is bounded if there are  constants $\delta _n ,\delta _1  > 0$ such that for any vector field $X \in \Gamma (TM)$  with $\left| X \right| = 1$, one has $\delta _1  < \left\langle {X,AX} \right\rangle  < \delta _n$.
\end{definition}

\begin{definition}\label{ricci}
Let $A $ be a self-adjoint operator on manifold $M$. We define $L_{A}, \Delta_{A}, \Delta_{A,f}, L_{A,f}, Ric_{A}$ and $Ric_{Trace A}$ as the follows:
\begin{itemize}
\item[a)]$ {L_A}(u): = div\left( {A\nabla u} \right) = \sum\nolimits_i {\left\langle {{\nabla _{{e_i}}}\left( {A\nabla u} \right),{e_i}} \right\rangle }  $,
\item[b)]$ {\Delta _A}(u): = \sum\nolimits_i {\left\langle {{\nabla _{{e_i}}}\nabla u,A{e_i}} \right\rangle } $,
\item[c)]For smooth functions $ f,u $ we define ${\Delta _{A,f}}(u) = {\Delta _A}u - \left\langle {\nabla f,\nabla u} \right\rangle ,$ also for a smooth vector field $ X $, define ${\Delta _{A,X}}u = {\Delta _A}u - \left\langle {X,\nabla u} \right\rangle.$
\item[d)]For smooth functions $ f,u $ we define ${L_{A,f}}(u): = {e^{\left( f \right)}}div\left( {{e^{ - f}}A\nabla u} \right).$
\item[e)]For smooth vector fields $ X,Y $ we define,
\[Ri{c_{Trace(A)}}(X,AY): = Ric(X,AY) - Hess\left( {Trace(A)} \right)(X,Y).\]
\item[f)]For smooth vector fields $ X,Y $ we define,
\[Ri{c_A}(X,Y): = \sum\nolimits_i {\left\langle {R(X,A{e_i}){e_i},Y} \right\rangle } ,\]
where $\left\{ {{e_i}} \right\}$ is an orthonormal basis at the point.
\end{itemize}
Note that $Ri{c_A}$ and $Ric(-,A-)$ both are extensions of Ricci tensor.
\end{definition}
By the following proposition for the distance function $r(x)$, ${\Delta _A}r$ has the same asymptotic behavior as $\Delta r$, when $r \to 0$.
\begin{prop}\label{assymptotic} Let $x_0$ be a fixed point of a Riemannian manifold $ M $ and $r(x) = dist(x_0,x)$, then $\mathop {\lim }\limits_{r \to 0} {r^2}\left( {{\Delta _A}r} \right) = 0$.
\end{prop}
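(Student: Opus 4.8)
The plan is to reduce the statement to the classical small-$r$ asymptotics of the ordinary Laplacian of the distance function, using only the boundedness of $A$ from Definition \ref{minmax} (the Codazzi hypothesis plays no role here). First I would rewrite $\Delta_A r$ as a trace. Writing $S$ for the self-adjoint endomorphism $X \mapsto \nabla_X \nabla r$, i.e. the Hessian of $r$ viewed as an operator on $TM$, the definition in \ref{ricci}(b) gives $\Delta_A r = \sum_i \langle S e_i, A e_i\rangle = \mathrm{tr}(A \circ S)$, where the last equality uses that both $A$ and $S$ are self-adjoint.

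Next I would diagonalize. Fix $x$ near $x_0$ and choose an orthonormal basis $\{e_i\}$ of eigenvectors of $S$, say $Se_i = \lambda_i e_i$. Since $\nabla r = \partial_r$ and $\nabla_{\partial_r}\partial_r = 0$ along radial geodesics, $S$ annihilates $\partial_r$ and preserves $(\partial_r)^\perp$; the standard Riccati analysis of the distance function (valid once $r$ is smaller than the injectivity radius, so that $r$ is smooth) shows that on $(\partial_r)^\perp$ the operator $S$ behaves like $\tfrac1r\,\mathrm{proj}_{(\partial_r)^\perp}$, with $n-1$ positive eigenvalues of the form $\lambda_i = \tfrac1r + O(r)$. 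In particular all $\lambda_i \ge 0$ for $r$ small. In this basis $\Delta_A r = \sum_i \lambda_i \langle e_i, A e_i\rangle$, and combining $\lambda_i \ge 0$ with the bound $\delta_1 < \langle e_i, A e_i\rangle < \delta_n$ of Definition \ref{minmax} yields the two-sided estimate $\delta_1 \Delta r \le \Delta_A r \le \delta_n \Delta r$, where $\Delta r = \sum_i \lambda_i = \mathrm{tr}\,S$ is the usual Laplacian of $r$.

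Finally I would multiply through by $r^2$ and let $r \to 0$. The classical expansion $\Delta r = \tfrac{n-1}{r} + O(r)$ gives $\lim_{r \to 0} r^2 \Delta r = 0$, so from $\delta_1\, r^2 \Delta r \le r^2 \Delta_A r \le \delta_n\, r^2 \Delta r$ the squeeze theorem forces $\lim_{r \to 0} r^2 \Delta_A r = 0$, as desired.

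The only genuinely delicate point is the control of the spectrum of $S$ near $x_0$: I need that for sufficiently small $r$ the Hessian of $r$ is positive semidefinite and that its trace blows up at most like $(n-1)/r$. Both facts follow from the observation that in geodesic normal coordinates $r$ agrees with the Euclidean radius to leading order, so $S \to \tfrac1r\,\mathrm{proj}_{(\partial_r)^\perp}$ as $r \to 0$; making this limit precise is where the real work lies, after which the boundedness of $A$ immediately transfers the known asymptotics of $\Delta r$ to $\Delta_A r$.
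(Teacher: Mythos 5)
Your argument is correct and takes essentially the same route as the paper: both reduce the claim to the classical expansion $\mathrm{Hess}\,r=\frac{1}{r}\left(\left\langle\,,\,\right\rangle-dr\otimes dr\right)+O(1)$ as $r\to 0^{+}$ and then invoke the boundedness of $A$ from Definition \ref{minmax}. The paper substitutes this expansion directly into $\Delta_A r=\sum_i \mathrm{Hess}\,r(e_i,Ae_i)$ to obtain $\Delta_A r=\frac{1}{r}\,\mathrm{Trace}\bigl(\mathrm{proj}_{\partial_r^{\perp}}\circ A|_{\partial_r^{\perp}}\bigr)+O(1)$, which is $O(1/r)$ because the trace is bounded by $(n-1)\delta_n$, while you repackage the same facts as the sandwich $\delta_1\,\Delta r\le \Delta_A r\le \delta_n\,\Delta r$; either way $r^{2}\Delta_A r\to 0$.
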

\begin{proof} One knows
\[Hessr = \frac{1}{r}\left( {\left\langle\,,\,\right\rangle  - dr \otimes dr} \right) + O(1){\mkern 1mu} {\mkern 1mu} ,{\mkern 1mu} {\mkern 1mu} r \to {0^ + }.\]
 Let $\left\{ {{e_i}} \right\}$ be an orthonormal frame field with ${e_1} = \nabla r$. So by definition of \ref{ricci} we have,
\[{\Delta _A}r = \sum\nolimits_i {\left\langle {{\nabla _{{e_i}}}\nabla r,A{e_i}} \right\rangle  = } \sum\nolimits_i {Hessr\left( {{e_i},A{e_i}} \right)}  = \frac{1}{r}\left( {Trace(pro{j_{\partial _r^ \bot }} \circ {{\left. A \right|}_{\partial _r^ \bot }}} \right) + O(1){\mkern 1mu} {\mkern 1mu} {\mkern 1mu} {\mkern 1mu} ,{\mkern 1mu} {\mkern 1mu} {\mkern 1mu} {\mkern 1mu} r \to {0^ + }.\]
\end{proof}
For comparison results in geometry one needs Bochner formula. The following theorem provides the extended Bochner formula.
\begin{theorem}[\textbf{Extended Bochner formula}]\cite{alencar2015eigenvalue,gomes2016eigenvalue}\label{extended Bochner} Let $M$ be a smooth Riemannian manifold and $A$ be a self-adjoint operator on $M$, then for any smooth function $u$ on $M$,
\begin{eqnarray*}
\frac{1}{2}{L_A}({\left| {\nabla u} \right|^2})&=&  \frac{1}{2}\left\langle {\nabla {{\left| {\nabla u} \right|}^2},div(A)} \right\rangle  + Trace\left( {A \circ hes{s^2}\left( u \right)} \right) + \left\langle {\nabla u,\nabla ({\Delta _A}u)} \right\rangle  \\&&- {\Delta _{\left( {{\nabla _{\nabla u}}A} \right)}}u + Ri{c_A}(\nabla u,\nabla u),
\end{eqnarray*}
where, $Ri{c_A}$ is defined in Definition \ref{ricci}.
\end{theorem}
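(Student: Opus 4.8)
The plan is to follow the classical derivation of the Bochner formula, inserting the operator $A$ at each stage and carefully tracking the extra terms that arise because $A$ is not parallel. First I would rewrite the left-hand side using $\tfrac12\nabla|\nabla u|^2 = \nabla_{\nabla u}\nabla u$, so that $\tfrac12 L_A(|\nabla u|^2) = div(A\nabla_{\nabla u}\nabla u)$, and then split the divergence by the Leibniz rule. Working in a local orthonormal frame $\{e_i\}$ that is geodesic at the point, and using that $A$ and each $\nabla_{e_i}A$ are self-adjoint, the contribution in which the derivative falls on $A$ collects into $\tfrac12\langle\nabla|\nabla u|^2, div(A)\rangle$, where $div(A)=\sum_i(\nabla_{e_i}A)e_i$. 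This isolates the first summand of the claimed identity and reduces the problem to computing $\tfrac12\Delta_A(|\nabla u|^2)$.

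For the second step I would expand the Hessian of $|\nabla u|^2$: in the geodesic frame one has $Hess(|\nabla u|^2)(e_i,e_j)=2\langle\nabla_{e_i}\nabla_{e_j}\nabla u,\nabla u\rangle+2\langle\nabla_{e_i}\nabla u,\nabla_{e_j}\nabla u\rangle$, and contracting with $A$ (that is, applying $\Delta_A$ as in Definition \ref{ricci}(b)) splits $\tfrac12\Delta_A(|\nabla u|^2)$ into two pieces. The purely second-order piece $\sum_{i,j}\langle Ae_i,e_j\rangle\langle\nabla_{e_i}\nabla u,\nabla_{e_j}\nabla u\rangle$ equals $Trace(A\circ hess^2(u))$, using self-adjointness of both $A$ and $Hess\,u$; this produces the second summand.

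The heart of the proof is the remaining third-order piece $T:=\sum_{i,j}\langle Ae_i,e_j\rangle\langle\nabla_{e_i}\nabla_{e_j}\nabla u,\nabla u\rangle$. I would reconstruct $\langle\nabla u,\nabla(\Delta_A u)\rangle$ from it by differentiating $\Delta_A u=\sum_k\langle\nabla_{e_k}\nabla u,Ae_k\rangle$ in the direction $\nabla u$; the Leibniz rule produces $\sum_k\langle\nabla_{\nabla u}\nabla_{e_k}\nabla u,Ae_k\rangle$ together with exactly one extra term in which $\nabla_{\nabla u}$ lands on $A$, and by definition that term is $\Delta_{(\nabla_{\nabla u}A)}u$. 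Comparing $T$ with $\sum_k\langle\nabla_{\nabla u}\nabla_{e_k}\nabla u,Ae_k\rangle$ via the symmetry of the third covariant derivative in its last two slots and the Ricci identity $\nabla_X\nabla_Y\nabla u-\nabla_Y\nabla_X\nabla u=R(X,Y)\nabla u$ introduces a curvature contraction; using the pair-symmetry of $R$ together with $\langle Ae_k,e_l\rangle=\langle e_k,Ae_l\rangle$ one checks that $\sum_k\langle R(\nabla u,Ae_k)e_k,\nabla u\rangle=\sum_k\langle R(\nabla u,e_k)Ae_k,\nabla u\rangle$, which is precisely $Ric_A(\nabla u,\nabla u)$ of Definition \ref{ricci}(f). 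Assembling the three pieces yields the stated formula.

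I expect the main obstacle to be the careful bookkeeping of the non-parallel terms, namely the places where a covariant derivative acts on $A$. These are absent in the classical case $A=I$, and they must be shown to recombine into exactly the single correction $-\Delta_{(\nabla_{\nabla u}A)}u$; in particular one has to verify that the commutator and connection terms generated when applying the Ricci identity along the non-frame directions $\nabla u$ and $Ae_k$ cancel against those produced when reassembling $\nabla(\Delta_A u)$, leaving the clean contraction $Ric_A$ rather than the a priori different tensor $Ric(\cdot,A\cdot)$.
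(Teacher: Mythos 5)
Your proposal is correct, but note that the paper itself offers no proof of this statement: Theorem \ref{extended Bochner} is imported verbatim from the cited references \cite{alencar2015eigenvalue,gomes2016eigenvalue}, so there is no internal argument to compare against. Your derivation is the natural self-contained one and it checks out: splitting $div(A\nabla_{\nabla u}\nabla u)$ by Leibniz gives the $\tfrac12\langle\nabla|\nabla u|^2,div(A)\rangle$ term plus $\tfrac12\Delta_A(|\nabla u|^2)$; the latter decomposes into $Trace(A\circ hess^2(u))$ plus the third-order contraction $T=\sum_i\langle (\nabla^2\nabla u)(e_i,\nabla u\,),Ae_i\rangle$ (using symmetry of $\nabla Hess\,u$ in its last two slots), and the Ricci identity converts $T$ into $\sum_i\langle \nabla_{\nabla u}\nabla_{e_i}\nabla u,Ae_i\rangle+\sum_i\langle R(e_i,\nabla u)\nabla u,Ae_i\rangle$; the first sum is $\langle\nabla u,\nabla(\Delta_A u)\rangle-\Delta_{(\nabla_{\nabla u}A)}u$ by Leibniz on $\Delta_A u=\sum_k\langle\nabla_{e_k}\nabla u,Ae_k\rangle$, and the second is exactly $Ric_A(\nabla u,\nabla u)$ after the pair symmetry of $R$. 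Your closing worry about extra commutator terms is resolved automatically if one phrases the Ricci identity for the tensorial second covariant derivative and evaluates in a frame geodesic at the point, since then $\nabla_{\nabla u}e_i=0$ there; and your observation that the curvature term is the a priori different tensor $Ric_A$ rather than $Ric(\cdot,A\cdot)$ is consistent with the paper, which only reconciles the two later via Lemma \ref{laplaceA} under the extra hypothesis $\nabla^*T^A=0$.
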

If $ A $ is a self-adjoint operator, then we define $ T^A $ as follows.
\begin{definition}\label{torsion} Let $ A $ be a $ (1,1)$-tensor field on $ M $. Define $ T^A $ as,
  \[{T^A}(X,Y): = \left( {{\nabla _X}A} \right)Y - \left( {{\nabla _Y}A} \right)X.\]
It is clear $T$ is a (2,1) tensor field and when $ T^A=0 $, then $ A $ is a Codazzi tensor, that is,   $(\nabla _{X}A)\langle Y,Z\rangle=(\nabla _{Y}A)\langle X,Z\rangle$.
\end{definition}
\begin{example}
If $B$ is the shape operator of a hypersurface ${\Sigma ^n} \subset {M^{n + 1}}$ then \[{T^B}(Y,X) = {\left( {\bar R(Y,X)N} \right)^T},\]
where ${\bar R}$ is the curvature tensor of $ M $ and $ N $ is the unit normal vector field on ${\Sigma ^n} \subset {M^{n + 1}}$.
\end{example}
\begin{lem}\label{ttorsion} Let $ B $ be a (1,1)-self-adjoint tensor field, then,
\[\left\langle {X,{T^B}(Y,Z)} \right\rangle  = \left\langle {{T^B}(Y,X),Z} \right\rangle  + \left\langle {{T^B}(X,Z),Y} \right\rangle .\]
\end{lem}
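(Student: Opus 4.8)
The plan is to reduce the entire statement to one structural fact: since $B$ is self-adjoint and the Levi--Civita connection is metric-compatible, each covariant derivative $\nabla_X B$ is again a self-adjoint $(1,1)$-tensor. First I would establish this auxiliary claim. Differentiating the identity $\langle BY,Z\rangle=\langle Y,BZ\rangle$ in the direction $X$ and expanding with the product rule together with $\nabla\langle\,,\,\rangle=0$ gives
\[\langle (\nabla_X B)Y,Z\rangle+\langle B\nabla_X Y,Z\rangle+\langle BY,\nabla_X Z\rangle=\langle \nabla_X Y,BZ\rangle+\langle Y,(\nabla_X B)Z\rangle+\langle Y,B\nabla_X Z\rangle.\]
By the self-adjointness of $B$ the two pairs $\langle B\nabla_X Y,Z\rangle=\langle \nabla_X Y,BZ\rangle$ and $\langle BY,\nabla_X Z\rangle=\langle Y,B\nabla_X Z\rangle$ cancel, leaving $\langle (\nabla_X B)Y,Z\rangle=\langle Y,(\nabla_X B)Z\rangle$ for all $X,Y,Z$. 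This is the only genuine input to the proof.

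With the self-adjointness of $\nabla_X B$ available, the rest is a purely algebraic expansion using $T^B(X,Y)=(\nabla_X B)Y-(\nabla_Y B)X$. For the left-hand side I would write
\[\langle X,T^B(Y,Z)\rangle=\langle X,(\nabla_Y B)Z\rangle-\langle X,(\nabla_Z B)Y\rangle,\]
and for the right-hand side I would expand $\langle T^B(Y,X),Z\rangle$ and $\langle T^B(X,Z),Y\rangle$ into the four terms $\langle (\nabla_Y B)X,Z\rangle-\langle (\nabla_X B)Y,Z\rangle+\langle (\nabla_X B)Z,Y\rangle-\langle (\nabla_Z B)X,Y\rangle$.

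The final step is bookkeeping. Applying the self-adjointness of $\nabla_X B$ shows that the two terms carrying a derivative in the direction $X$ satisfy $\langle (\nabla_X B)Y,Z\rangle=\langle (\nabla_X B)Z,Y\rangle$ and hence cancel; applying the self-adjointness of $\nabla_Y B$ and of $\nabla_Z B$ converts the surviving terms into $\langle X,(\nabla_Y B)Z\rangle-\langle X,(\nabla_Z B)Y\rangle$, which is exactly the left-hand side. I do not expect any real obstacle: the identity is pointwise and tensorial, and the computation is forced to close once $\nabla_X B$ is known to be self-adjoint. The only place demanding care is the slot-and-sign bookkeeping among the six terms, so I would organize the cancellation explicitly term by term rather than leaving it implicit.
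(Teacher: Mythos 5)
Your proof is correct and rests on exactly the same single input as the paper's: the self-adjointness of $\nabla_X B$ for self-adjoint $B$, which the paper uses tacitly in its chain of equalities (rewriting $\left\langle X,(\nabla_Y B)Z\right\rangle$ as $\left\langle (\nabla_Y B)X,Z\right\rangle$ and so on) and which you prove explicitly by differentiating $\left\langle BY,Z\right\rangle=\left\langle Y,BZ\right\rangle$. The only difference is organizational — the paper transforms the left-hand side into the right-hand side via an add-and-subtract step, while you expand both sides and cancel — so this is essentially the same argument, presented slightly more transparently.
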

\begin{proof} By computation, we have,
\begin{eqnarray*}
 \left\langle {X,{T^B}(Y,Z)} \right\rangle&=& \left\langle {X,\left( {{\nabla _Y}B} \right)Z - \left( {{\nabla _Z}B} \right)Y} \right\rangle
\\&=&\left\langle {\left( {{\nabla _Y}B} \right)X - \left( {{\nabla _X}B} \right)Y,Z} \right\rangle  + \left\langle {\left( {{\nabla _X}B} \right)Y,Z} \right\rangle  - \left\langle {X,\left( {{\nabla _Z}B} \right)Y} \right\rangle
\\&=&\left\langle {\left( {{\nabla _Y}B} \right)X - \left( {{\nabla _X}B} \right)Y,Z} \right\rangle  + \left\langle {Y,\left( {{\nabla _X}B} \right)Z - \left( {{\nabla _Z}B} \right)X} \right\rangle
\\&=&\left\langle {{T^B}(Y,X),Z} \right\rangle  + \left\langle {{T^B}(X,Z),Y} \right\rangle .
\end{eqnarray*}
\end{proof}
We compute the second covariant derivation of the operator $ B $. The result is applied to simplify the extended Bochner formula.
\begin{lem}\label{laplaceA1} Let $ B $ be a (1,1)-self-adjoint tensor field on the manifold $ M $ and $ X,Y,Z $  are vector fields on $ M $, then
\begin{itemize}
\item[a)]$\left( {{\nabla ^2}B} \right)\left( {X,Y,Z} \right) = \left( {{\nabla ^2}B} \right)\left( {X,Z,Y} \right) + R(Z,Y)\left( {BX} \right) - B\left( {R(Z,Y)X} \right),$
\item[b)]$\left( {{\nabla ^2}B} \right)\left( {X,Y,Z} \right) - \left( {{\nabla ^2}B} \right)\left( {Y,X,Z} \right) = \left( {{\nabla _Z}T^B} \right)(X,Y).$
\end{itemize}
\end{lem}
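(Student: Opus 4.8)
The plan is to treat both identities as instances of standard second–covariant–derivative bookkeeping for the endomorphism-valued field $B$, reduced to a single pointwise computation in a convenient frame. I would fix a point $p\in M$ and extend $X,Y,Z$ to vector fields that are parallel at $p$ (for instance, take geodesic normal coordinates and constant coefficient fields), so that $\nabla_X X,\nabla_Y Y,\dots$ and all Lie brackets of $X,Y,Z$ vanish at $p$. Every term appearing in both identities is tensorial, i.e. multilinear over $C^\infty(M)$ in $X,Y,Z$, so it suffices to verify each equality at $p$; the crucial benefit is that the correction terms of the form $\nabla_{\nabla_Z Y}B$ hidden inside the second covariant derivative all drop out at $p$. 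Here I read $(\nabla^2 B)(X,Y,Z)$ as the second covariant derivative of the $(1,1)$-tensor applied to the argument $X$, with $Y$ the inner and $Z$ the outer differentiation direction.

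For part (a), I would unfold the second covariant derivative using the Leibniz rule for the connection induced on $(1,1)$-tensors, $(\nabla_Y B)(X)=\nabla_Y(BX)-B(\nabla_Y X)$, and then differentiate once more in the $Z$-direction. The parallel-frame choice collapses this at $p$ to $\nabla_Z\nabla_Y(BX)-B(\nabla_Z\nabla_Y X)$, and the same expression with $Y$ and $Z$ interchanged. Subtracting the two and using $[\nabla_Z,\nabla_Y]=R(Z,Y)$ at $p$ (the bracket term vanishes since $[Y,Z]_p=0$) produces exactly $R(Z,Y)(BX)-B(R(Z,Y)X)$. This is simply the Ricci commutation identity, expressing that the curvature operator $R(Z,Y)$ acts as a derivation on the endomorphism $B$, once on its output and once on its input.

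For part (b), I would use that by Definition \ref{torsion} the tensor $T^B$ is the alternation of the first covariant derivative $\nabla B$ over its differentiation slot and its argument slot, namely $T^B(X,Y)=(\nabla_X B)Y-(\nabla_Y B)X$. Applying the Leibniz rule for $\nabla_Z$ to this $(2,1)$-tensor, $(\nabla_Z T^B)(X,Y)=\nabla_Z\big(T^B(X,Y)\big)-T^B(\nabla_Z X,Y)-T^B(X,\nabla_Z Y)$, and comparing with the pointwise expansion of $(\nabla^2 B)(X,Y,Z)-(\nabla^2 B)(Y,X,Z)$ obtained in the previous step, the two second-derivative contributions match termwise. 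The essential point is that covariant differentiation in the $Z$-direction commutes with the purely algebraic alternation in the remaining two entries, so no curvature term survives and one is left with $(\nabla_Z T^B)(X,Y)$; thus part (b) carries no geometric input beyond the definition of $T^B$ and is independent of part (a).

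The main obstacle is entirely notational rather than conceptual: one must keep the three slots of $\nabla^2 B$ — the $B$-argument, the inner direction, and the outer direction — in a consistent order throughout, and track the antisymmetry $R(Z,Y)=-R(Y,Z)$ so that the signs in (a) and (b) are produced from the same slot convention. Once the convention is pinned down by part (a) and the parallel-frame reduction removes the connection-coefficient clutter, both statements reduce to short, mechanical verifications.
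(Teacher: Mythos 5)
Your proof is correct and follows essentially the same route as the paper's: both unfold $(\nabla^2 B)(X,Y,Z)=(\nabla_Z(\nabla B))(X,Y)$ via the Leibniz rule for the induced connection, obtain (a) from the commutation identity $\nabla_Z\nabla_Y-\nabla_Y\nabla_Z-\nabla_{[Z,Y]}=R(Z,Y)$ acting as a derivation on the endomorphism $B$, and obtain (b) directly from the definition of $T^B$ as the alternation of $\nabla B$ in its two lower slots. The only difference is that you eliminate the correction terms such as $(\nabla_{\nabla_Z Y}B)X$ by evaluating at a point in a frame parallel there, whereas the paper carries these terms along and cancels them explicitly; this is a cosmetic simplification, not a different argument.
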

\begin{proof}
For part (a) we have,
\begin{eqnarray*}
{\nabla ^2}B(X,Y,Z) &=&\left( {\nabla \left( {\nabla B} \right)} \right)(X,Y,Z) = \left( {{\nabla _Z}\left( {\nabla B} \right)} \right)(X,Y)
\\&=& {\nabla _Z}\left( {\left( {\nabla B} \right)(X,Y)} \right) - \left( {\nabla B} \right)({\nabla _Z}X,Y) - \left( {\nabla B} \right)(X,{\nabla _Z}Y)
\\&=&{\nabla _Z}\left( {\left( {{\nabla _Y}B} \right)X} \right) - \left( {\nabla B} \right)({\nabla _Z}X,Y) - \left( {{\nabla _{{\nabla _Z}Y}}B} \right)(X)
\\&=& \left( {{\nabla _Z}\left( {{\nabla _Y}B} \right)} \right)X + \left( {{\nabla _Y}B} \right)\left( {{\nabla _Z}X} \right) - \left( {{\nabla _Y}B} \right)({\nabla _Z}X) - \left( {{\nabla _{{\nabla _Z}Y}}B} \right)(X)
\\&=&\left( {{\nabla _Z}\left( {{\nabla _Y}B} \right)} \right)X - \left( {{\nabla _{{\nabla _Z}Y}}B} \right)X
\end{eqnarray*}
Similarly,
\[{\nabla ^2}B(X,Z,Y) = \left( {{\nabla _Y}\left( {{\nabla _Z}B} \right)} \right)X - \left( {{\nabla _{{\nabla _Y}Z}}B} \right)X.\]
 Thus
\begin{eqnarray*}
{\nabla ^2}B(X,Y,Z) - {\nabla ^2}B(X,Z,Y){\rm{ }}&=&\left( {{\nabla _Z}{\nabla _Y}B} \right)X - \left( {{\nabla _Y}\left( {{\nabla _Z}B} \right)} \right)X - \left( {{\nabla _{\left[ {Z,Y} \right]}}B} \right)X
\\&=&\left( {R(Z,Y)B} \right)X = R(Z,Y)\left( {BX} \right) - B\left( {\left( {R(Z,Y)X} \right)} \right).
\end{eqnarray*}
For part (b), by definition of $ T $, we have
\begin{eqnarray*}
{\nabla ^2}B(X,Y,Z)&=&\left( {{\nabla _Z}\left( {\nabla B} \right)} \right)\left( {X,Y} \right)
\\&=&{\nabla _Z}\left( {\left( {\nabla B} \right)\left( {X,Y} \right)} \right) - \left( {\nabla B} \right)\left( {{\nabla _Z}X,Y} \right) - \left( {\nabla B} \right)\left( {X,{\nabla _Z}Y} \right)
\\&=&{\nabla _Z}\left( {\left( {\nabla B} \right)\left( {Y,X} \right) + {T^B}\left( {Y,X} \right)} \right) - \left( {\nabla B} \right)\left( {{\nabla _Z}X,Y} \right) - \left( {\nabla B} \right)\left( {X,{\nabla _Z}Y} \right)
\\&=&{\nabla _Z}\left( {\left( {\nabla B} \right)\left( {Y,X} \right)} \right) + {\nabla _Z}\left( {{T^B}\left( {Y,X} \right)} \right) - \left( {\nabla B} \right)\left( {{\nabla _Z}X,Y} \right) - \left( {\nabla B} \right)\left( {X,{\nabla _Z}Y} \right)
\\&=&\left( {{\nabla _Z}\left( {\nabla B} \right)\left( {Y,X} \right)} \right) + \left( {\nabla B} \right)\left( {{\nabla _Z}Y,X} \right) + \left( {\nabla B} \right)\left( {Y,{\nabla _Z}X} \right)
\\&&+ {\nabla _Z}\left( {{T^B}\left( {Y,X} \right)} \right) - \left( {\nabla B} \right)\left( {{\nabla _Z}X,Y} \right) - \left( {\nabla B} \right)\left( {X,{\nabla _Z}Y} \right)
\\&=& \left( {{\nabla _Z}\left( {\nabla B} \right)\left( {Y,X} \right)} \right) + {\nabla _Z}\left( {{T^B}\left( {Y,X} \right)} \right) - {T^B}\left( {{\nabla _Z}Y,X} \right) - {T^B}\left( {Y,{\nabla _Z}X} \right)
\\&=&\left( {{\nabla _Z}\left( {\nabla B} \right)\left( {Y,X} \right)} \right) + \left( {{\nabla _Z}{T^B}} \right)\left( {X,Y} \right).
\end{eqnarray*}
\end{proof}
\begin{lem}\label{laplaceA} Let $ B $ be a $ (1,1)-$symmetric tensor field and ${\nabla ^*}{T^B} = 0$, then
\[\left\langle {\left( {\Delta B} \right)X,X} \right\rangle  = \left\langle {{\nabla _X}div(B),X} \right\rangle  - Ri{c_B}(X,X) + Ric(X,BX),\]
where $\nabla ^*$ is adjoint of $\nabla$.
\end{lem}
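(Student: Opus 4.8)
The plan is to compute the rough Laplacian $\Delta B$ by commuting covariant derivatives with Lemma \ref{laplaceA1} and then reading off the curvature contractions. Fix the point, let $\{e_i\}$ be a geodesic orthonormal frame with $\nabla_{e_i}e_j=0$ at that point, and interpret $\Delta B$ as the trace of the second covariant derivative over its two differentiation slots, i.e. $(\Delta B)X=\sum_i(\nabla^2 B)(X,e_i,e_i)$, in the index convention of Lemma \ref{laplaceA1} (third slot $=$ outer derivative). I would also record that $div(B)=\sum_i(\nabla_{e_i}B)e_i$ and that the curvature sign convention is the one forced by Definition \ref{ricci}, namely $Ric(X,Y)=\sum_i\langle R(X,e_i)e_i,Y\rangle$, which is the $A=\mathrm{Id}$ case of $Ri{c_A}$.

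First I would commute the two derivative directions. By Lemma \ref{laplaceA1}(b),
\[(\nabla^2 B)(X,e_i,e_i)=(\nabla^2 B)(e_i,X,e_i)+(\nabla_{e_i}T^B)(X,e_i),\]
so summing over $i$ gives $(\Delta B)X=\sum_i(\nabla^2 B)(e_i,X,e_i)+\sum_i(\nabla_{e_i}T^B)(X,e_i)$. The torsion sum is exactly where the hypothesis $\nabla^*T^B=0$ enters: since $T^B$ is antisymmetric in its two arguments (Definition \ref{torsion}), one has $\sum_i(\nabla_{e_i}T^B)(X,e_i)=-\sum_i(\nabla_{e_i}T^B)(e_i,X)$, and the latter contraction is, up to sign, the codifferential $\nabla^*T^B$ evaluated at $X$; hence it vanishes. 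This leaves $(\Delta B)X=\sum_i(\nabla^2 B)(e_i,X,e_i)$.

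Next I would swap the last two slots using Lemma \ref{laplaceA1}(a) with the substitution $(X,Y,Z)\mapsto(e_i,X,e_i)$, obtaining
\[(\nabla^2 B)(e_i,X,e_i)=(\nabla^2 B)(e_i,e_i,X)+R(e_i,X)(Be_i)-B\bigl(R(e_i,X)e_i\bigr).\]
It then remains to identify the three resulting sums after pairing with $X$. For the first, in the geodesic frame $\sum_i(\nabla^2 B)(e_i,e_i,X)=\nabla_X\bigl(\sum_i(\nabla_{e_i}B)e_i\bigr)=\nabla_X div(B)$, contributing $\langle\nabla_X div(B),X\rangle$. For the second, the pair-interchange symmetry $\langle R(W,Z)U,V\rangle=\langle R(U,V)W,Z\rangle$ followed by the antisymmetry of $R$ converts $\sum_i\langle R(e_i,X)(Be_i),X\rangle$ into $-Ri{c_B}(X,X)$. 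For the third, self-adjointness of $B$ gives $\langle B(R(e_i,X)e_i),X\rangle=\langle R(e_i,X)e_i,BX\rangle$, and the antisymmetry of $R$ then yields $\sum_i\langle R(e_i,X)e_i,BX\rangle=-Ric(X,BX)$, so the term $-\sum_i\langle B(R(e_i,X)e_i),X\rangle$ contributes $+Ric(X,BX)$. Assembling the three pieces produces the claimed identity.

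The main obstacle is bookkeeping rather than conceptual. I expect the two delicate points to be: (i) translating $\nabla^*T^B=0$ into the vanishing of $\sum_i(\nabla_{e_i}T^B)(X,e_i)$, where the antisymmetry of $T^B$ guarantees that the contraction in either argument agrees up to sign, so that whichever slot the codifferential contracts the term dies; and (ii) tracking the curvature sign conventions so that the interchange and antisymmetry relations of $R$ deliver precisely $-Ri{c_B}(X,X)$ and $+Ric(X,BX)$ rather than the opposite signs. Once the conventions of Definition \ref{ricci} are pinned down, each reduction is a short symmetry computation.
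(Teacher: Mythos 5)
Your proposal is correct and follows essentially the same route as the paper: write $\langle(\Delta B)X,X\rangle=\sum_i\langle(\nabla^2B)(X,e_i,e_i),X\rangle$, use Lemma \ref{laplaceA1}(b) to move $X$ into the argument slot (the torsion contraction dying by $\nabla^*T^B=0$), then Lemma \ref{laplaceA1}(a) to swap the differentiation slots and read off $\langle\nabla_X div(B),X\rangle-Ric_B(X,X)+Ric(X,BX)$ from the curvature terms. The only difference is that you spell out the sign bookkeeping for the curvature contractions and the codifferential more explicitly than the paper does, which is harmless.
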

\begin{proof} For simplicity let $\left\{ {{e_i}} \right\}$ be an orthonormal local frame field with ${\nabla _{{e_i}}}{e_j} = 0$ at the computing point. By computation and Lemma \ref{laplaceA1} we have,
\begin{eqnarray*}
\left\langle {\left( {\Delta B} \right)X,X} \right\rangle&=&\sum\nolimits_i {\left\langle {\left( {{\nabla _{{e_i}}}{\nabla _{{e_i}}}B} \right)X,X} \right\rangle }  = \sum\nolimits_i {\left\langle {{\nabla ^2}B(X,{e_i},{e_i}),X} \right\rangle }
\\&=& \sum\nolimits_i {\left\langle {{\nabla ^2}B({e_i},X,{e_i}),X} \right\rangle }  + \sum\nolimits_i {\left\langle {\left( {{\nabla _{{e_i}}}{T^B}} \right)(X,{e_i}),X} \right\rangle }
\\&=& \sum\nolimits_i {\left\langle {{\nabla ^2}B({e_i},X,{e_i}),X} \right\rangle } .
\end{eqnarray*}
 So by Lemma \ref{laplaceA1}, part (a) we have
\begin{eqnarray*}
\left\langle {\left( {\Delta B} \right)X,X} \right\rangle &=& \sum\nolimits_i {\left\langle {{\nabla ^2}B({e_i},X,{e_i}),X} \right\rangle }
\\&=&\sum\nolimits_i {\left\langle {{\nabla ^2}B({e_i},{e_i},X) + R({e_i},X)\left( {B{e_i}} \right) -B\left( {\left( {R({e_i},X){e_i}} \right)} \right),X} \right\rangle }
\\&=& \left\langle {\left( {{\nabla _X}divB} \right),X} \right\rangle  - Ri{c_B}\left( {X,X} \right) + Ric\left( {X,BX} \right).
\end{eqnarray*}
\end{proof}
The extended Bochner formula \ref{extended Bochner} is very complicated. The complication of the formula is about the existence of parameter ${\Delta _{{\nabla _u}B}}u $. Howerer, when ${{\Delta _{{\nabla _u}B}}u \le 0}$, the Bochner formula get the simple Riccati inequality. In the following proposition, we get some parameters which seems are suitable to estimate ${{\Delta _{{\nabla _u}B}}u}$. These parameters show the affection of parallelness and values of eigenfunction on the value of ${{\Delta _{{\nabla _u}B}}u}$. The values of these parameter depend on analytic and algebraic properties of the tensor $ B $.
\begin{prop}\label{lcomplicated} Let $ B $ be a (1,1)-self-adjoint tensor field on the manifold $ M $ then
\begin{eqnarray*}
{\Delta _{\left( {{\nabla _{{\nabla _u}}}B} \right)}}u&=&\nabla u.\nabla u.Trace(B) - \left\langle {\nabla u,\left( {\Delta B} \right)\nabla u} \right\rangle  + \sum\nolimits_i {\left\langle {{T^{\left( {{\nabla _{\nabla u}}B} \right)}}({e_i},\nabla u),{e_i}} \right\rangle }
\\&&+ \sum\nolimits_i {{e_i}.\left\langle {\nabla u,{T^B}({e_i},\nabla u)} \right\rangle .}
\end{eqnarray*}
\end{prop}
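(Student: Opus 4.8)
The plan is to compute $\Delta_{(\nabla_{\nabla u}B)}u$ directly from its definition in \ref{ricci}(b), namely
\[
\Delta_{(\nabla_{\nabla u}B)}u = \sum\nolimits_i \left\langle \nabla_{e_i}\nabla u,\, (\nabla_{\nabla u}B)e_i\right\rangle,
\]
where $\{e_i\}$ is an orthonormal frame chosen so that $\nabla_{e_i}e_j=0$ at the point of computation. The strategy is to transfer the covariant derivative of $B$ off of $e_i$ and onto $\nabla u$ using the self-adjointness of $B$ and its derivatives, then recognize the resulting terms as the four expressions on the right-hand side. First I would use the self-adjointness of $(\nabla_{\nabla u}B)$ (which follows since $B$ is self-adjoint, so each $\nabla_X B$ is again self-adjoint) to rewrite the summand as $\langle (\nabla_{\nabla u}B)\nabla_{e_i}\nabla u,\, e_i\rangle$, and then bring the operator inside via a product-rule manipulation in the direction $e_i$.

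The key algebraic step is to rewrite $(\nabla_{\nabla u}B)$ in terms of a quantity whose $e_i$-derivative is controllable. I would introduce the torsion-type tensor: by Definition \ref{torsion},
\[
(\nabla_{\nabla u}B)e_i = (\nabla_{e_i}B)\nabla u + T^B(\nabla u,e_i),
\]
which lets me split the sum into a piece built from $\nabla_{e_i}B$ and a piece involving $T^B$. For the first piece I would apply the Leibniz rule to $e_i.\langle \nabla u,(\nabla_{e_i}B)\nabla u\rangle$ type expressions and sum over $i$; the iterated covariant derivatives of $B$ assemble into $\langle \nabla u,(\Delta B)\nabla u\rangle$ (the rough Laplacian of $B$ contracted against $\nabla u$), while the Hessian terms acting through $\mathrm{Trace}(B)$ collect into $\nabla u.\nabla u.\mathrm{Trace}(B)$. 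The torsion piece $T^B(\nabla u,e_i)$, after a second application of Definition \ref{torsion} now applied to the tensor $(\nabla_{\nabla u}B)$, produces the remaining two terms $\sum_i\langle T^{(\nabla_{\nabla u}B)}(e_i,\nabla u),e_i\rangle$ and $\sum_i e_i.\langle \nabla u,T^B(e_i,\nabla u)\rangle$.

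The main bookkeeping device throughout is Lemma \ref{laplaceA1}: part (a) exchanges the last two slots of $\nabla^2 B$ at the cost of curvature terms, and part (b) exchanges the first two slots at the cost of $(\nabla_Z T^B)$. I expect the main obstacle to be precisely the careful tracking of which covariant derivatives fall on $B$ versus on the arguments, and ensuring the frame condition $\nabla_{e_i}e_j=0$ is invoked consistently so that all Christoffel-symbol contributions vanish at the base point while the terms $e_i.\langle\,\cdot\,\rangle$ that survive are exactly the ones recorded in the statement. The delicate point is the accounting for the symmetrization between the direction $\nabla u$ (along which $B$ is differentiated) and the directions $e_i$ (along which $\nabla u$ is differentiated): these must be disentangled using Lemma \ref{ttorsion} so that the non-symmetric part of the contraction is packaged cleanly into the two $T$-tensor sums, leaving the symmetric analytic content in the $\mathrm{Trace}(B)$ and $\Delta B$ terms.
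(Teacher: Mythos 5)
Your plan follows essentially the same route as the paper's proof: compute $\Delta_{(\nabla_{\nabla u}B)}u$ directly in a normal frame, split $(\nabla_{\nabla u}B)e_i = (\nabla_{e_i}B)\nabla u + T^B(\nabla u,e_i)$ via Definition \ref{torsion}, apply the Leibniz rule to $e_i.\langle\nabla u,(\nabla_{e_i}B)\nabla u\rangle$ to produce the $\langle\nabla u,(\Delta B)\nabla u\rangle$ term, and apply Definition \ref{torsion} a second time to the tensor $\nabla_{\nabla u}B$ inside $\langle\nabla u,\mathrm{div}(\nabla_{\nabla u}B)\rangle$ to produce $\nabla u.\nabla u.\mathrm{Trace}(B)$ and the $T^{(\nabla_{\nabla u}B)}$ sum. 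The only (harmless) discrepancy is that the paper's argument uses neither Lemma \ref{laplaceA1} nor Lemma \ref{ttorsion} here, only the antisymmetry of $T^B$ and the self-adjointness of $\nabla_X B$.
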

\begin{proof}
Let $ B $ be a  $(1,1)$-tensor field, then
\begin{eqnarray*}
{\Delta _{\left( {{\nabla _{{\nabla _u}}}B} \right)}}u&=&  \sum\nolimits_i {\left\langle {{\nabla _{{e_i}}}\nabla u,\left( {{\nabla _{{\nabla _u}}}B} \right){e_i}} \right\rangle }\\
&=& \sum\nolimits_i {\left\langle {{\nabla _{{e_i}}}\nabla u,\left( {{\nabla _{{e_i}}}B} \right)\nabla u} \right\rangle }  + \sum\nolimits_i {\left\langle {{\nabla _{{e_i}}}\nabla u,{T^B}(\nabla u,{e_i})} \right\rangle }
\\&=&\sum\nolimits_i {{e_i}.\left\langle {\nabla u,\left( {{\nabla _{{e_i}}}B} \right)\nabla u} \right\rangle  - \sum\nolimits_i {\left\langle {\nabla u,\left( {\nabla _{{e_i}}^2B} \right)\nabla u} \right\rangle } }
\\&&- \sum\nolimits_i {\left\langle {{\nabla _{{e_i}}}\nabla u,\left( {{\nabla _{{e_i}}}B} \right)\nabla u} \right\rangle }  + \sum\nolimits_i {\left\langle {{\nabla _{{e_i}}}\nabla u,{T^B}(\nabla u,{e_i})} \right\rangle }
\\&=&\sum\nolimits_i {{e_i}.\left\langle {\nabla u,\left( {{\nabla _{{e_i}}}B} \right)\nabla u} \right\rangle }  - \left\langle {\nabla u,\left( {\Delta B} \right)\nabla u} \right\rangle  - {\Delta _{\left( {{\nabla _{{\nabla _u}}}B} \right)}}u
\\&& - \sum\nolimits_i {\left\langle {{\nabla _{{e_i}}}\nabla u,{T^B}({e_i},\nabla u)} \right\rangle }  + \sum\nolimits_i {\left\langle {{\nabla _{{e_i}}}\nabla u,{T^B}(\nabla u,{e_i})} \right\rangle }
\\&=& \sum\nolimits_i {{e_i}.\left\langle {\nabla u,\left( {{\nabla _{{e_i}}}B} \right)\nabla u} \right\rangle }  - \left\langle {\nabla u,\left( {\Delta B} \right)\nabla u} \right\rangle  - {\Delta _{\left( {{\nabla _{{\nabla _u}}}B} \right)}}u
\\&& + 2\sum\nolimits_i {\left\langle {{\nabla _{{e_i}}}\nabla u,{T^B}(\nabla u,{e_i})} \right\rangle }
\\&=&\sum\nolimits_i {{e_i}.\left\langle {\nabla u,\left( {{\nabla _{\nabla u}}B} \right){e_i}} \right\rangle }  + \sum\nolimits_i {{e_i}.\left\langle {\nabla u,{T^B}({e_i},\nabla u)} \right\rangle }  - \left\langle {\nabla u,\left( {\Delta B} \right)\nabla u} \right\rangle
\\&&- {\Delta _{\left( {{\nabla _{{\nabla _u}}}B} \right)}}u + 2\sum\nolimits_i {\left\langle {{\nabla _{{e_i}}}\nabla u,{T^B}(\nabla u,{e_i})} \right\rangle }
\\&=&{\Delta _{\left( {{\nabla _{{\nabla _u}}}B} \right)}}u + \sum\nolimits_i {\left\langle {\nabla u,\left( {{\nabla _{{e_i}}}{\nabla _{\nabla u}}B} \right){e_i}} \right\rangle }  + \sum\nolimits_i {\left\langle {{\nabla _{{e_i}}}\nabla u,{T^B}({e_i},\nabla u)} \right\rangle }
\\&&+ \sum\nolimits_i {\left\langle {\nabla u,{\nabla _{{e_i}}}\left( {{T^B}({e_i},\nabla u)} \right)} \right\rangle }  - \left\langle {\nabla u,\left( {\Delta B} \right)\nabla u} \right\rangle  - {\Delta _{\left( {{\nabla _{{\nabla _u}}}B} \right)}}u
\\&&+ 2\sum\nolimits_i {\left\langle {{\nabla _{{e_i}}}\nabla u,{T^B}(\nabla u,{e_i})} \right\rangle }
\end{eqnarray*}
In other words,
\[{\Delta _{\left( {{\nabla _{{\nabla _u}}}B} \right)}}u = \left\langle {\nabla u,div\left( {{\nabla _{\nabla u}}B} \right)} \right\rangle  + \sum\nolimits_i {{e_i}.\left\langle {\nabla u,{T^B}({e_i},\nabla u)} \right\rangle }  - \left\langle {\nabla u,\left( {\Delta B} \right)\nabla u} \right\rangle .\]
But,
\begin{eqnarray*}
\left\langle {\nabla u,div\left( {{\nabla _{\nabla u}}B} \right)} \right\rangle&=&\sum\nolimits_i {\left\langle {\nabla u,\left( {{\nabla _{{e_i}}}\left( {{\nabla _{\nabla u}}B} \right)} \right){e_i}} \right\rangle }  = \sum\nolimits_i {\left\langle {\left( {{\nabla _{{e_i}}}\left( {{\nabla _{\nabla u}}B} \right)} \right)\nabla u,{e_i}} \right\rangle }
\\&=&\sum\nolimits_i {\left\langle {\left( {{\nabla _{\nabla u}}\left( {{\nabla _{\nabla u}}B} \right)} \right){e_i} + {T^{\left( {{\nabla _{\nabla u}}B} \right)}}({e_i},\nabla u),{e_i}} \right\rangle }
\\&=&\nabla u.\nabla u.Trace(B) + \sum\nolimits_i {\left\langle {{T^{\left( {{\nabla _{\nabla u}}B} \right)}}({e_i},\nabla u),{e_i}} \right\rangle .}
\end{eqnarray*}
So,
\begin{eqnarray*}
{\Delta _{\left( {{\nabla _{{\nabla _u}}}B} \right)}}u&=&\nabla u.\nabla u.Trace(B) - \left\langle {\nabla u,\left( {\Delta B} \right)\nabla u} \right\rangle  + \sum\nolimits_i {\left\langle {{T^{\left( {{\nabla _{\nabla u}}B} \right)}}({e_i},\nabla u),{e_i}} \right\rangle }
\\&&+\sum\nolimits_i {{e_i}.\left\langle {\nabla u,{T^B}({e_i},\nabla u)} \right\rangle .}
\end{eqnarray*}
\end{proof}
So, the extended Bochner formula \ref{extended Bochner} can be written as follows.
\begin{prop}\label{extendedbochner2} Let $ B $ a (1,1)-self-adjoint  tensor field on a manifold $ M $ with ${\nabla ^*}{T^B} = 0$  and $ u $ be a smooth function, then
\begin{eqnarray*}
\frac{1}{2}{L_B}({{\left| {\nabla u} \right|}^2}) &=& \frac{1}{2}\left\langle {\nabla {{\left| {\nabla u} \right|}^2},div(B)} \right\rangle  + Trace\left( {B \circ hes{s^2}\left( u \right)} \right) + \left\langle {\nabla u,\nabla ({\Delta _B}u)} \right\rangle\\
&&- \nabla u.\nabla u.Trace(B) + \left\langle {{\nabla _{\nabla u}}div(B),\nabla u} \right\rangle  + Ric(\nabla u,B\nabla u)
\\&&- \sum\nolimits_i {\left\langle {{T^{\left( {{\nabla _{\nabla u}}B} \right)}}({e_i},\nabla u),{e_i}} \right\rangle }  - \sum\nolimits_i {{e_i}.\left\langle {\nabla u,{T^B}({e_i},\nabla u)} \right\rangle .}
\end{eqnarray*}
\end{prop}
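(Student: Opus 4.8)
The plan is to obtain this as an immediate consequence of the three results just proved, with no new geometric input required. I would start from the extended Bochner formula (Theorem \ref{extended Bochner}) applied with $A=B$, which already gives
\[\frac{1}{2}{L_B}({\left| {\nabla u} \right|^2}) = \frac{1}{2}\left\langle {\nabla {{\left| {\nabla u} \right|}^2},div(B)} \right\rangle  + Trace\left( {B \circ hes{s^2}\left( u \right)} \right) + \left\langle {\nabla u,\nabla ({\Delta _B}u)} \right\rangle - {\Delta _{\left( {{\nabla _{\nabla u}}B} \right)}}u + Ri{c_B}(\nabla u,\nabla u).\]
The three ``standard'' terms on the right are already exactly those appearing in the claimed identity, so I would leave them untouched and concentrate entirely on rewriting the two genuinely $B$-dependent correction terms $-{\Delta _{\left( {{\nabla _{\nabla u}}B} \right)}}u$ and $Ri{c_B}(\nabla u,\nabla u)$.

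First I would substitute Proposition \ref{lcomplicated} for the term ${\Delta _{\left( {{\nabla _{\nabla u}}B} \right)}}u$. Reading off the four summands of that proposition and flipping their signs, the quantity $-{\Delta _{\left( {{\nabla _{\nabla u}}B} \right)}}u$ becomes
\[-\nabla u.\nabla u.Trace(B) + \left\langle {\nabla u,\left( {\Delta B} \right)\nabla u} \right\rangle  - \sum\nolimits_i {\left\langle {{T^{\left( {{\nabla _{\nabla u}}B} \right)}}({e_i},\nabla u),{e_i}} \right\rangle }  - \sum\nolimits_i {{e_i}.\left\langle {\nabla u,{T^B}({e_i},\nabla u)} \right\rangle }.\]
Three of these four pieces, namely $-\nabla u.\nabla u.Trace(B)$ and the two torsion contractions, are already precisely the corresponding terms of the target formula, carrying the correct signs. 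What remains to be reconciled is the leftover pair consisting of $\left\langle {\nabla u,\left( {\Delta B} \right)\nabla u} \right\rangle$ together with the $Ri{c_B}(\nabla u,\nabla u)$ inherited from the Bochner formula.

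The decisive step is to handle this pair, and it is the only place where the hypothesis ${\nabla ^*}{T^B} = 0$ is used. I would apply Lemma \ref{laplaceA} with $X=\nabla u$, using the symmetry $\left\langle {\left( {\Delta B} \right)\nabla u,\nabla u} \right\rangle = \left\langle {\nabla u,\left( {\Delta B} \right)\nabla u} \right\rangle$ so that the lemma applies verbatim, to get
\[\left\langle {\nabla u,\left( {\Delta B} \right)\nabla u} \right\rangle  = \left\langle {{\nabla _{\nabla u}}div(B),\nabla u} \right\rangle  - Ri{c_B}(\nabla u,\nabla u) + Ric(\nabla u,B\nabla u).\]
Adding the surviving $Ri{c_B}(\nabla u,\nabla u)$ term then cancels the $-Ri{c_B}(\nabla u,\nabla u)$ here, leaving exactly $\left\langle {{\nabla _{\nabla u}}div(B),\nabla u} \right\rangle + Ric(\nabla u,B\nabla u)$, which are the two final terms of the asserted identity. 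Collecting the untouched standard terms, the three terms produced by Proposition \ref{lcomplicated}, and these two terms yields the stated formula. Since every step is a direct substitution, there is no analytic obstacle; the only care required is sign bookkeeping and noticing that the Codazzi defect enters solely through the two torsion contractions, which vanish when $B$ is Codazzi.
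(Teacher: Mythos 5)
Your proposal is correct and is exactly the paper's argument: the paper's proof consists of the single sentence that the result follows from Theorem \ref{extended Bochner}, Proposition \ref{lcomplicated}, and Lemma \ref{laplaceA}, and you have simply written out that substitution, with the $Ri{c_B}(\nabla u,\nabla u)$ cancellation handled correctly. No differences to report.
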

\begin{proof} The proof is clear by Proposition \ref{lcomplicated} and Theorem \ref{extended Bochner} and Lemma \ref{laplaceA}.
\end{proof}
\section{extended mean curvature comparison}\label{sec3}
In this section, we prove two version of extended mean curvature comparison theorem, when $ A $ is a $(1,1)$-self-adjoint Codazzi tensor. The first one is about ${\Delta _{A,f}}$, which is used for extended Meyer's theorem, extended Cheeger-Gromoll splitting theorem and and estimating of the excess functions. The second is about ${L_{A,f}}$ which is used to prove extended volume comparison and its topological results. For the first one we used the tensor $Ric({\partial _r},A{\partial _r})$ and for the second one we apply the tensor $Ri{c_{Trace(A)}}({\partial _r},A{\partial _r}) = Ric({\partial _r},A{\partial _r}) + {\partial _r}.\left\langle {divA,{\partial _r}} \right\rangle $. Let  $ x_0 $ be a fixed point of $  M $, we define $ r(x) = dist(x_0, x) $, then $r(x)$ is smooth on $x \in M\backslash cut(x_0)$  and $\left| {\nabla r} \right| = 1$. For simplicity, we denote $\nabla r$ by ${\partial _r}$. So by Theorem \ref{extendedbochner2} we get the following result.
\begin{theorem}\label{bochner3} Let $ A $ be a (1,1)-self-adjoint Codazzi tensor and $ r(x) = dist(x_0, x) $ be the distance function from a fixed point $x_0 $, then on $M\backslash cut(x_0)$ we have,
\[0 = Trace\left( {A \circ hes{s^2}\left( r \right)} \right) + {\partial _r}.({\Delta _A}r) + Ric({\partial _r},A{\partial _r}) - \sum\nolimits_i {\left\langle {{T^{\left( {{\nabla _{{\partial _r}}}A} \right)}}({e_i},{\partial _r}),{e_i}} \right\rangle } .\]
\end{theorem}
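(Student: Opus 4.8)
The plan is to specialize the simplified extended Bochner formula of Proposition~\ref{extendedbochner2} to the case $B = A$ and $u = r$, and then to exploit the two structural features of the present setting: that $r$ is a distance function (so $\left| {\nabla r} \right| \equiv 1$ on $M\backslash cut(x_0)$ and its gradient flow is geodesic) and that $A$ is Codazzi (so $T^A = 0$). Substituting $u = r$, the left-hand side becomes $\tfrac{1}{2}L_A(\left|\nabla r\right|^2) = \tfrac{1}{2}L_A(1) = 0$, which is what produces the leading $0$ in the claimed identity.

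Next I would kill the terms that vanish for elementary reasons. Since $\left|\nabla r\right|^2 \equiv 1$ is constant, $\nabla\left|\nabla r\right|^2 = 0$, so the term $\tfrac{1}{2}\left\langle {\nabla \left|\nabla u\right|^2, div(A)} \right\rangle$ drops out. Because $A$ is Codazzi we have $T^A = 0$, which immediately annihilates the last term $\sum_i {e_i}.\left\langle {\nabla r, T^A(e_i,\nabla r)} \right\rangle$. The three surviving terms $Trace(A\circ hess^2(r))$, $\left\langle {\nabla r, \nabla(\Delta_A r)} \right\rangle = {\partial_r}.(\Delta_A r)$, and $Ric(\nabla r, A\nabla r) = Ric({\partial_r}, A{\partial_r})$, together with the retained $-\sum_i\left\langle {T^{(\nabla_{\nabla r}A)}(e_i,\nabla r),e_i} \right\rangle$, are exactly the four terms appearing in the statement; note that there is no reason for $T^{(\nabla_{\partial_r}A)}$ to vanish even though $T^A$ does, so this term legitimately persists.

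The only genuinely delicate point is the cancellation of the two remaining $Trace(A)$ terms from Proposition~\ref{extendedbochner2}, namely $-\nabla r.\nabla r.Trace(A)$ and $\left\langle {\nabla_{\nabla r}div(A),\nabla r} \right\rangle$. Here I would first record that for a Codazzi tensor $div(A) = \nabla(Trace\,A)$: indeed $\left\langle {div(A),X} \right\rangle = \sum_i\left\langle {(\nabla_{e_i}A)X,e_i} \right\rangle = \sum_i\left\langle {(\nabla_X A)e_i,e_i} \right\rangle = X.Trace(A)$, using self-adjointness of $\nabla_{e_i}A$ and the Codazzi identity. Consequently $\left\langle {\nabla_{\partial_r}div(A),\partial_r} \right\rangle = Hess(Trace\,A)({\partial_r},{\partial_r})$. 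Since the gradient flow of a distance function is geodesic, $\nabla_{\partial_r}{\partial_r} = \tfrac{1}{2}\nabla\left|\nabla r\right|^2 = 0$, whence $Hess(Trace\,A)({\partial_r},{\partial_r}) = {\partial_r}.{\partial_r}.Trace(A)$ with no correction term. Thus these two contributions are equal and opposite and cancel, leaving precisely
\[0 = Trace\left( {A\circ hess^2(r)} \right) + {\partial_r}.(\Delta_A r) + Ric({\partial_r},A{\partial_r}) - \sum\nolimits_i\left\langle {T^{(\nabla_{\partial_r}A)}(e_i,{\partial_r}),e_i} \right\rangle,\]
which completes the argument. I expect the identity $div(A) = \nabla(Trace\,A)$ and the geodesic normalization $\nabla_{\partial_r}{\partial_r}=0$ to be the steps most worth stating explicitly, since without them the trace terms would not visibly cancel.
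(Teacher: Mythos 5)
Your proposal is correct and follows exactly the route the paper intends: the paper gives no explicit proof of Theorem~\ref{bochner3}, merely asserting it follows from Proposition~\ref{extendedbochner2}, and your specialization to $B=A$, $u=r$ with $|\nabla r|\equiv 1$, $T^A=0$, and the cancellation of $-\partial_r.\partial_r.Trace(A)$ against $\langle\nabla_{\partial_r}div(A),\partial_r\rangle$ via $div(A)=\nabla(Trace\,A)$ and $\nabla_{\partial_r}\partial_r=0$ is precisely the omitted verification. Your write-up in fact supplies more detail than the paper does, and all the steps check out.
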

To get the extended mean curvature comparison, we need to approximate the parameter $\sum\nolimits_i {\left\langle {{T^{\left( {{\nabla _{{\partial _r}}}A} \right)}}({\partial _r},{e_i}),{e_i}} \right\rangle } $. When this parameter is negative, a similar Riccati inequality follows, but when this parameter is positive it is more complicated. To solve this problem we estimate this parameter by $Hessf({\partial _r},{\partial _r})$ and adapt the approach of \cite{wei2007comparison}. So at first, we define the following.
\begin{definition}\label{Bakryidea} We define the continuous function ${F^A}$ as follows,
\[{F^A}(x): = \mathop {\max }\limits_{X \in {T_x}M,\left| X \right| = 1} \sum\nolimits_i {\left\langle {{T^{\left( {{\nabla _X}A} \right)}}({e_i},X),{e_i}} \right\rangle } (x).\]
Also we define the function ${f^A}$ as a smooth function which satisfy the following,
\begin{itemize}
\item[(*)]${F^A}{g_M} \le Hess\left( {{f^A}} \right)$, in the sense of quadratic forms, where ${g_M}$ is the metric tensor of the manifold $ M $.
\end{itemize}
\end{definition}
In the following we give some examples for the ${f^A}$.
\begin{example}For the following $ f^A=0 $.
\begin{itemize}
\item[a)]When ${\Sigma ^n} \subset {M^{n + 1}}(c)$ be a totally ubilical hypersurfaces, then ${\Delta _{{\nabla _{\nabla u}}A}}u = 0$, so $ F^A=0 $ thus $ f^A=0 $.
\item[b)]For a Codazzi tensor $ A $, if ${\nabla ^2}A = 0$ then $ f^A=0 $.
\end{itemize}
 \end{example}
\begin{example} If for any vector field $X,T,Z \in \Gamma (TM)$ with $\left| X \right| = \left| Y \right| = \left| Z \right| = 1$ we have$\left\| {\left( {{\nabla _X}{\nabla _Y}A} \right)Z} \right\|(x) \le K(x)$, then
${F^A}(x) \le 2nK(x)$. Let $ f $ be a radial function, we try to find some conditions to have $2nK(x) \le Hessf(X,X)$, where $\left| X \right| = 1$. Note that $Hessf(X,X) = f''(r) + f'(r)Hessr(X,X)$, let $f' > 0$, to find $ f $ its sufficient to have
\[\mathop {\sup }\limits_{x \in B(r)} 2nK(x) \le \left( {f'' + \frac{{h'}}{h}f'} \right),\]
where $ h $ is the solution of differential equation,
\[\left\{ {\begin{array}{*{20}{c}}
   {h'' - Gh = 0},  \\
   {h(0) = 0\,\,,\,h'(0) = 1},  \\
\end{array}} \right.\]
where ${\sec _{rad}} \le  - G$, see \cite{pigola2008vanishing}.
 \end{example}
Now we present the proof of Theorem \ref{Meancurvaturecomparison}.
\begin{proof}[\textbf{Proof of Theorem \ref{Meancurvaturecomparison}}]
For the first part, by assumption $  A $ is positive semi-definite, so for any smooth function $ u$
we have
\[Trace\left( {A \circ hes{s^2}\left( u \right)} \right) \ge {\textstyle{{{{({\Delta _A}u)}^2}} \over {(TraceA)}}},\]
and by restriction on $ A $ in Definition \ref{minmax}, we have  $\frac{1}{{\left( {n - 1} \right){\delta _n}}} \le \frac{1}{{Trace(A)}} \le \frac{1}{{\left( {n - 1} \right){\delta _1}}}$ so we get the corresponding Ricatti inequality.
\begin{equation}
0 \ge \frac{{{{\left( {{\Delta _A}r} \right)}^2}}}{{\left( {n - 1} \right){\delta _n}}} + {\partial _r}.({\Delta _A}r) + Ric({\partial _r},A{\partial _r}) - {\partial _r}.{\partial _r}.{f^A}.
\label{bocnerin}
\end{equation}
Let $\gamma (t)$ be a minimal geodesic from the point $x_0$. So on this geodesic,
\[0 \ge \frac{{{{\left( {{\Delta _A}r} \right)}^2}}}{{\left( {n - 1} \right){\delta _n}}} + ({\Delta _A}r)' + Ric\left( {\gamma '(t),A\gamma '(t)} \right) - {\left( {{f^A}(t)} \right)^{\prime \prime }}.\]
Let ${M_H^n}$ be the model space with constant sectional curvature $ H $, then
\[\frac{{{{({\Delta _H}r)}^2}}}{{n - 1}} + ({\Delta _H}r)' + (n - 1)H = 0.\]
We know that ${\Delta _H}r = \left( {n - 1} \right)\frac{{s{n'_H}(r)}}{{s{n_H}(r)}}$, where
\[s{n_H}(r) = \left\{ {\begin{array}{*{20}{c}}
   {\frac{1}{{\sqrt H }}\sin (\sqrt H r)\,\,\,\,\,\,\,\,\,\,\,\,\,\,\,\,H > 0,}  \\
   {r\,\,\,\,\,\,\,\,\,\,\,\,\,\,\,\,\,\,\,\,\,\,\,\,\,\,\,\,\,\,\,\,\,\,\,\,\,\,\,\,\,\,\,\,\,\,\,\,\,\,H = 0,}  \\
   {\frac{1}{{\sqrt { - H} }}\sinh (\sqrt { - H} r)\,\,\,\,\,\,H < 0.}  \\
\end{array}} \right.\]
By assumption, we have $\left( {n - 1} \right){\delta _n}H \le Ric({\partial _r},{A\partial _r})$. So,
\begin{equation}
{\left( {\frac{{{\Delta _A}r}}{{{\delta _n}}} - {\Delta _H}r} \right)^\prime } \le  - \left( {\frac{{{{({\Delta _A}r)}^2}}}{{\left( {n - 1} \right)\delta _n^2}} - \frac{{{{({\Delta _H}r)}^2}}}{{n - 1}}} \right) + \frac{1}{{{\delta _n}}}{\left( {{f^A}(t)} \right)^{\prime \prime }}.
\label{exm1}
\end{equation}
Using ( \ref{exm1} ) and computation give that
\begin{eqnarray*}
{{\left( {sn_H^2(r)\left( {\frac{{{\Delta _A}r}}{{{\delta _n}}} - {\Delta _H}r} \right)} \right)}^\prime }&=& 2s{{n'}_H}(r)s{n_H}(r)\left( {\frac{{{\Delta _A}r}}{{{\delta _n}}} - {\Delta _H}r} \right) + sn_H^2(r){{\left( {\frac{{{\Delta _A}r}}{{{\delta _n}}} - {\Delta _H}r} \right)}^\prime }
\\&\le&\frac{{ 2sn_H^2(r)}}{{( {n - 1} )}}( {{\Delta _H}r} )( {\frac{{{\Delta _A}r}}{{{\delta _n}}} - {\Delta _H}r} ) - sn_H^2(r)( {\frac{{{{({\Delta _A}r)}^2}}}{{( {n - 1} )\delta _n^2}} - \frac{{{{({\Delta _H}r)}^2}}}{{n - 1}}} )
\\&&+ \frac{{sn_H^2(r)}}{{{\delta _n}}}{{\left( {{f^A}(r)} \right)}^{\prime \prime }}
\\&=& \frac{{sn_H^2(r)}}{{\left( {n - 1} \right)}}\left( 2{\frac{{\left( {{\Delta _H}r} \right)\left( {{\Delta _A}r} \right)}}{{{\delta _n}}} - 2{{\left( {{\Delta _H}r} \right)}^2} - \frac{{{{({\Delta _A}r)}^2}}}{{\delta _n^2}} + {{\left( {{\Delta _H}r} \right)}^2}} \right)
\\&&+\frac{{sn_H^2(r)}}{{{\delta _n}}}{\left( {{f^A}(r)} \right)^{\prime \prime }}
\\&=& - \frac{{sn_H^2(r)}}{{\left( {n - 1} \right)}}{{\left( {\frac{{{\Delta _A}r}}{{{\delta _n}}} - {\Delta _H}r} \right)}^2} + \frac{{sn_H^2(r)}}{{{\delta _n}}}{{\left( {{f^A}(r)} \right)}^{\prime \prime }}
\\& \le& \frac{{sn_H^2(r)}}{{{\delta _n}}}{{\left( {{f^A}(r)} \right)}^{\prime \prime }}.
\end{eqnarray*}
By Proposition \ref{assymptotic} $\mathop {\lim }\limits_{r \to 0} sn_H^2(r)\left( {\frac{{{\Delta _A}r}}{{{\delta _n}}} - {\Delta _H}r} \right) = 0$. So by integration with respect to $ r $, we get,
\begin{eqnarray*}
\frac{1}{{{\delta _n}}}sn_H^2(r)\left( {{\Delta _A}r} \right)&\le&sn_H^2(r)\left( {{\Delta _H}r} \right) + \frac{1}{{{\delta _n}}}\int_0^r {sn_H^2(t){{\left( {{f^A}(t)} \right)}^{\prime \prime }}dt}\\
&=&sn_H^2(r)\left( {{\Delta _H}r} \right) + \frac{1}{{{\delta _n}}}sn_H^2(r){{\left( {{f^A}(r)} \right)}^\prime } - \frac{1}{{{\delta _n}}}\int_0^r {{{\left( {sn_H^2(t)} \right)}^\prime }{{\left( {{f^A}(t)} \right)}^\prime }dt} .
\end{eqnarray*}
So by definition of ${\Delta _{A,{f^A}}}$ in \ref{ricci} one has,
\[\frac{1}{{{\delta _n}}}sn_H^2(r)\left( {{\Delta _{A,{f^A}}}(r)} \right) \le sn_H^2(r)\left( {{\Delta _H}r} \right) - \frac{1}{{{\delta _n}}}\int_0^r {{{\left( {sn_H^2(t)} \right)}^\prime }{{\left( {{f^A}(t)} \right)}^\prime }dt} .\]
By applying integration by parts again,
\[\frac{1}{{{\delta _n}}}sn_H^2(r)\left( {{\Delta _{A,{f^A}}}(r)} \right) \le sn_H^2(r)\left( {{\Delta _H}r} \right) - \frac{1}{{{\delta _n}}}{f^A}(r){\left( {sn_H^2(r)} \right)^\prime } + \frac{1}{{{\delta _n}}}\int_0^r {{{\left( {sn_H^2(t)} \right)}^{\prime \prime }}{f^A}(t)dt.} \]
When $ H>0 $ by assumption, we have $r \le \frac{\pi }{{4\sqrt H }}$, so ${\left( {sn_H^2(t)} \right)^{\prime \prime }} \ge 0$, thus
\[\frac{1}{{{\delta _n}}}sn_H^2(r)\left( {{\Delta _{A,{f^A}}}(r)} \right) \le sn_H^2(r)\left( {{\Delta _H}r} \right) + \frac{{2K}}{{{\delta _n}}}{\left( {sn_H^2(r)} \right)^\prime }.\]
We know,
\[{\left( {sn_H^2(r)} \right)^\prime } = 2{\left( {sn_H^{}(r)} \right)^\prime }sn_H^{}(r) = \frac{2}{{n - 1}}\left( {{\Delta _H}r} \right)\left( {sn_H^2(r)} \right),\]
so,
\[\left( {{\Delta _{A,{f^A}}}(r)} \right) \le {\delta _n}\left( {1 + \frac{{4K}}{{{\delta _n}\left( {n - 1} \right)}}} \right)\left( {{\Delta _H}r} \right).\]
For the second part, from the condition on $Ri{c_{Trace(A)}}\left( {{\partial _r},A{\partial _r}} \right)$ we have,
\[{\left( {\frac{{{\Delta _A}}}{{{\delta _n}}} - {\Delta _H}r} \right)^\prime } \le  - \left( {\frac{{{{({\Delta _A}r)}^2}}}{{\left( {n - 1} \right)\delta _n^2}} - \frac{{{{({\Delta _H}r)}^2}}}{{n - 1}}} \right) + \frac{1}{{{\delta _n}}}{\left( {{f^A}(t) - Trace(A)} \right)^{\prime \prime }}.\]
With similar computation,
\[{\left( {sn_H^2(r)\left( {\frac{{{\Delta _A}r}}{{{\delta _n}}} - {\Delta _H}r} \right)} \right)^\prime } \le \frac{{sn_H^2(r)}}{{{\delta _n}}}{\left( {{f^A}(t) - Trace(A)} \right)^{\prime \prime }}.\]
Thus,
\begin{eqnarray*}
\frac{1}{{{\delta _n}}}sn_H^2(r)\left( {{\Delta _A}r} \right)&\le&sn_H^2(r)\left( {{\Delta _H}r} \right) + \frac{1}{{{\delta _n}}}\int_0^r {sn_H^2(t){{\left( {{f^A}(t) - Trace(A)} \right)}^{\prime \prime }}dt}
\\&=& sn_H^2(r)\left( {{\Delta _H}r} \right) + \frac{1}{{{\delta _n}}}sn_H^2(r){{\left( {{f^A}(r) - Trace(A)} \right)}^\prime }
\\&&-\frac{1}{{{\delta _n}}}\int_0^r {{{\left( {sn_H^2(t)} \right)}^\prime }{{\left( {{f^A}(t) - Trace(A)} \right)}^\prime }dt}.
\end{eqnarray*}
Note that $ A $ is a Codazzi tensor, so $divA = \nabla Trace(A)$, so
\begin{eqnarray*}
\frac{1}{{{\delta _n}}}sn_H^2(r)\left( {\left( {{\Delta _A}r} \right) + \left\langle {divA,{\partial _r}} \right\rangle  - {\partial _r}.{f^A}(r)} \right)&\le&sn_H^2(r)\left( {{\Delta _H}r} \right)
\\&& - \frac{1}{{{\delta _n}}}\int_0^r {{{\left( {sn_H^2(t)} \right)}^\prime }{{\left( {{f^A}(t) - Trace(A)} \right)}^\prime }dt} ,
\end{eqnarray*}
and ${L_A}r = \left\langle {divA,{\partial _r}} \right\rangle  + {\Delta _A}r$, thus,
\[\frac{1}{{{\delta _n}}}sn_H^2(r)\left( {{L_A}r - {\partial _r}.{f^A}(r)} \right) \le sn_H^2(r)\left( {{\Delta _H}r} \right) - \frac{1}{{{\delta _n}}}\int_0^r {{{\left( {sn_H^2(t)} \right)}^\prime }{{\left( {{f^A}(t) - Trace(A)} \right)}^\prime }dt} .\]
Similarly,
\begin{equation}\label{meancomp1}
{L_A}r \le {\delta _n}\left( {1 + \frac{{4\left( {K + K'} \right)}}{{{\delta _n}\left( {n - 1} \right)}}} \right)\left( {{\Delta _H}r} \right) + {\partial _r}.{f^A}(r).
\end{equation}
\end{proof}
\begin{remark}\label{largercomparison} When $ H>0 $, for $\frac{\pi }{{4\sqrt H }} \le r \le \frac{\pi }{{2\sqrt H }}$ we have
\[\int_0^r {{{\left( {sn_H^2(t)} \right)}^{\prime \prime }}{f^A}(t)dt}  \le K\left( {\int_0^{\frac{\pi }{{4\sqrt H }}} {{{\left( {sn_H^2(t)} \right)}^{\prime \prime }}dt} \, - \int_{\frac{\pi }{{4\sqrt H }}}^r {{{\left( {sn_H^2(t)} \right)}^{\prime \prime }}dt} \,\,\,} \right) = K\left( {\frac{2}{{\sqrt H }}\, - s{n_H}(2r)} \right).\]
So, when $\frac{\pi }{{4\sqrt H }} \le r \le \frac{\pi }{{2\sqrt H }}$,
\[\left( {{\Delta _{A,{f^A}}}(r)} \right) \le {\delta _n}\left( {1 + \frac{1}{{{\delta _n}}}\frac{{4K}}{{\left( {n - 1} \right)\sin \left( {2\sqrt H r} \right)}}} \right)\left( {{\Delta _H}r} \right).\]
This estimate will be used to prove the Meyer's theorem.
\end{remark}
\begin{cor} If $Trace(A)$ is constant and $Trace(A)H \le Ric\left( {{\partial _r},A{\partial _r}} \right)$, then
\begin{eqnarray*}
{\Delta _{A,{f^A}}}r &\le& \frac{{Trace(A)}}{{n - 1}}\left( {1 + \frac{{4K}}{{Trace(A)}}} \right){\Delta _H}r,\\
{L_A}r &\le& \frac{{Trace(A)}}{{n - 1}}\left( {1 + \frac{{4K}}{{Trace(A)}}} \right)\left( {{\Delta _H}r} \right) + {\partial _r}.{f^A}(r).
\end{eqnarray*}
\end{cor}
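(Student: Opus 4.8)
The plan is to re-run the proof of Theorem~\ref{Meancurvaturecomparison} almost verbatim, replacing the constant $\delta_n$ everywhere by $\tau:=\frac{Trace(A)}{n-1}$; the constancy of $Trace(A)$ is exactly the hypothesis that makes this substitution legitimate. First I would return to the identity of Theorem~\ref{bochner3} together with the pointwise Cauchy--Schwarz bound $Trace(A\circ hess^2 r)\ge \frac{(\Delta_A r)^2}{Trace(A)}$, but rather than discard information through the estimate $\frac{1}{Trace(A)}\ge\frac{1}{(n-1)\delta_n}$ used to obtain~(\ref{bocnerin}), I would keep $Trace(A)=(n-1)\tau$ in the denominator. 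Along a minimal geodesic $\gamma$ this gives
\[0\ge \frac{(\Delta_A r)^2}{(n-1)\tau}+(\Delta_A r)'+Ric(\gamma',A\gamma')-(f^A)'',\]
which is formally identical to~(\ref{bocnerin}) after the substitution $\delta_n\mapsto\tau$.

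Next I would note that the single hypothesis $Trace(A)H\le Ric(\partial_r,A\partial_r)$ is precisely $(n-1)\tau H\le Ric(\partial_r,A\partial_r)$, so it occupies the role of $(n-1)\delta_n H\le Ric(\partial_r,A\partial_r)$ with $\delta_n$ replaced by $\tau$. Consequently the passage to~(\ref{exm1}), the multiplication by $sn_H^2(r)$, the use of the asymptotics of Proposition~\ref{assymptotic}, and the two integrations by parts are unchanged and yield $\Delta_{A,f^A}r\le\tau\bigl(1+\frac{4K}{\tau(n-1)}\bigr)\Delta_H r$. Since $\tau(n-1)=Trace(A)$ and $\tau=\frac{Trace(A)}{n-1}$, this is exactly the first asserted inequality.

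For the $L_A r$ estimate I would specialize part~(b) of Theorem~\ref{Meancurvaturecomparison}. Because $Trace(A)$ is constant, $Hess(Trace(A))=0$, so $Ric_{Trace(A)}(\partial_r,A\partial_r)=Ric(\partial_r,A\partial_r)$ and the hypothesis $Trace(A)H\le Ric(\partial_r,A\partial_r)$ again suffices. Moreover the Codazzi identity $divA=\nabla Trace(A)$ forces $divA=0$, so $L_A r=\Delta_A r$ and every term carrying a derivative of $Trace(A)$---the origin of the $K'$ contribution in~(\ref{meancomp1})---drops out. Running the part~(b) computation with these simplifications and $\delta_n\mapsto\tau$ produces $L_A r\le\frac{Trace(A)}{n-1}\bigl(1+\frac{4K}{Trace(A)}\bigr)\Delta_H r+\partial_r.f^A(r)$.

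The one step that genuinely requires the constancy of $Trace(A)$---and which I expect to be the crux---is the justification that $\tau$ may be pulled through the differentiation of $sn_H^2(r)\bigl(\frac{\Delta_A r}{\tau}-\Delta_H r\bigr)$ and the ensuing $r$-integration as a true constant, so that no spurious derivative-of-trace terms intrude; everything else is a mechanical relabeling of the earlier argument. The sign conditions on $(sn_H^2)''$ for $H>0$ with $r\le\frac{\pi}{4\sqrt H}$ are identical to those in Theorem~\ref{Meancurvaturecomparison}, so both displayed inequalities follow at once.
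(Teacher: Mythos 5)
Your proposal is correct and is exactly the argument the paper intends: the corollary is stated without proof, and the evident route is to rerun the proof of Theorem \ref{Meancurvaturecomparison} with the constant $\tau=\frac{Trace(A)}{n-1}$ in place of $\delta_n$ (keeping $Trace(A)$ in the Cauchy--Schwarz denominator instead of weakening it to $(n-1)\delta_n$), and to observe that constancy of $Trace(A)$ gives $Hess(Trace(A))=0$ and, via the Codazzi identity, $divA=\nabla Trace(A)=0$, so $L_Ar=\Delta_Ar$ and the $K'$ term disappears. You also correctly isolate the one place where constancy is essential, namely pulling $\tau$ through the differentiation and integration in the Riccati comparison.
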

Now we prove the  Meyer's theorem by using the so called excess functions. In fact this idea is used in \cite{petersen1998integral,wei2007comparison,wu2018myers}.  By adapting this approach we obtain the compactness result by applying the extended mean curvature Theorem \ref{Meancurvaturecomparison} for the elliptic differential operator ${{\Delta _{A,{f^A}}}}$ to the excess function.
\begin{proof}[\textbf{Proof of Theorem \ref{Meyer's theorem} (Meyer's theorem)}]We prove each item.
For (a) let $ p,q $ are two points in $ M $ with $dist\left( {p,q} \right) \ge \frac{\pi }{{\sqrt H }}$. Define $B: = dist\left( {p,q} \right) - \frac{\pi }{{\sqrt H }}$, ${r_1}(x): = dist\left( {p,x} \right)$ and ${r_2}(x): = dist\left( {q,x} \right)$. Also ${e_{p,q}}(x)$ be the excess function associated to the points $ p,q $. By triangle inequality, we have
${e_{p,q}}(x) \ge 0$ and ${e_{p,q}}\left( {\gamma (t)} \right) = 0$, where $ \gamma $ is the minimal geodesic joining $ p,q $. Hence ${\Delta _{A,{f^A}}}e\left( {\gamma (t)} \right) \ge 0$ in the barrier sense. Let ${y_1} = \gamma \left( {\frac{\pi }{{2\sqrt H }}} \right)$ and ${y_2} = \gamma \left( {B + \frac{\pi }{{2\sqrt H }}} \right)$. So ${r_i}\left( {{y_i}} \right) = \frac{\pi }{{2\sqrt H }} , i=1,2$. Remark \ref{largercomparison} concludes

\begin{equation}
{\Delta _{A,{f^A}}}({r_i})({y_i}) \le 2K\sqrt H.
\label{meyer1}
\end{equation}
Also from (\ref{bocnerin}) and the assumption on $Ric\left( {{\partial _r},A{\partial _r}} \right)$ we conclude,
\[{\Delta _{A,{f^A}}}r \le {\Delta _{A,{f^A}}}{r_0} - (n - 1){\delta _n}H\left( {r - {r_0}} \right).\]
Thus,
\begin{equation}
{\Delta _{A,{f^A}}}{r_1}({y_2}) \le {\Delta _{A,{f^A}}}{r_1}({y_1}) - B(n - 1){\delta _n}H.
\label{meyer2}
\end{equation}
So by (\ref{meyer1}) and (\ref{meyer2}) we have
\[0 \le {\Delta _{A,{f^A}}}\left( {{e_{p,q}}} \right)({y_2}) = {\Delta _{A,{f^A}}}{r_1}({y_2}) + {\Delta _{A,{f^A}}}{r_2}({y_2}) \le 4K\sqrt H  - B(n - 1){\delta _n}H,\]
so $B \le \frac{{4K}}{{{\delta _n}(n - 1)\sqrt H }}$ and
\[dist(p,q) \le \frac{\pi }{{\sqrt H }} + \frac{{4K}}{{{\delta _n}(n - 1)\sqrt H }}.\]
For the second part, Let $\bar M$ be the universal cover of $  M $ and $\Phi :\bar M \to M$ be the covering map. Then we define $\bar A: = {\Phi ^*}A = {\left( {{\Phi _*}} \right)^{ - 1}} \circ A \circ {\Phi _*}$. Note that for any vector field $X \in \Gamma \left( {TM} \right)$ with $\left| X \right| = 1$, we have
\[\sum\nolimits_i {\left\langle {{T^{\left( {{\nabla _{{\Phi ^*}X}}\bar A} \right)}}\left( {{\Phi ^*}{e_i},{\Phi ^*}X} \right),{\Phi ^*}{e_i}} \right\rangle }  = \sum\nolimits_i {\left\langle {{T^{\left( {{\nabla _X}A} \right)}}\left( {{e_i},X} \right),{e_i}} \right\rangle }  \circ \Phi  \le Hess{f^A}\left( {X,X} \right).\]
So by defining ${f^{\bar A}}: = {f^A} \circ \Phi $, we have
\[\sum\nolimits_i {\left\langle {{T^{\left( {{\nabla _{{\Phi ^*}X}}\bar A} \right)}}\left( {{\Phi ^*}{e_i},{\Phi ^*}X} \right),{\Phi ^*}{e_i}} \right\rangle }  \le Hess{f^{\bar A}}\left( {{\Phi ^*}X,{\Phi ^*}X} \right).\]
Also
\[\left| {{f^{\bar A}}} \right| = \left| {{f^A}} \right|\,\,\,\,\,\,\,\,\,and\,\,\,\,\,\,\bar Ric\left( {{\Phi ^*}X,\bar A\left( {{\Phi ^*}X} \right)} \right) = Ric\left( {X,AX} \right).\]
Thus the universal cover $ {\bar M}$  is compact and consequently $ M $ has finite fundamental group.
\end{proof}
To extended the inequalities of extended mean curvature comparison theorem on all of $ M $, we need the three definitions of inequalities in weak senses. The first one is the weak inequality in barrier sense which is originally defined  by Calabi \cite{calabi1958extension} in 1958.
\begin{definition}\label{barrier} Let $f \in {C^0}(M)$, $ X $ be a smooth vector field and $ A $ is bounded below as in Definition \ref{minmax}. We say that ${\Delta _{A,X}}u \ge v$ in the barrier sense, if for any point $ x_0 $ in $ M $ and any $\varepsilon> 0$ there exists a function ${u_{{x_0},\varepsilon }}$  which is called a support function and a neighborhood ${U_{{x_0},\varepsilon }}$ of $x_0$, such that the following properties are satisfied,
\begin{itemize}
\item[a)]${u_{{x_0},\varepsilon }} \in {C^2}({U_{{x_0},\varepsilon }})$,
\item[b)]${u_{{x_0},\varepsilon }}({x_0}) = u({u_{{x_0}}})$ and $u(x) \ge {u_{{x_0},\varepsilon }}(x)$ for all $x \in {U_{{x_0},\varepsilon }}$,
\item[c)]${\Delta _{A,X}}{u_{{x_0},\varepsilon }}({x_0}) \ge v - \varepsilon .$
\end{itemize}
Similarly, we say ${\Delta _{A,X}}u \le v$ in barrier sense, if ${\Delta _{A,X}}\left( { - u} \right) \ge  - v$ in the sense just defined.
\end{definition}

By \cite{calabi1958extension}  we know, If $\gamma $ is a minimal geodesic from $ p $ to $ q $, then for all $\varepsilon  > 0$ the function ${r_{q,\varepsilon }}(x) = \varepsilon  + dist\left( {\gamma (\varepsilon ),x} \right)$ is an upper barrier for the distance function $r(x) = dist\left( {p,x} \right)$. So we have the following inequality in barrier sense for the distance function. The following lemma is used for the extension of Quantitative Maximal Principle of Abresch and Gromoll in Proposition \ref{Quantitative Maximal Principle} and to get the same inequality in distribution sense in Lemma \ref{weakinequalitydis}.
\begin{lem}\label{weakinequalityba} Let $ p $ be the fixed point in $ M $ and $r(x) = dist\left( {p,x} \right)$.
If ${\Delta _{A,X}}(r) \le \alpha (r)$ point-wise on $M\backslash cut(p)$ for a continuous function $\alpha $ and  $v \in {C^2}(\mathbb{R})$ be non-negative, $u(x) = v(r(x))$ and suppose $v' \ge 0$, then
\begin{itemize}
\item[a)] If  $v' \ge 0$, then ${\Delta _{A,X}}(u) \le \left| {\nabla r} \right|_A^2v''(r) + \alpha (r)v'(r)$ in barrier sense on $ M $.
\item[b)]If  $v' \le 0$, then ${\Delta _{A,X}}(u) \ge \left| {\nabla r} \right|_A^2v''(r) + \alpha (r)v'(r)$ in barrier sense on $ M $.
\end{itemize}
Also a same results is valid for ${L_{A,f}}(r)$.
\end{lem}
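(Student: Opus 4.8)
The plan is to first establish the inequality pointwise on the smooth locus $M\setminus cut(p)$ by a direct chain-rule computation for the operator, and then to propagate it across the cut locus in the barrier sense using Calabi's upper-barrier construction \cite{calabi1958extension}. Writing $u=v(r)$ we have $\nabla u=v'(r)\nabla r$, and since $\nabla_{e_i}\nabla u=v''(r)\langle\nabla r,e_i\rangle\nabla r+v'(r)\nabla_{e_i}\nabla r$, the definition of $\Delta_A$ in \ref{ricci} gives, wherever $r$ is smooth,
\[
\Delta_{A,X}u=v''(r)\,|\nabla r|_A^2+v'(r)\,\Delta_{A,X}r,
\]
where $|\nabla r|_A^2=\langle\nabla r,A\nabla r\rangle$. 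On $M\setminus cut(p)$ the hypothesis $\Delta_{A,X}r\le\alpha(r)$ then yields, when $v'\ge0$, the inequality $\Delta_{A,X}u\le|\nabla r|_A^2 v''(r)+\alpha(r)v'(r)$, and when $v'\le0$ the reverse inequality; the sign of $v'$ controls the direction because we multiply $\Delta_{A,X}r\le\alpha(r)$ through by $v'$. At such a smooth point $u$ itself serves as a support function, so both (a) and (b) already hold in the barrier sense there.

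Next, for $x_0\in cut(p)$ I would invoke Calabi's barrier. Fixing a minimal geodesic $\gamma$ from $p$ to $x_0$, the function $r_\varepsilon(x)=\varepsilon+dist(\gamma(\varepsilon),x)$ is smooth in a neighborhood of $x_0$ and satisfies $r_\varepsilon\ge r$ with equality at $x_0$, as recalled before the statement. For part (a), since $v'\ge0$ makes $v$ non-decreasing, $u_\varepsilon:=v(r_\varepsilon)$ satisfies $u_\varepsilon\ge u$ with equality at $x_0$, so $u_\varepsilon$ is an admissible upper support function; for part (b), $v'\le0$ makes $v$ non-increasing, hence $u_\varepsilon\le u$ with equality at $x_0$, giving a lower support function. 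Applying the smooth formula above to $r_\varepsilon$ and using the comparison inequality for the distance from $\gamma(\varepsilon)$ (which holds under the very curvature assumption that produced the hypothesis), I obtain at $x_0$ a bound of the form $|\nabla r_\varepsilon|_A^2 v''(r_\varepsilon)+\alpha(r_\varepsilon-\varepsilon)v'(r_\varepsilon)$. Because $r_\varepsilon(x_0)=r(x_0)$ and the minimal geodesics from $p$ and from $\gamma(\varepsilon)$ coincide at $x_0$, we have $\nabla r_\varepsilon(x_0)=\nabla r(x_0)$, hence $|\nabla r_\varepsilon|_A^2(x_0)=|\nabla r|_A^2(x_0)$.

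Finally I would let $\varepsilon\to0$: by continuity of $\alpha$, $v'$ and $v''$ the argument $\alpha(r_\varepsilon-\varepsilon)$ converges to $\alpha(r(x_0))$, so the estimate $\Delta_{A,X}u_\varepsilon(x_0)\le|\nabla r|_A^2 v''(r)+\alpha(r)v'(r)$ (respectively $\ge$ for part (b)) holds up to an error that vanishes with $\varepsilon$. This is precisely property (c) of Definition \ref{barrier}, so the barrier-sense inequalities hold at every $x_0$. The case of $L_{A,f}$ is identical: writing $L_{A,f}=\Delta_{A,Y}$ with $Y=A\nabla f-div(A)$, or expanding directly to $L_{A,f}(v\circ r)=v''|\nabla r|_A^2+v'L_{A,f}r$, the same chain rule and the same barrier argument apply verbatim.

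The main obstacle is the passage across the cut locus. One must verify that $r_\varepsilon$ is a genuine one-sided barrier with the ordering dictated by the sign of $v'$ (non-decreasing $v$ transports an upper barrier for $r$ to an upper barrier for $u$, non-increasing $v$ reverses it), and that the shifted-base-point comparison together with the continuity of $\alpha$ closes the $\varepsilon\to0$ limit so that the $-\varepsilon$ loss in the argument of $\alpha$ is absorbed into the $\varepsilon$-slack allowed by Definition \ref{barrier}.
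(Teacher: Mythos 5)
Your proposal is correct and follows exactly the route the paper intends: the chain rule $\Delta_{A,X}(v\circ r)=v''\,|\nabla r|_A^2+v'\,\Delta_{A,X}r$ on $M\setminus cut(p)$, then Calabi's upper barrier $r_\varepsilon=\varepsilon+dist(\gamma(\varepsilon),\cdot)$ at cut points, with the monotonicity of $v$ deciding whether $v(r_\varepsilon)$ is an upper or lower support function (the paper states the lemma without proof, relying on precisely this remark about \cite{calabi1958extension}). The only point worth keeping in view is the one you already flag: bounding $\Delta_{A,X}r_\varepsilon(x_0)$ requires the mean-curvature inequality for the distance from the shifted base point $\gamma(\varepsilon)$, which is not literally contained in the hypothesis ``$\Delta_{A,X}r\le\alpha(r)$ off $cut(p)$'' but holds in every application because $\alpha$ comes from the curvature assumption of Theorem \ref{Meancurvaturecomparison}, exactly as the paper implicitly assumes.
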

 The second, is a weaker version and defined in the sense of viscosity which was introduced by Crandall and Lions in \cite{crandall1983viscosity}.

\begin{definition}\label{viscosity} Let $h\in {C^0}(M)$, we say that ${L_{A,f}}h(p) \ge a$ in
the viscosity sense, if for any $\phi  \in {C^2}\left( U \right)$ and neighborhood $ U $ of $ q $ with$\left( {h - \phi } \right)(q) = \mathop {\inf }\limits_U \left( {h - \phi } \right)$, we have ${L_{A,f}}h \le a$.
 \end{definition}
 By Lemma \ref{maximumpoint}, it is clear that barrier sub solutions are viscosity sub solutions.
The last very useful notion of inequality is the one in the sense of distribution.
\begin{definition}\label{distribution} For continuous functions $ u,v $  on manifold $ M $, we say ${L_{A,f}}(u) \le v$ in weak or distribution sense, if $\int_M {u{L_{A,f}}(\phi )} dvo{l_g} \le \int_M \phi  vvo{l_g}$ for each $\phi  \in Li{p_c}(M)$.
\end{definition}
When $ A $ is bounded from below as in Definition \ref{minmax}, it is known that, if $ u $ is a viscosity solution of  ${L_{A,f}}u \le h$ on $ M $ then, it is also a distribution solution and vice verse ( see \cite[theorem 3.2.11 ]{hormander1994notions} or \cite{lions1983optimal} ).
The following lemma is used for proving the monotonicity result for volume of geodesic balls in Theorems \ref{extended volume1}, \ref{extended volume comparison}.
\begin{lem}\label{weakinequalitydis}Let ${L_{A,f}}(r) \le \alpha (r)$ point-wise on $M\backslash cut(p)$ for a continuous function $\alpha $ and $v \in {C^2}(\mathbb{R})$ be non-negative. Let $u(x) = v(r(x))$, suppose $v' \ge 0$,  $v'' = 0$, then ${L_{A,f}}u \le v'\alpha (r)$  in distribution sense on $ M $.
\end{lem}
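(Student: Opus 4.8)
The plan is to promote the pointwise bound ${L_{A,f}}(r)\le\alpha(r)$, which holds only on $M\setminus\mathrm{cut}(p)$, to a weak inequality valid on all of $M$, by chaining together the three notions of weak subsolution assembled just above. A useful preliminary observation is that the hypotheses $v'\ge 0$ and $v''=0$ force $v$ to be affine in $r$; since $L_{A,f}$ is linear and annihilates constants, one has $L_{A,f}u=v'\,L_{A,f}r$, so the real content of the lemma is that the pointwise estimate for $r$ survives the passage across the cut locus, scaled by the nonnegative constant $v'$.

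First I would apply Lemma \ref{weakinequalityba}(a). Because $v'\ge 0$, it gives ${L_{A,f}}u\le |\nabla r|_A^2\,v''(r)+\alpha(r)v'(r)$ in the barrier sense on $M$, and the assumption $v''=0$ kills the first term, leaving ${L_{A,f}}u\le v'\alpha(r)$ in the barrier sense everywhere. The reason this is legitimate even at points of $\mathrm{cut}(p)$ is the Calabi construction recalled before the lemma: the upper barrier $r_{q,\varepsilon}(x)=\varepsilon+\mathrm{dist}(\gamma(\varepsilon),x)$ supplies a $C^2$ support function for $r$ from above at any point, so condition (c) of Definition \ref{barrier} can be verified there. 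Next I would convert this into a distributional statement: by Lemma \ref{maximumpoint} a barrier subsolution is a viscosity subsolution, so ${L_{A,f}}u\le v'\alpha(r)$ holds in the viscosity sense; and since $A$ is bounded below as in Definition \ref{minmax}, the equivalence of viscosity and distribution (sub)solutions for divergence-type operators (\cite[Theorem 3.2.11]{hormander1994notions}, \cite{lions1983optimal}) delivers the inequality in the sense of Definition \ref{distribution}, which is precisely the assertion.

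The main obstacle is the cut locus itself. The pointwise Riccati-type estimate only holds on $M\setminus\mathrm{cut}(p)$, and a direct integration of $\int_M u\,L_{A,f}\phi\,dvol_g$ against a test function would in principle leave a boundary contribution along $\mathrm{cut}(p)$. Routing through the barrier and viscosity notions is exactly what disposes of this term without any computation: the favorable sign of the ``corner'' that $r$ develops at the cut locus is already encoded in the Calabi upper barrier, so once the barrier inequality is in hand the two equivalence theorems finish the argument automatically. A more hands-on alternative would integrate over the star-shaped segment domain and check directly that the $\mathrm{cut}(p)$ boundary term is non-positive, but reusing the weak-inequality lemmas already in place is cleaner and is what I would adopt here.
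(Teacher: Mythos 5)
Your proposal follows exactly the paper's route: apply Lemma \ref{weakinequalityba} to obtain the inequality in the barrier sense, pass to the viscosity sense via Lemma \ref{maximumpoint}, and then invoke the equivalence of viscosity and distribution subsolutions from \cite{hormander1994notions} and \cite{lions1983optimal} using the lower bound on $A$. The extra remarks on the Calabi upper barrier and the sign of the corner at the cut locus are just a fuller explanation of why the barrier step is legitimate, so the argument is correct and essentially identical to the paper's proof.
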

\begin{proof}

 By Lemma \ref{weakinequalityba}, we know the inequality is valid in barrier sense for ${L_{A,f}}(r)$. So it is valid in viscosity sense and by \cite{lions1983optimal} or \cite{hormander1994notions} is valid in distribution sense.
\end{proof}
\section{extended volume growth}\label{sec5}
In this section, we get some results about the growth of extended volume. We define the extended volume as follows.
\begin{definition}\label{Dev} Let $ M $ be a Riemannian manifold, $T \subseteq M$ which $\partial T$ has measure zero, $ A $ be a self adjoint (1,1)-tensor field on it, $ x_0 $ be e fixed point and $ r(x):=dist(x_0,x) $. We define the extended volume of geodesic ball $ B(x_0,R) $ as $vol^A(B(x_0,R): = \int_{B(x_0,R)} {\left\langle {A\nabla r,\nabla r} \right\rangle } dvo{l_g}$.
\end{definition}
 We compare this volume by the usual volume of geodesic balls in model spaces ${\mathbb{R}^n}\,,\,{\mathbb{S}^n}$ and ${\mathbb{H}^n}$. First, we give the following result.
\begin{theorem}\label{extended volume1}
Let $ M $ be a Riemannian manifold, $ x_0 $ be a fixed point and $ r(x):=dist(x_0,x) $, $ A $ be a self adjoint (1,1)-tensor field on it.  Assume ${L_A}r \le \frac{1}{C}\frac{{s{{n'}_H}(r)}}{{s{n_H}(r)}} + {\partial _r}.{f^A}(r)$ point-wise on $M\backslash cut(x_0)$ and the following condition holds,
\begin{equation}
\frac{1}{{C\left( {m  - 1} \right)}} \le \left\langle {\nabla r,A\nabla r} \right\rangle.
\label{volumecondition}
\end{equation}
Then
\begin{itemize}
\item[a)]For any $R$, we have,
\[\frac{d}{{dR}}\left( {\frac{{vol^A(B(x_0,R))}}{{vol_H^mB(R)}}} \right) \le \frac{1}{{{\delta _n}}}\frac{{{c_m}{R^{}}sn_H^{m - 1}(R)}}{{{{\left( {vol_H^mB(R)} \right)}^{1 + 1/p}}}}{\left( {\frac{{vol^A\left( {B(x_0,R)} \right)}}{{vol_H^mB(R)}}} \right)^{1 - 1/p}}{\left\| {\left( {\left( {\delta _n^{1/p}\left| {\nabla {f^A}} \right|} \right)} \right)} \right\|_{p,R}}.\]
\item[b)]For any $0 < R $, we get
\[{\left( {\frac{{vol^A(B(x_0,R))}}{{vol_H^mB(R)}}} \right)^{1/p}} - {\left( {\frac{{vol^A(B(x_0,R))}}{{vol_H^mB(r)}}} \right)^{1/p}} \le \frac{{{c_m}}}{{p{\delta _n}}}{\left\| {\left( {\delta _n^{1/p}\left( {\left| {\nabla {f^A}} \right|} \right)} \right)} \right\|_{p,R}}\int_r^R {\frac{{tsn_H^{m - 1}(t)}}{{{{\left( {vol_H^mB(t)} \right)}^{1 + 1/p}}}}dt.} \]
\item[c)]For any $0 < {r_1} \le {r_2} \le {R_1} \le {R_2} $, we have the following extended volume comparison result for annular regions,
\[\begin{array}{l}
 {\left( {\frac{{vol^A(B({x_0},{r_2},{R_2}))}}{{vol_H^mB({r_2},{R_2})}}} \right)^{1/p}} - {\left( {\frac{{vol^A(B({x_0},{r_1},{R_1}))}}{{vol_H^mB({r_1},{R_1})}}} \right)^{1/p}} \\
 \,\,\,\,\,\,\,\,\,\,\,\,\,\,\,\,\,\,\,\,\, \le \frac{{{c_m}}}{{p{\delta _n}}}{\left\| {\left( {\delta _n^{1/p}\left( {\left| {\nabla {f^A}} \right|} \right)} \right)} \right\|_{p,R}} \times \left[ {\int_{{R_1}}^{{R_2}} {\frac{{tsn_H^{m - 1}(t)}}{{{{\left( {vol_H^mB({r_2},t)} \right)}^{1 + 1/p}}}}} dt + \int_{{r_1}}^{{r_2}} {\frac{{{R_1}sn_H^{m - 1}({R_1})}}{{{{\left( {vol_H^mB(t,{R_1})} \right)}^{1 + 1/p}}}}dt} } \right] ,\\
 \end{array}\]
\end{itemize}
where ${vol_{H}^m (R)}$ is the volume of $ B(o,R)$ in the ${m }$-dimensional simply connected complete manifold with constant sectional curvature $ H $ and
\[{\left\| {\left( {\delta _n^{1/p}\left( {\left| {\nabla {f^A}} \right|} \right)} \right)} \right\|_{p,R}}: = \int_{ B({x_0},R)} {\delta _n^{}{{\left( {\left| {\nabla {f^A}} \right|} \right)}^p}} dvo{l_g}.\]
\end{theorem}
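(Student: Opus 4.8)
The plan is to run a Bishop--Gromov type argument for the extended volume $vol^A$, in which the weight $\langle A\partial_r,\partial_r\rangle$ is exactly the one dictated by the divergence structure of $L_A$, and to read all three statements off a single differential inequality for the weighted area of geodesic spheres. First I would pass to geodesic polar coordinates at $x_0$, writing $dvol_g=\mathcal A(r,\theta)\,dr\,d\theta$ on $M\backslash cut(x_0)$ and introducing the extended area density $\mathcal A^A(r,\theta):=\langle A\partial_r,\partial_r\rangle\,\mathcal A(r,\theta)$, so that $vol^A(B(x_0,R))=\int\!\!\int\mathcal A^A\,dr\,d\theta$ and, by the coarea formula, the weighted sphere area $S^A(r):=\int_{\partial B(x_0,r)}\langle A\partial_r,\partial_r\rangle\,dA$ satisfies $\frac{d}{dr}vol^A(B(x_0,r))=S^A(r)$. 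Decomposing $A\partial_r=\langle A\partial_r,\partial_r\rangle\partial_r+(A\partial_r)^{\perp}$ and using $\partial_r\log\mathcal A=\Delta r$ gives the pointwise identity
\[
L_A r=\frac{1}{\mathcal A}\,\partial_r\big(\langle A\partial_r,\partial_r\rangle\,\mathcal A\big)+div\big((A\partial_r)^{\perp}\big),
\]
where the last term is the divergence, along $\partial B(x_0,r)$, of the part of $A\partial_r$ tangent to the geodesic sphere.

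Next I would turn the hypotheses into a one-sided bound for $\partial_r\big(\mathcal A^A/sn_H^{m-1}\big)$. Feeding $L_A r\le\frac1C\frac{sn'_H(r)}{sn_H(r)}+\partial_r f^A$ into the identity above and differentiating $\mathcal A^A/sn_H^{m-1}$, the model terms combine into $\mathcal A\frac{sn'_H}{sn_H}\big(\frac1C-(m-1)\langle A\partial_r,\partial_r\rangle\big)$, which is $\le 0$ precisely because condition (\ref{volumecondition}) gives $(m-1)\langle A\partial_r,\partial_r\rangle\ge\frac1C$ (together with $sn'_H/sn_H\ge 0$, which for $H>0$ is where the radius restriction enters). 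What survives is
\[
\partial_r\!\left(\frac{\mathcal A^A}{sn_H^{m-1}}\right)\le\frac{\mathcal A}{sn_H^{m-1}}\Big(\partial_r f^A-div\big((A\partial_r)^{\perp}\big)\Big).
\]
Integrating this over $\theta\in S^{n-1}$ kills the tangential divergence, since $\int_{\partial B(x_0,r)}div\big((A\partial_r)^{\perp}\big)\,dA=0$ on a closed geodesic sphere; the contributions of $cut(x_0)$ are absorbed by passing to the distributional inequality of Lemma \ref{weakinequalitydis}, whose sign is favorable. This yields the \emph{key differential inequality}
\[
\frac{d}{dr}\!\left(\frac{S^A(r)}{sn_H^{m-1}(r)}\right)\le\frac{1}{sn_H^{m-1}(r)}\int_{\partial B(x_0,r)}\partial_r f^A\,dA.
\]

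From here the three statements are bookkeeping. Writing $q(R)=vol^A(B(x_0,R))/vol_H^mB(R)$ and $\Phi(R)=S^A(R)/\big(c_m sn_H^{m-1}(R)\big)$, one has $q'(R)=\frac{c_m sn_H^{m-1}(R)}{vol_H^mB(R)}\big(\Phi(R)-q(R)\big)$, and the defect $\Phi-q=\frac{1}{vol_H^mB(R)}\int_0^R(\Phi(R)-\Phi(t))\,c_m sn_H^{m-1}(t)\,dt$ is controlled by integrating the key inequality, the clean part reproducing the usual monotonicity ($\Phi-q\le 0$ when $f^A=0$). Estimating $\int_{\partial B}\partial_r f^A\,dA\le\int_{\partial B}|\nabla f^A|\,dA$, applying H\"older's inequality on each geodesic sphere, then the coarea formula to turn the resulting spherical integrals into the solid $L^p$-norm over $B(x_0,R)$ and the bounds $\delta_1\le\langle A\partial_r,\partial_r\rangle\le\delta_n$ to pass between weighted and unweighted quantities, produces the power $q^{1-1/p}$, the factor $(vol_H^mB(R))^{-1/p}$, the $\delta_n$-weights and the factor $R$ coming from the radial integration, which is exactly part (a). Dividing part (a) by $q^{1-1/p}$ gives $\frac{d}{dR}q^{1/p}\le\frac{c_m}{p\delta_n}\frac{R\,sn_H^{m-1}(R)}{(vol_H^mB(R))^{1+1/p}}\big\|\delta_n^{1/p}|\nabla f^A|\big\|_{p,R}$, and integrating in $R$ (using monotonicity of $\|\cdot\|_{p,R}$) yields part (b); part (c) is the same computation carried out on annuli, integrating the monotonicity of $S^A/(c_m sn_H^{m-1})$ separately in the inner and the outer radial variable, which accounts for the two integrals in the stated bound.

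The main obstacle is the tangential term $div\big((A\partial_r)^{\perp}\big)$. Unlike the weighted Laplacian, for which $\partial_r\log(e^{-f}\mathcal A)=\Delta_f r$ holds radially and produces a clean ODE along each geodesic, the operator $L_A$ admits no pointwise radial density identity, because $A\partial_r$ need not be radial; the spherical divergence obstructs any ray-by-ray monotonicity and disappears only after integration over the full geodesic sphere. This is what forces the whole comparison to be run at the level of the sphere-integrated quantity $S^A$, and it is the reason the weight in $vol^A$ must be $\langle A\partial_r,\partial_r\rangle$, so that the divergence theorem applies. The remaining delicate points are the H\"older/coarea bookkeeping needed to reproduce the precise constants $c_m,\delta_n$ and the factor $R$, and the verification that (\ref{volumecondition}) is exactly the threshold that cancels the model term and fixes the effective dimension $m$ of the comparison space.
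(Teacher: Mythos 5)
Your proposal is correct in substance and arrives at exactly the same key inequality as the paper, namely the monotonicity defect
\[
\frac{vol^A(\partial B(x_0,R))}{sn_H^{m-1}(R)}-\frac{vol^A(\partial B(x_0,r))}{sn_H^{m-1}(r)}\le\int_{B(x_0,R)\backslash B(x_0,r)}\partial_r f^A\cdot sn_H^{-m+1}\,dvol_g,
\]
after which the H\"older step, the derivative computation for $vol^A(B)/vol_H^mB$, and the integrations giving (a), (b), (c) coincide with the paper's. The difference is in how you reach that inequality. You work pointwise in polar coordinates, split $A\partial_r$ into its radial and tangential parts, and dispose of $div\bigl((A\partial_r)^{\perp}\bigr)$ by the divergence theorem on each closed geodesic sphere. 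The paper never decomposes $A\partial_r$ at all: it takes the distributional form of $L_Ar\le\frac1C\frac{sn_H'}{sn_H}+\partial_rf^A$ (Lemma \ref{weakinequalitydis}) and tests it against the purely radial Lipschitz cut-off $\varphi_\varepsilon=\rho_\varepsilon(r)\,sn_H^{1-m}(r)$, so that $\langle\nabla\varphi_\varepsilon,A\nabla r\rangle$ only ever sees $\langle A\partial_r,\partial_r\rangle$ and the tangential component is invisible from the start; condition (\ref{volumecondition}) then kills the model term exactly as in your computation. Your route is more transparent geometrically (it explains \emph{why} the weight in $vol^A$ must be $\langle A\partial_r,\partial_r\rangle$), while the paper's route is more robust analytically.

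The one point you should tighten is the interaction between your sphere-by-sphere divergence theorem and the cut locus. Beyond the injectivity radius, $\partial B(x_0,r)\backslash cut(x_0)$ is not a closed manifold, so $\int_{\partial B}div_{\partial B}\bigl((A\partial_r)^{\perp}\bigr)\,dA=0$ is not automatic and would a priori produce boundary terms along the cut locus; likewise your pointwise identity for $L_Ar$ only holds off $cut(x_0)$. You gesture at Lemma \ref{weakinequalitydis} to absorb this, but that lemma is a statement about the distributional inequality for radial test functions, and it does not by itself justify discarding the tangential boundary contributions in your polar-coordinate derivation. The clean fix is to do what the paper does: once you decide to invoke the weak formulation, run the entire argument there with the radial test function $\varphi_\varepsilon$, in which case the tangential term never appears and no divergence theorem on possibly singular spheres is needed. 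With that adjustment your argument closes; without it, the step ``the contributions of $cut(x_0)$ are absorbed'' is a genuine gap in the derivation of the key differential inequality, even though the inequality itself is true.
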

\begin{remark}\label{importantvolumerem} If $r \to 0$, then the integral
 \[\int_r^R {\frac{{tsn_H^{m - 1}(t)}}{{{{\left( {vol_H^mB(t)} \right)}^{1 + 1/p}}}}dt}, \]
 blows up.
\end{remark}
\begin{proof}[ Proof of the Theorem \ref{extended volume1}]
By assumption ${L_A}r \le \frac{1}{C}\frac{{s{{n'}_H}(r)}}{{s{n_H}(r)}} + {\partial _r}.{f^A}(r)$ point wise on $M\backslash cut(x_0)$ and by Lemma \ref{weakinequalitydis}, the inequality holds weakly (in distribution sense) on $M$. Thus, for every $0 \le \varphi  \in Li{p_c}(M)$, we have
\begin{equation}
 - \int_M {\left\langle {\nabla \varphi ,A\nabla r} \right\rangle dvo{l_g}}  \le \frac{1}{C}\int_M {\frac{{s{{n'}_H}\left( {r(x)} \right)}}{{s{n_H}\left( {r(x)} \right)}}\varphi dvo{l_g}}  + \int_M {{\partial _r}.{f^A}(r)\varphi dvo{l_g}} .
\label{mc1}
\end{equation}
For any $\varepsilon  > 0$, let ${\varphi _\varepsilon }(x)$ be the following radial cut-off function,
\[{\varphi _\varepsilon }(x) = {\rho _\varepsilon }(r(x))sn_H^{1 - m}(r(x)),\]
where ${\rho _\varepsilon }(r)$ is the function
\[{\rho _\varepsilon }(t) = \left\{ {\begin{array}{*{20}{c}}
   0 \hfill & {} \hfill & {t \in \left[ {0,r} \right),} \hfill  \\
   {\frac{{t - r}}{\varepsilon }} \hfill & {} \hfill & {t \in \left[ {r,r + \varepsilon } \right),} \hfill  \\
   1 \hfill & {} \hfill & {t \in \left[ {r + \varepsilon ,R - \varepsilon } \right),} \hfill  \\
   {\frac{{R - t}}{\varepsilon }} \hfill & {} \hfill & {t \in \left[ {R - \varepsilon ,R} \right),} \hfill  \\
   0 \hfill & {} \hfill & {t \in \left[ {R, + \infty } \right).} \hfill  \\
\end{array}} \right.\]
By computation we have,
\[\nabla {\varphi _\varepsilon } = {\rm{ }}\left\{ { - \frac{{{\chi _{R - \varepsilon ,R}}}}{\varepsilon } + \frac{{{\chi _{r,r + \varepsilon }}}}{\varepsilon } - \left( {m - 1} \right)\frac{{s{{n'}_H}(r(x))}}{{s{n_H}(r(x))}}{\rho _\varepsilon }} \right\}sn_H^{ - m + 1}(r(x))\nabla r,\]
for a.e.  $x \in M$, where ${\chi _{s,t}}$ is the characteristic function of the set $B(x_0,t)\backslash B(x_0,s)$. By inserting ${\varphi _\varepsilon }$ in to (\ref{mc1}) we get,
\begin{eqnarray*}
&&\frac{1}{\varepsilon }\int_{ \left( {B(x_0,R)\backslash B(x_0,R - \varepsilon )} \right)}{sn_H^{ - m + 1}(r(x))\left\langle {\nabla r,A\nabla r} \right\rangle dvo{l_g}}\\&&\,\,\,\, - \frac{1}{\varepsilon }\int_{ \left( {B(x_0,r + \varepsilon )\backslash B(x_0,r)} \right)}{sn_H^{ - m + 1}(r(x))\left\langle {\nabla r,A\nabla r} \right\rangle dvo{l_g}}
\\&&\,\,\,\, \le \int_{M } {\left( {\frac{1}{C} - \left( {m - 1} \right)\left\langle {\nabla r,A\nabla r} \right\rangle } \right)s{n_H}^\prime (r(x))sn_H^{ - m}(r(x)){\rho _\varepsilon }dvo{l_g}} \\&&\,\,\,\, + \int_{M } {{\partial _r}.{f^A}(r){\rho _\varepsilon }sn_H^{ - m + 1}(r(x))dvo{l_g}.}
\end{eqnarray*}

So if, (note that $s{n_H}^\prime (r(x))sn_H^{ - m}(r(x)) \ge 0$)
\[\left( {\frac{1}{C}  - \left( {m  - 1} \right)\left\langle {\nabla r,A\nabla r} \right\rangle } \right) \le 0 \Leftrightarrow \frac{1 }{{C\left( {m  - 1} \right)}} \le \left\langle {\nabla r,A\nabla r} \right\rangle, \]
then
\begin{eqnarray*}
&&\frac{1}{\varepsilon }\int_{ \left( {B(x_0,R)\backslash B(x_0,R - \varepsilon )} \right)} sn_H^{ - m + 1}(r(x))\left\langle {\nabla r,A\nabla r} \right\rangle dvo{l_g}
\\&&\,\,\,\,- \frac{1}{\varepsilon }\int_{ \left( {B(x_0,r + \varepsilon )\backslash B(x_0,r)} \right)} {sn_H^{ - m + 1}(r(x))\left\langle {\nabla r,A\nabla r} \right\rangle dvo{l_g}}
\\&&\,\,\,\, \le \int_{M } {\partial _r}.{f^A}(r){\rho _\varepsilon }sn_H^{ - m + 1}(r(x))dvo{l_g}.
\end{eqnarray*}

Letting $\varepsilon  \to 0$, we conclude,
\begin{eqnarray*}
\frac{{vol^A(\partial B(x_0,R))}}{{sn_H^{m - 1}(R)}} - \frac{{vol^A(\partial B(x_0,r))}}{sn_H^{m - 1}(r)}&\le& {\smallint _{ \left( {B(x_0,R)\backslash B(x_0,r)} \right)}}{\partial _r}.{f^A}(r)sn_H^{ - m + 1}(r(x))dvo{l_g}\\&\le& sn_H^{ - m + 1}(r){\smallint _{ \left( {B(x_0,R)\backslash B(x_0,r)} \right)}}\left| {\nabla {f^A}(r)} \right|dvo{l_g}.
\end{eqnarray*}
Let $p > 1$, using H\"{o}lder inequality, we obtain
\begin{eqnarray}\label{Holderinequality}
  && {sn_H^{m - 1}(r)vol_{}^A(\partial B(x_0,R)) - sn_H^{m - 1}(R)vol^A(\partial B(x_0,R))}  \\&&\nonumber\,\,\,\,
   { \le \frac{1}{{{\delta _n}}}sn_H^{m - 1}(R){{\left( {vol^A\left( {B(x_0,R)} \right)} \right)}^{1 - \left( {1/p} \right)}}{{\left( {\int_{ B(x_0,R)} {{\delta _n}{{\left( {\left| {\nabla {f^A}} \right|} \right)}^p}dvo{l_g}} } \right)}^{1/p}}.}
\end{eqnarray}
 So, we have
\begin{eqnarray*}
&&\frac{d}{{dR}}\left( {\frac{{vo{l^A}(B({x_0},R))}}{{vol_H^mB(R)}}} \right)\\
&=&\frac{{vol_H^mB(R)vo{l^A}(\partial B({x_0},R)) - vol_H^m\left( {\partial B(R)} \right)vo{l^A}(B({x_0},R))}}{{{{\left( {vol_H^mB(R)} \right)}^2}}}
\\&\le&{c_m}{{\left( {vol_H^mB(R)} \right)}^{ - 2}}\int_0^R {\frac{1}{{{\delta _n}}}sn_H^{m - 1}(R){{\left( {vo{l^A}\left( {B({x_0},R)} \right)} \right)}^{1 - \left( {1/p} \right)}}dr}
\\&&\,\,\,\,\,\,\,\,\,\,\,\,\,\,\,\,\,\,\,\,\,\,\,\,\,\,\,\,\,\,\,\,\,\,\,\,\,\,\,\,\,\,\,\,\,\,\,\,\,\,\,\,\,\,\,\,\,\,\,\,\, \times {{\left( {\int_{B({x_0},R)} {{\delta _n}{{\left( {\left| {\nabla {f^A}} \right|} \right)}^p}dvo{l_g}} } \right)}^{1/p}}
\\&\le&\frac{1}{{{\delta _n}}}\frac{{{c_m}Rsn_H^{m - 1}(R)}}{{{{\left( {vol_H^mB(R)} \right)}^{1 + 1/p}}}}{{\left( {\frac{{vo{l^A}\left( {B({x_0},R)} \right)}}{{vol_H^mB(R)}}} \right)}^{1 - 1/p}}{{\left( {\int_{B({x_0},R)} {{\delta _n}{{\left( {\left| {\nabla {f^A}} \right|} \right)}^p}dvo{l_g}} } \right)}^{1/p}}
\\&\le&\frac{1}{{{\delta _n}}}\frac{{{c_m}Rsn_H^{m - 1}(R)}}{{{{\left( {vol_H^mB(R)} \right)}^{1 + 1/p}}}}{{\left( {\frac{{vo{l^A}\left( {B({x_0},R)} \right)}}{{vol_H^mB(R)}}} \right)}^{1 - 1/p}}{{\left\| {\left( {\delta _n^{1/p}\left( {\left| {\nabla {f^A}} \right|} \right)} \right)} \right\|}_{p,R}}.
\end{eqnarray*}
Thus,
\begin{eqnarray*}
\frac{d}{{dR}}\left( {{{\left( {\frac{{vol^A(B(x_0,R))}}{{vol_H^mB(R)}}} \right)}^{1/p}}} \right)
& =& \frac{1}{p}{{\left( {\frac{{vol^A(B(x_0,R))}}{{vol_H^mB(R)}}} \right)}^{ - 1 + 1/p}}\frac{d}{{dR}}\left( {\frac{{vol^A(B(x_0,R))}}{{vol_H^mB(R)}}} \right)\\
&\le&\frac{{{c_m}}}{{p{\delta _n}}}{{\left\| {\left( {\delta _n^{1/p}\left( {\left| {\nabla {f^A}} \right|} \right)} \right)} \right\|}_{p,R}}\frac{R}{{{{\left( {vol_H^mB(R)} \right)}^{1 + 1/p}}}}sn_H^{m - 1}(R).
\end{eqnarray*}
Consequently,
\[{\left( {\frac{{vol^A(B(x_0,R))}}{{vol_H^mB(R)}}} \right)^{1/p}} - {\left( {\frac{{vol^A(B(x_0,R))}}{{vol_H^mB(r)}}} \right)^{1/p}} \le \frac{{{c_m}}}{{p{\delta _n}}}{\left\| {\left( {\delta _n^{1/p}\left( {\left| {\nabla {f^A}} \right|} \right)} \right)} \right\|_{p,R}}\int_r^R {\frac{{tsn_H^{m - 1}(t)}}{{{{\left( {vol_H^mB(t)} \right)}^{1 + 1/p}}}}dt.} \]
For the volume comparison for annular regoins we use twice procedures of part (a).
At first note
\begin{eqnarray*}
&&\frac{d}{{dR}}\left( {\frac{{vol^A(B({x_0},r,R))}}{{vol_H^mB(r,R)}}} \right)\\
&=& {c_m}\frac{{vol^A(\partial B({x_0},R))\int_r^R {sn_H^{m - 1}(t)dt}  - sn_H^{m - 1}(R)vol^A(B({x_0},r,R))}}{{{{\left( {vol_H^mB(r,R)} \right)}^2}}}\\
&=&{{\left( {vol_H^mB(r,R)} \right)}^{ - 2}}\int_r^R\big[ {{c_m}sn_H^{m - 1}(t)vol^A(\partial B({x_0},R)) - {c_m}sn_H^{m - 1}(R)vol^A(B({x_0},t))\big]dt}
\\&\le&  \frac{1}{{{\delta _n}}}\frac{{{c_m}Rsn_H^{m - 1}(R)}}{{{{\left( {vol_H^mB(r,R)} \right)}^{1 + 1/p}}}}{{\left( {\frac{{vol^A\left( {B({x_0},r,R)} \right)}}{{vol_H^mB(r,R)}}} \right)}^{1 - 1/p}}{{\left\| {\left( {\delta _n^{1/p}\left( {\left| {\nabla {f^A}} \right|} \right)} \right)} \right\|}_{p,R}},
\end{eqnarray*}
where we use parameter ${vol^A\left( {B({x_0},r,R)} \right)}$ instead of $vo{l^A}\left( {B({x_0},R)} \right)$ in the inequality (\ref{Holderinequality}).
and similarly,
\[\frac{d}{{dR}}{\left( {\frac{{vol^A(B({x_0},r,R))}}{{vol_H^mB(r,R)}}} \right)^{1/p}} \le \frac{1}{{p{\delta _n}}}\frac{{{c_m}Rsn_H^{m - 1}(R)}}{{{{\left( {vol_H^mB(r,R)} \right)}^{1 + 1/p}}}}{\left\| {\left( {\delta _n^{1/p}\left( {\left| {\nabla {f^A}} \right|} \right)} \right)} \right\|_{p,R}}.\]
Thus,
\begin{eqnarray}\label{anulusvolume1}
 &&{\left( {\frac{{vol^A(B({x_0},{r_2},{R_2}))}}{{vol_H^mB({r_2},{R_2})}}} \right)^{1/p}} - {\left( {\frac{{vol^A(B({x_0},{r_2},{R_1}))}}{{vol_H^mB({r_2},{R_1})}}} \right)^{1/p}} \\ \nonumber&&
\,\,\,\,\,\,\,\,\,\, \le \frac{{{c_m}}}{{p{\delta _n}}}{\left\| {\left( {\delta _n^{1/p}\left( {\left| {\nabla {f^A}} \right|} \right)} \right)} \right\|_{p,R}}\int_{{R_1}}^{{R_2}} {\frac{{tsn_H^{m - 1}(t)}}{{{{\left( {vol_H^mB({r_2},t)} \right)}^{1 + 1/p}}}}} dt.
\end{eqnarray}

Similarly,
\begin{eqnarray*}
&&\frac{d}{{dr}}\left( {\frac{{vol^A(B({x_0},r,R))}}{{vol_H^mB(r,R)}}} \right)
\\&=& {c_m}\frac{{ - vol^A(\partial B({x_0},r))\int_r^R {sn_H^{m - 1}(t)dt}  - \left( { - sn_H^{m - 1}(r)} \right)vol^A(B({x_0},r,R))}}{{{{\left( {vol_H^mB(r,R)} \right)}^2}}}
\\&=&{{\left( {vol_H^mB(r,R)} \right)}^{ - 2}}\int_r^R {{c_m}sn_H^{m - 1}(r)vol^A(\partial B({x_0},t)) - {c_m}sn_H^{m - 1}(t)vol^A(\partial B({x_0},r))dt}
\\&\le&\frac{1}{{{\delta _n}}}\frac{{{c_m}sn_H^{m - 1}(R){{\left\| {\left( {\delta _n^{1/p}\left( {\left| {\nabla {f^A}} \right|} \right)} \right)} \right\|}_{p,R}}}}{{{{\left( {vol_H^mB(r,R)} \right)}^{1 + 1/p}}}}{{\left( {\frac{{vol^A\left( {B({x_0},r,R)} \right)}}{{vol_H^mB(r,R)}}} \right)}^{1 - 1/p}}.
\end{eqnarray*}
So,
\[\frac{d}{{dr}}\left( {{{\left( {\frac{{vol^A(B({x_0},r,R))}}{{vol_H^mB(r,R)}}} \right)}^{1/p}}} \right) \le \frac{{{c_m}Rsn_H^{m - 1}(R)}}{{p{\delta _n}}}\frac{{{{\left\| {\left( {\delta _n^{1/p}\left( {\left| {\nabla {f^A}} \right|} \right)} \right)} \right\|}_{p,R}}}}{{{{\left( {vol_H^mB(r,R)} \right)}^{1 + 1/p}}}}.\]
And
\begin{eqnarray}\label{anuluavolume2}
&& {\left( {\frac{{vol^A(B({x_0},{r_2},{R_1}))}}{{vol_H^mB({r_2},{R_1})}}} \right)^{1/p}} - {\left( {\frac{{vol^A(B({x_0},{r_1},{R_1}))}}{{vol_H^mB({r_1},{R_1})}}} \right)^{1/p}} \\ \nonumber
&&\,\,\,\,\,\, \le \frac{{{c_m}}}{{p{\delta _n}}}{\left\| {\left( {\delta _n^{1/p}\left( {\left| {\nabla {f^A}} \right|} \right)} \right)} \right\|_{p,R}}\int_{{r_1}}^{{r_2}} {\frac{{{R_1}sn_H^{m - 1}({R_1})}}{{{{\left( {vol_H^mB(t,{R_1})} \right)}^{1 + 1/p}}}}dt} .
\end{eqnarray}
By adding (\ref{anuluavolume2}) and (\ref{anulusvolume1}), we get
\begin{eqnarray*}
&& {\left( {\frac{{vol^A(B({x_0},{r_2},{R_2}))}}{{vol_H^mB({r_2},{R_2})}}} \right)^{1/p}} - {\left( {\frac{{vol^A(B({x_0},{r_1},{R_1}))}}{{vol_H^mB({r_1},{R_1})}}} \right)^{1/p}} \\
&&\,\,\,\, \le \frac{{{c_m}}}{{p{\delta _n}}}{\left\| {\left( {\delta _n^{1/p}\left( {\left| {\nabla {f^A}} \right|} \right)} \right)} \right\|_{p,R}}  \left[ {\int_{{R_1}}^{{R_2}} {\frac{{tsn_H^{m - 1}(t)}}{{{{\left( {vol_H^mB({r_2},t)} \right)}^{1 + 1/p}}}}} dt + \int_{{r_1}}^{{r_2}} {\frac{{{R_1}sn_H^{m - 1}({R_1})}}{{{{\left( {vol_H^mB(t,{R_1})} \right)}^{1 + 1/p}}}}dt} } \right].
\end{eqnarray*}
\end{proof}
As the classical case, we can get an extension of Yau's theorem. It was originally proved by analytic methods in the Riemannian case for Ricci tensor by Calabi and Yau  in 1976 \cite{yau1976some}. We adapted \cite{wu2016comparison} for the proof.
\begin{theorem}[\textbf{Extension of Yau theorem}]\label{yau}
 Let $M$ be  non compact, $ x_0 $ be a fixed point and $Ri{c_{Trace(A)}}\left( {{\partial _r},A{\partial _r}} \right) \ge 0$, then for any $p > n$ and $R \ge 2$, there is an $\varepsilon  = \varepsilon (m,p,{\delta _1},{\delta _n},R)$ such that if
\[\mathop {\sup }\limits_{x \in M} \frac{{\delta _n^{}}}{{{\delta _1}vol(B(x,r))}}{\left( {\int_{B(x,r)} {{{\left| {\nabla {f^A}} \right|}^p}dvo{l_g}} } \right)^{1/p}} < \varepsilon \]
then
  \[vol\left( {B({x_0},r)} \right) \ge cr,\]
where $c$ is a constant.
\end{theorem}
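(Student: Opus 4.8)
The plan is to deduce linear volume growth from an approximate Bishop--Gromov relative volume comparison combined with the classical Calabi--Yau ray argument, adapted to the operator $L_A$. First I would transfer the statement from the Riemannian volume to the extended volume $vol^A$. Since $A$ is bounded as in Definition \ref{minmax}, every geodesic ball satisfies $\delta_1\, vol(B) \le vol^A(B) \le \delta_n\, vol(B)$, because $vol^A(B) = \int_B \langle \partial_r, A\partial_r\rangle\, dvol_g$ and $\delta_1 \le \langle \partial_r, A\partial_r\rangle \le \delta_n$ along unit radial directions. Hence a linear lower bound for $vol^A(B(x_0,r))$ yields one for $vol(B(x_0,r))$ after rescaling the constant by $\delta_n^{-1}$, and the smallness hypothesis on $\sup_{x}\frac{\delta_n}{\delta_1 vol(B(x,r))}\big(\int_{B(x,r)}|\nabla f^A|^p\big)^{1/p}$ is exactly the quantity that controls the error $\varepsilon$ appearing in the extended volume comparison; the supremum over all $x$ is what will let me recenter the comparison at a far away point.

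Next, because $Ri{c_{Trace(A)}}(\partial_r, A\partial_r) \ge 0$, I would apply the extended volume comparison Theorem \ref{extended volume comparison}(c) with $H = 0$, for which $sn_H(t) = t$ and $\frac{vol_H^m B(\rho_2)}{vol_H^m B(\rho_1)} = (\rho_2/\rho_1)^m$. Applied at an arbitrary center $q$ this produces the approximate relative volume comparison
\[\frac{vol^A(B(q,\rho_2))}{vol^A(B(q,\rho_1))} \le \left(\frac{1 - \bar C\varepsilon}{1 - 2\bar C\varepsilon}\right)^p \left(\frac{\rho_2}{\rho_1}\right)^m, \qquad 0 < \rho_1 \le \rho_2,\]
the right-hand factor being $1 + O(\varepsilon)$ once $\varepsilon$ is small. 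This is the substitute for the ordinary Bishop--Gromov ratio $(\rho_2/\rho_1)^m$.

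For the geometric core I would use that a complete noncompact $M$ carries a ray $\gamma$ issuing from $x_0$, and run the thin-annulus argument. Fix a large radius $R$ and set $q = \gamma(R)$, so $d(x_0,q) = R$. Every point of $B(x_0,1)$ lies at distance between $R-1$ and $R+1$ from $q$, so $B(x_0,1) \subset B(q,R+1)\setminus B(q,R-1)$, whence
\[vol^A(B(x_0,1)) \le vol^A(B(q,R+1)) - vol^A(B(q,R-1)) \le \left[\left(\frac{1-\bar C\varepsilon}{1-2\bar C\varepsilon}\right)^p\left(\frac{R+1}{R-1}\right)^m - 1\right] vol^A(B(q,R-1)).\]
Since $\left(\frac{R+1}{R-1}\right)^m = 1 + \frac{2m}{R} + O(R^{-2})$, the bracket equals $\frac{2m}{R} + O(\varepsilon) + O(R^{-2})$; choosing $\varepsilon = \varepsilon(m,p,\delta_1,\delta_n,R)$ small enough that the $O(\varepsilon)$ term is dominated by $\frac{m}{R}$ makes the bracket at most $\frac{Cm}{R}$. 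Rearranging gives $vol^A(B(q,R-1)) \ge \frac{R}{Cm}\, vol^A(B(x_0,1))$, and since $B(q,R-1) \subset B(x_0,2R-1)$ we obtain $vol^A(B(x_0,2R-1)) \ge \frac{R}{Cm}\, vol^A(B(x_0,1))$. Translating back through $\delta_1 \le \langle \partial_r, A\partial_r\rangle \le \delta_n$ yields $vol(B(x_0,r)) \ge cr$ with $c$ depending on $m,n,\delta_1,\delta_n$ and $vol^A(B(x_0,1))$.

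The main obstacle is the bookkeeping of the error term: the annulus around $q$ is thin only to order $1/R$, so the comparison error $O(\varepsilon)$ must be forced below that scale, which is precisely why the admissible $\varepsilon$ is allowed to depend on $R$ in addition to $m,p,\delta_1,\delta_n$. A secondary point needing care is that Theorem \ref{extended volume comparison}(c) is stated for the weighted extended volume $vol_{h^A}^A$, so I would either absorb the weight $e^{-h^A}$ into the constants using the bounds on $f^A$ and $Trace(A)$ over $B(x_0,2R)$, or rerun the integration-by-parts estimate of Theorem \ref{extended volume1} directly for the unweighted $vol^A$ with $H=0$; both routes deliver the $(\rho_2/\rho_1)^m(1+O(\varepsilon))$ relative bound used above.
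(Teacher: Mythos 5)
Your proposal is correct and follows essentially the same route as the paper: both run the Calabi--Yau thin-annulus argument at a far-away center at distance $R$ from $x_0$, using the approximate volume comparison with $H=0$ to show the annulus $B(\cdot,R-1,R+1)$ occupies only an $O(1/R)+O(\varepsilon)$ fraction of the large ball, then use the containments $B(x_0,1)\subset B(q,R-1,R+1)$ and $B(q,R+1)\subset B(x_0,2R+1)$. The only cosmetic difference is that the paper invokes the annular comparison of Theorem \ref{extended volume1}(c) with its additive $L^p$ error term, while you use the multiplicative ball-ratio form and subtract balls; your closing remark about rerunning the unweighted estimate with $H=0$ is in fact exactly what the paper does.
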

\begin{proof}Let $ x \in M $ be a point with $ {\rm{dist}}\left( {{x_0},x} \right){\rm{ }} = {\rm{ R }} \ge {\rm{ 2}} $. By relative comparison Theorem \ref{extended volume1} for annulus and letting  ${r_1} = 0\,,\,{r_2} = R - 1\,,\,{R_1} = R$ and ${R_2} = R + 1$ we have,
\[{\left( {\frac{{vo{l^A}\left( {B(x,R - 1,R + 1)} \right)}}{{{{\left( {t + R} \right)}^m} - {{\left( {R - 1} \right)}^m}}}} \right)^{1/p}} - {\left( {\frac{{vo{l^A}\left( {B(x,0,R)} \right)}}{{{R^m}}}} \right)^{1/p}} \le \frac{{2{c_m}}}{{p{\delta _n}}}{\left( {R + 1} \right)^{m + 1}}{\left\| {\left( {\delta _n^{1/p}\left( {\left| {\nabla {f^A}} \right|} \right)} \right)} \right\|_{p,M,R + 1}}.\]
So
\[\left( {\frac{{vo{l^A}\left( {B(x,R - 1,R + 1)} \right)}}{{{{\left( {R + 1} \right)}^m} - {{\left( {R - 1} \right)}^m}}}} \right) \le \left( {\frac{{vo{l^A}\left( {B(x,R)} \right)}}{{{R^m}}}} \right) + C{\left( {R + 1} \right)^{\left( {m + 1} \right)p}}\left\| {\left( {\delta _n^{1/p}\left( {\left| {\nabla {f^A}} \right|} \right)} \right)} \right\|_{p,M,R + 1}^p .\]
By multiplying with $\frac{{{{\left( {R + 1} \right)}^m} - {{\left( {R - 1} \right)}^m}}}{{vo{l^A}\left( {B(x,R + 1)} \right)}}$, we get
\begin{eqnarray*}
&&\left( {\frac{{vo{l^A}\left( {B(x,R - 1,R + 1)} \right)}}{{{{\left( {R + 1} \right)}^m} - {{\left( {R - 1} \right)}^m}}}} \right)
\\ &\le&\frac{D}{R} + D{\left( {R + 1} \right)^{\left( {m + 1} \right)p}}\mathop {\sup }\limits_{x \in M} \frac{{\delta _n^{}}}{{{\delta _1}vo{l^m}(B(x,R + 1))}}{\left( {\int_{B(x,R + 1)} {{{\left| {\nabla {f^A}} \right|}^p}dvo{l_g}} } \right)^{1/p}},
\end{eqnarray*}

where $ D $ is some constant depends on $ m,p,\delta_1 $. We choose $\varepsilon  = \varepsilon (m,p,{\delta _1},{\delta _n},R)$  small enough such that
\[\frac{{vo{l^A}\left( {B(x,R - 1,R + 1)} \right)}}{{vo{l^A}\left( {B(x,R + 1)} \right)}} \le \frac{{2D}}{R}.\]
For $R \ge 2$ we have,
\[vo{l^A}\left( {B({x_0},2R + 1)} \right) \ge \frac{{vo{l^A}\left( {B({x_0},1)} \right)}}{{2D}}R.\]
\end{proof}
To get a uniform volume comparison result, we need to bound the integral
\[\int_r^R {\frac{{tsn_H^{m - 1}(t)}}{{{{\left( {vol_H^mB(t)} \right)}^{1 + 1/p}}}}dt} .\]
We can achieve this goal with putting some restrictions on $ A $, by the following definition.
\begin{definition} We call the tensor $ A $ satisfies measure condition on a ball $B\left( {{x_0},{r_0}} \right)$, if there exists some almost smooth function ${h^A}$ which satisfies the following,
\begin{equation}
\nabla {h^A}(x) = \left\{ {\begin{array}{*{20}{c}}
   {{A^{ - 1}} \circ \nabla {f^A}(x)} & {x \in B({x_0},{r_0})}  \\
   0 & {O.W.}  \\
\end{array}} \right.
\label{invariantmeasure}
\end{equation}
We call ${e^{ - {h^A}}}dvo{l_g}$ and $ r_0 $ as corresponding measure and radius measure to the tensor $ A $, respectively.
Also we define the function ${{\bar f}^A}$ as
\[{{\bar f}^A}: = \left\{ {\begin{array}{*{20}{c}}
   0 \hfill & {x \in B({x_0},{r_0})} \hfill  \\
   {{f^A}} \hfill & {O.W.} \hfill  \\
\end{array}} \right.\]
\end{definition}
\begin{example} If ${f^A} = 0$ on some ball ${B({x_0},{r_0})}$, ( for example, when ${\nabla ^2}A = 0$ on ball ${B({x_0},{r_0})}$) then $ r_0 $ is one radius measure and $dvo{l_g}$ is the measure corresponding to $ A $.
\end{example}
\begin{proof}[\textbf{Proof of Theorem  \ref{extended volume comparison} (Extended volume comparison)}]To prove parts (a) and (b), note that by Theorem \ref{Meancurvaturecomparison} pat (b),  when

 \[m = C\left( {{\delta _n},n,{\delta _1},K,K',H} \right) = \left[ {\frac{{{\delta _n}(n - 1) + 4(K + K')}}{{{\delta _1}}}} \right] + 2,\]

then the conditions of Theorem \ref{extended volume1} are satisfied. To prove the part (c), a simply computation shows that ${L_{A,{h^A}}}r = {L_A}r - \left\langle {\nabla {h^A},A\nabla r} \right\rangle$, so by the conditions of the theorem we have,
\begin{eqnarray*}
{L_{A,{h^A}}}r&\le&{\delta _n}\left( {1 + \frac{{4\left( {K + K'} \right)}}{{{\delta _n}\left( {n - 1} \right)}}} \right)\left( {{\Delta _H}r} \right) + \left\langle {\nabla {f^A} - A\nabla {h^A},{\partial _r}} \right\rangle
\\&=&{\delta _n}\left( {1 + \frac{{4\left( {K + K'} \right)}}{{{\delta _n}\left( {n - 1} \right)}}} \right)\left( {{\Delta _H}r} \right) + \left\langle {\nabla {{\bar f}^A},{\partial _r}} \right\rangle .
\end{eqnarray*}
Similar argument shows that,
\[\frac{{vol_{{h^A}}^A(\partial B({x_0},R))}}{{sn_H^{m - 1}(R)}} - \frac{{vol_{{h^A}}^A(\partial B({x_0},r))}}{{sn_H^{m - 1}(r)}} \le sn_H^{ - m + 1}(r)\int_{\left( {B({x_0},R)\backslash B({x_0},r)} \right)} {\left| {\nabla {{\bar f}^A}(r)} \right|{e^{ - {h^A}}}dvo{l_g}} .\]
Let $p > 1$, using H\"{o}lder inequality, we obtain
\begin{eqnarray*}
   &&{sn_H^{m - 1}(r)vol_{{h^A}}^A(\partial B(x_0,R)) - sn_H^{m - 1}(R)vol_{{h^A}}^A(\partial B(x_0,r))} \hfill  \\&&
   {\,\,\,\,\,\,\,\,\,\,\,\, \le sn_H^{m - 1}(R){{\left( {\int_{ B(x_0,R)} {{{ {{r}} }^{1/\left( {p - 1} \right)}}{e^{ - {h^A}}}dvo{l_g}} } \right)}^{1 - 1/p}}{{\left( {\int_{ B(x_0,R)} {{{\left( {{r^{ - 1/p}}\left| {\nabla {{\bar f}^A}} \right|} \right)}^p}{e^{ - {h^A}}}dvo{l_g}} } \right)}^{1/p}}} \hfill  \\&&
   {\,\,\,\,\,\,\,\,\,\,\,\, \le \frac{1}{{{\delta _n}}}{R^{1/p}}sn_H^{m - 1}(R){{\left( {vol_{{h^A}}^A\left( {B(x_0,R)} \right)} \right)}^{1 - \left( {1/p} \right)}}{{\left( {\int_{B(x_0,R_T)} {{\delta _n}{{\left( {{r^{ - 1/p}}\left| {\nabla {{\bar f}^A}} \right|} \right)}^p}{e^{ - {h^A}}}dvo{l_g}} } \right)}^{1/p}}.} \hfill
\end{eqnarray*}
\[\begin{array}{*{20}{c}}

\end{array}\]
 So, we have
\[\frac{d}{{dR}}\left( {\frac{{vol_{{h^A}}^A(B({x_0},R))}}{{vol_H^mB(R)}}} \right) \le \frac{1}{{{\delta _n}}}\frac{{{c_m}{R^{1 + 1/p}}sn_H^{m - 1}(R)}}{{{{\left( {vol_H^mB(R)} \right)}^{1 + 1/p}}}}{\left( {\frac{{vol_{{h^A}}^A\left( {B({x_0},R)} \right)}}{{vol_H^mB(R)}}} \right)^{1 - 1/p}}{\left\| {\left( {\delta _n^{1/p}\left( {{r^{ - 1/p}}\left| {\nabla {{\bar f}^A}} \right|} \right)} \right)} \right\|_{p,{R_T}}}.\]
Similarly
\[\frac{d}{{dR}}\left( {{{\left( {\frac{{vol_{{h^A}}^A(B({x_0},R))}}{{vol_H^mB(R)}}} \right)}^{1/p}}} \right) \le \frac{{{c_m}}}{{p{\delta _n}}}{\left\| {\left( {\delta _n^{1/p}\left( {{r^{ - 1/p}}\left| {\nabla {{\bar f}^A}} \right|} \right)} \right)} \right\|_{p,{R}}}{\left( {\frac{R}{{\left( {vol_H^mB(R)} \right)}}} \right)^{1 + 1/p}}sn_H^{m - 1}(R).\]
By integrating and for $p > m$ we obtain,
\begin{eqnarray*}
  && {{{\left( {\frac{{vol_{{h^A}}^A(B({x_0},R)}}{{vol_H^mB(R)}}} \right)}^{1/p}} - {{\left( {\frac{{vol_{{h^A}}^A(B({x_0},r)}}{{vol_H^mB(r)}}} \right)}^{1/p}}} \hfill  \\&&
   {\,\,\,\,\,\,\,\,\,\,\,\,\,\,\,\,\,\,\,\,\,\,\,\,\,\,\,\,\,\,\, \le \frac{{{c_m}}}{{p{\delta _n}}}{{\left\| {\left( {\delta _n^{1/p}\left( {{r^{ - 1/p}}\left| {\nabla {{\bar f}^A}} \right|} \right)} \right)} \right\|}_{p,{R_T}}}\int_r^R {{{\left( {\frac{r}{{\left( {vol_H^mB(r)} \right)}}} \right)}^{1 + 1/p}}sn_H^{m - 1}(r)dr} } \hfill  \\&&
   {\,\,\,\,\,\,\,\,\,\,\,\,\,\,\,\,\,\,\,\,\,\,\,\,\,\,\,\,\,\,\, \le \frac{{{c_m}}}{{p{\delta _n}}}{{\left\| {\left( {\delta _n^{1/p}\left( {{r^{ - 1/p}}\left| {\nabla {{\bar f}^A}} \right|} \right)} \right)} \right\|}_{p,{R_T}}}\int_0^R {{{\left( {\frac{r}{{\left( {vol_H^mB(r)} \right)}}} \right)}^{1 + 1/p}}sn_H^{m - 1}(r)dr} } \hfill  \\&&
   {\,\,\,\,\,\,\,\,\,\,\,\,\,\,\,\,\,\,\,\,\,\,\,\,\,\,\,\,\,\,\, \le \tilde C\left( {{\delta _n},n,{\delta _1},K,K',H,R,p} \right){{\left\| {\left( {\delta _n^{1/p}\left( {{r^{ - 1/p}}\left| {\nabla {{\bar f}^A}} \right|} \right)} \right)} \right\|}_{p,{R_T}}}.} \hfill  \\
\end{eqnarray*}
where
\[\tilde C\left( {{\delta _n},n,{\delta _1},K,K',H,p,R} \right) = \frac{{{c_m}}}{{p{\delta _n}}}\int_0^R {{{\left( {\frac{r}{{\left( {vol_H^mB(r)} \right)}}} \right)}^{1 + 1/p}}sn_H^{m - 1}(r)dr}.\]
With a similar approach to \cite{{hu2008bounds}}, by choosing
\[\bar C\left( {{\delta _n},n,{\delta _1},K,K',H,p,{R_T}} \right) = \tilde C\left( {{\delta _n},n,{\delta _1},K,K',H,p,{R_T}} \right){\left( {vol_H^m\left( {{R_T}} \right)} \right)^{1/p}},\]
and
\[\varepsilon (p,A,{R_T}) = \frac{1}{{{{\left( {vol_{{h^A}}^A\left( {B({x_0},{R_T})} \right)} \right)}^{1/p}}}}{\left\| {\left( {\delta _n^{1/p}\left( {{r^{ - 1/p}}\left| {\nabla {{\bar f}^A}} \right|} \right)} \right)} \right\|_{p,{R_T}}},\]
when
$\varepsilon  < \frac{1}{{\bar C\left( {{\delta _n},n,{\delta _1},K,K',H,p,{R_T}} \right)}}$, one has
\[{\left( {\frac{1}{{vol_{{h^A}}^A(B({x_0},R))}}} \right)^{1/p}} \le \frac{1}{{1 - \bar C\varepsilon }}{\left( {\frac{{vol_H^m({R_T})}}{{vol_H^m(R)vol_{{h^A}}^A(R)}}} \right)^{1/p}}.\]
Also, when $\varepsilon  < \frac{1}{{2\bar C}}$ then,
\[\frac{{vol_{{h^A}}^A(B({x_0},R)}}{{vol_{{h^A}}^A(B({x_0},r)}} \le {\left( {\frac{{1 - \bar C\varepsilon }}{{1 - 2\bar C\varepsilon }}} \right)^p}\frac{{vol_H^mB(R)}}{{vol_H^mB(r)}}.\]
\end{proof}
\begin{remark}Note that ${\left( {\frac{{1 - \bar C\varepsilon }}{{1 - 2\bar C\varepsilon }}} \right)^p}$ is non decreasing both in ${R_T}$ and $ \varepsilon $.
\end{remark}
\section{Cheeger-Gromoll splitting theorem}\label{sec7}
One of the important application of the mean curvature comparison is the Cheeger-Gromoll splitting theorem. In this section we prove an extension of Cheeger-Gromoll splitting theorem by restriction on the extended Ricci tensor $Ric\left( {X,AX} \right)$. The approach is similar to the original one. i.e, we show that for the Bussemann function $b_\gamma ^ + $ associated to the ray $ {\gamma _ + } $, the vector field $\nabla b_\gamma ^ +$ is Killing and $\left\| {\nabla b_\gamma ^ + } \right\| = 1$. To do this, we use the extended Bochner formula and the restriction on the extended Ricci tensor to show $b_\gamma ^ + $ is a harmonic function.
At first, one should provide the maximum principal for the operator ${\Delta _{A,X}} = {\Delta _A} - \left\langle {X,\nabla \,\,} \right\rangle $. To do this we get the following Lemma.
\begin{lem}\label{maximumpoint}Let $f,h \in {C^2}(M)$ and $p \in M$ and $U$ be a neighborhood of $p$. If
\begin{itemize}
\item[a)]$f(p) = h(p)$,
\item[b)]$f(x) \ge h(x)$  for all $x \in U$,
\end{itemize}
then
\begin{itemize}
\item[a)]$\nabla f(p) = \nabla h(p)$,
\item[b)]$Hessf(p) \ge Hessh(p)$,
\item[c)]${\Delta _{A,X}}f(p) \ge {\Delta _{A,X}}h(p)$.
\end{itemize}
\end{lem}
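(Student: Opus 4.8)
The plan is to reduce all three assertions to the elementary behaviour of the difference $g := f - h$ at the point $p$, where by hypotheses (a) and (b) it attains a local minimum. Indeed $g \in C^2(M)$ with $g(p) = 0$ and $g(x) \ge 0$ for every $x \in U$, so $p$ is an interior local minimizer of $g$, and the three conclusions of the lemma are just restatements of the first- and second-order data of $g$ at $p$.

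First I would record the standard first- and second-order conditions at such a minimizer: evaluating $t \mapsto g(\sigma(t))$ along geodesics $\sigma$ emanating from $p$ (or working in normal coordinates centred at $p$), one obtains $\nabla g(p) = 0$ and $Hess\,g(p) \ge 0$ as a quadratic form. These are precisely conclusions (a), namely $\nabla f(p) = \nabla h(p)$, and (b), namely $Hess\,f(p) \ge Hess\,h(p)$.

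For (c) I would unwind $\Delta_{A,X}$ from Definition \ref{ricci}. Fixing an orthonormal frame $\{e_i\}$ at $p$ and letting $H$ denote the symmetric endomorphism with $\left\langle HY, Z\right\rangle = Hess\,g(p)(Y,Z)$, self-adjointness of $A$ gives $\Delta_A g(p) = \sum_i \left\langle \nabla_{e_i}\nabla g, A e_i\right\rangle = \sum_i \left\langle H e_i, A e_i\right\rangle = Trace(A \circ H)$. Because $\nabla g(p) = 0$ by part (a), the drift term $\left\langle X, \nabla g(p)\right\rangle$ vanishes, so $\Delta_{A,X} g(p) = Trace(A\circ H)$, and it remains only to show that this trace is nonnegative.

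This last point is the one step that is not pure bookkeeping, and it is where I would invoke the standing hypothesis (Note \ref{note1}, Definition \ref{minmax}) that $A$ is positive definite and self-adjoint, together with $H = Hess\,g(p) \ge 0$ from (b). Writing $A = A^{1/2}A^{1/2}$ for the positive self-adjoint square root and using the cyclic invariance of the trace, $Trace(A\circ H) = Trace(A^{1/2} H A^{1/2})$; the operator $A^{1/2} H A^{1/2}$ is symmetric and positive semidefinite, since $\left\langle A^{1/2} H A^{1/2} v, v\right\rangle = \left\langle H A^{1/2} v, A^{1/2} v\right\rangle \ge 0$ for all $v$, whence its trace is $\ge 0$. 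Therefore $\Delta_{A,X} g(p) \ge 0$, i.e. $\Delta_{A,X} f(p) \ge \Delta_{A,X} h(p)$, which is (c). The main obstacle is thus confined to this linear-algebra observation that the trace of the product of two positive semidefinite self-adjoint operators is nonnegative; everything else is the textbook local-minimum argument.
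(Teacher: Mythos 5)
Your proposal is correct and follows essentially the same route as the paper: both reduce to the difference $g=f-h$ having a local minimum at $p$, and both prove part (c) by writing $A=B^2$ for the positive self-adjoint square root and using $Trace(A\circ Hess\,g)=Trace(B\circ Hess\,g\circ B)=\sum_i\langle Hess\,g(Be_i),Be_i\rangle\ge 0$. The only difference is that you explicitly note the drift term $\langle X,\nabla g(p)\rangle$ vanishes by part (a), a step the paper leaves implicit.
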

\begin{proof}
Parts (a) and (b) are clear. For part (c) its sufficient to show that  ${\Delta _A}f(p) \ge {\Delta _A}h(p)$. We know that $Hess\left( {f - h} \right)(p) \ge 0$. By assumption, $A$ is positive definite, so $A=B^2$ and
\begin {eqnarray*}
{\Delta _A}\left( {f - h} \right)(p)&=&Trace\left( {{B^2} \circ hess\left( {f - h} \right)(p)} \right) = Trace\left( {B \circ hess\left( {f - h} \right)(p) \circ B} \right)
\\&=&\sum\nolimits_i {\left\langle {hess\left( {f - h} \right)(p) \circ B{e_i},B{e_i}} \right\rangle }  \ge 0
   \end{eqnarray*}
so ${\Delta _{A,X}}f(p) \ge {\Delta _{A,X}}h(p)$.
\end{proof}
Now we extend the maximum principle for ${\Delta _A}$.
\begin{theorem}[\textbf{Extended maximum principle}] \label{Maximum principle} Let $f \in {C^0}(M)$ and ${\Delta _{A,X}}f \ge 0$ in barrier sense, then $f$ is constant in a neighborhood of each local maximum. So, if $f$ has a global maximum, then $f$ is constant.
\end{theorem}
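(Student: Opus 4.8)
The plan is to reduce the statement to the classical strong maximum principle of E.~Hopf by first checking that $\Delta_{A,X}$ is a locally uniformly elliptic linear operator of second order with no zeroth-order term, and then to run Hopf's boundary-point argument inside the barrier framework, following Calabi \cite{calabi1958extension}.

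First I would work in a fixed coordinate chart and record the symbol of the operator. Writing $A=B^2$ as in the proof of Lemma \ref{maximumpoint}, one has $\Delta_A u=\mathrm{Trace}(A\circ \mathrm{hess}\,u)=\sum_i \mathrm{hess}\,u(Be_i,Be_i)$, so in coordinates $\Delta_A u=a^{ij}\partial_{ij}u+(\text{first order})$ with $(a^{ij})$ the components of $A$; by Definition \ref{minmax} its eigenvalues lie in $[\delta_1,\delta_n]$, hence the operator is uniformly elliptic on every compact set, with ellipticity constants controlled by $\delta_1,\delta_n$. Subtracting $\langle X,\nabla\cdot\rangle$ adds only a bounded first-order term and creates no zeroth-order term, so $\Delta_{A,X}$ falls under the hypotheses for which the strong maximum principle is available.

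Next I would establish the local statement via Hopf's boundary-point lemma. Let $p$ be a local maximum and set $m=f(p)$; suppose toward a contradiction that $f\not\equiv m$ on every neighborhood of $p$. Then one finds a point $q$ with $f(q)<m$ and a ball $B(q,R)$ contained in $\{f<m\}$ whose boundary meets $\{f=m\}$ at a point $p_0$, with $f\le m$ on $\overline{B(q,R)}$. On the annulus $\Omega=B(q,R)\setminus\overline{B(q,R/2)}$ I would introduce the Hopf barrier $\psi=e^{-\alpha\rho^2}-e^{-\alpha R^2}$ with $\rho=\mathrm{dist}(q,\cdot)$; uniform ellipticity and boundedness of the coefficients yield $\Delta_{A,X}\psi>0$ on $\Omega$ once $\alpha$ is large. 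For small $\varepsilon>0$ the function $w=f-m+\varepsilon\psi$ satisfies $w\le 0$ on $\partial\Omega$, attains the value $0$ at $p_0$, and is a strict subsolution, $\Delta_{A,X}w\ge\varepsilon\,\Delta_{A,X}\psi>0$, in the barrier sense. The strict interior sign together with the attainment of the maximum of $w$ at the boundary point $p_0$ is exactly the configuration excluded by Hopf's lemma, and this contradiction shows that $f\equiv m$ near $p$.

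The hard part will be executing the Hopf boundary-point step when $f$ is merely continuous and the inequality holds only in the barrier sense, so that $\nabla f(p_0)$ need not exist classically and the outward normal derivative cannot be read off directly. Here I would exploit the equivalence of the barrier, viscosity, and distribution senses recorded after Definitions \ref{viscosity} and \ref{distribution}, and at the touching point replace $f$ by the $C^2$ support functions furnished by Definition \ref{barrier}, transferring the differential inequality to them through Lemma \ref{maximumpoint}; this is precisely the device by which Calabi \cite{calabi1958extension} extended Hopf's principle to continuous functions. Finally the global statement is immediate: if $f$ attains a global maximum then the set $\{f=\max f\}$ is nonempty, closed by continuity, and open by the local statement just proved, so by connectedness of $M$ it is all of $M$ and $f$ is constant.
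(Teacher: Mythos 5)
Your argument is correct and rests on the same underlying idea as the paper's proof: Calabi's device of adding a small multiple of a smooth strict subsolution to $f$ so as to manufacture an interior local maximum of a strict barrier subsolution of $\Delta_{A,X}$, which is impossible by Lemma \ref{maximumpoint} applied to a support function from Definition \ref{barrier}. The implementations differ. You run the textbook Hopf boundary-point scheme: an interior tangent ball $B(q,R)$ to $\{f=m\}$, the annulus $\Omega=B(q,R)\setminus\overline{B(q,R/2)}$, and the barrier $\psi=e^{-\alpha\rho^2}-e^{-\alpha R^2}$, with the contradiction located at the touching point $p_0$. The paper avoids the tangent-ball construction altogether: it works on a single ball $B(p,r)$ centered at the local maximum, takes a smooth $\phi$ with $\phi(p)=0$, $\phi<0$ on a neighborhood $U$ of the set $V=\{x\in\partial B(p,r):f(x)=f(p)\}$ and $\nabla\phi\neq 0$, sets $h=e^{\alpha\phi}-1$ so that $\Delta_{A,X}h\ge \alpha e^{\alpha\phi}(\alpha\delta_1|\nabla\phi|^2+\Delta_{A,X}\phi)>0$ for large $\alpha$, and shows $F=f+\delta h$ has an interior maximum in $B(p,r)$ for small $\delta$; the first step of its proof (a point where $\Delta_{A,X}f>0$ in the barrier sense cannot be a local maximum) then gives the contradiction. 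The paper's version is shorter because it never needs the boundary-point lemma. One step of yours is stated too loosely: a strict subsolution attaining its maximum over $\overline{\Omega}$ at the boundary point $p_0$ is not by itself "excluded by Hopf's lemma" (that is exactly what the weak maximum principle predicts), and the classical normal-derivative contradiction is unavailable since $f$ is only continuous. The correct closing move, which your "hard part" paragraph essentially contains, is to observe that $w\le 0$ on $\overline{\Omega}$ and $w<0$ just outside $B(q,R)$ (where $\psi<0$ and $f\le m$), so $p_0$ is a local maximum of $w$ over a full neighborhood in $M$ while $w$ is a strict barrier subsolution there; Lemma \ref{maximumpoint} applied to a support function of $f$ at $p_0$ then yields the contradiction. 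With that rephrasing your proof closes, and the open-closed-connectedness argument for the global statement is the standard one the paper leaves implicit.
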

\begin{proof}Let $p \in M$ be a local maximum.  If ${\Delta _{A,X}}f (p)> 0$, then we have a contradiction by Lemma \ref{maximumpoint} part (c). So we assume ${\Delta_{A,X}}f (p)\ge 0$. Without loose of generality we can assume that there is a sufficiently small $r < inj(p)$, such that $ p $ is the maximum of restricted function $f:B(x_0,r) \to\mathbb{R} $ and there is some point ${x_0} \in \partial B(p,r)$ such that $f({x_0}) \ne f(p)$. As usual, we define
\[V: = \left\{ {x \in \partial B(p,r):f(x) = f(p)} \right\}.\]
Let $ U $ be an open neighborhood with the property $V \subseteq U \subseteq \partial B(p,r)$ and $\phi$ be the function such that,
\[\phi (p) = 0\,\,,\,\,\,{\left. \phi  \right|_U} < 0\,\,,\,\,\nabla \phi  \ne 0.\]
Then for the function $h = {e^{\alpha \phi }} - 1$ we have,
\[{\Delta _{A,X}}h = \alpha {e^{\alpha \phi }}\left( {\alpha \left\langle {\nabla \phi ,A\nabla \phi } \right\rangle  + {\Delta _{A,X}}\phi } \right) \ge \alpha {e^{\alpha \phi }}\left( {\alpha {\delta _1}{{\left| {\nabla \phi } \right|}^2} + {\Delta _{A,X}}\phi } \right).\]
So by choosing $\alpha$ large enough we have,
\[{\Delta _{A,X}}h > 0\,\,,\,\,{\left. h \right|_V}{\rm{ < }}0\,\,,\,\,h(p) = 0.\]
Now by defining $F = f + \delta h$, for small enough $\delta  > 0$, we have ${\Delta_{A,X}}F > 0$ on $\overline {B(x_0,r)} $ and $ F $ has a maximum point in $B(x_0,r)$, which is a contradiction by first part of the proof.
 \end{proof}

The regularity is provided by the following.
\begin{prop}\label{regular} If $ A $ is bounded ( ${\delta _1} > 0 $ ) and ${\Delta _{A,X}}f = 0$ in the barrier sense, then f is smooth.
\end{prop}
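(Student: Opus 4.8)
The plan is to reduce the statement to the classical interior regularity theory for linear, uniformly elliptic operators. First I would record that in any coordinate chart $\Delta_{A,X}$ is a second order linear differential operator of the form $\Delta_{A,X}u = a^{ij}\partial_i\partial_j u + b^k\partial_k u$, with no zeroth order term. Here the principal part $a^{ij}$ is the symmetric matrix representing the self-adjoint tensor $A$ (with respect to $g$), since $\Delta_A u = \mathrm{Trace}(A\circ \mathrm{Hess}\,u)$, while $b^k$ gathers the Christoffel contributions coming from $\mathrm{Hess}$ together with the components of $-X$. Because $A$ is a smooth tensor field and $X$ a smooth vector field, all the coefficients are smooth; and because $A$ is bounded in the sense of Definition \ref{minmax}, its eigenvalues lie in $[\delta_1,\delta_n]$ with $\delta_1>0$, so that $a^{ij}\xi_i\xi_j\ge \delta_1|\xi|^2$ and the operator is uniformly elliptic on every compact set. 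This is the point at which the hypothesis $\delta_1>0$ enters decisively.

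Next I would upgrade the hypothesis ``$\Delta_{A,X}f=0$ in the barrier sense'' to a distributional statement. By Lemma \ref{maximumpoint}(c), whenever a $C^2$ function touches $f$ from one side at a point, the comparison of the operator values holds; this is exactly what is needed to conclude that a barrier solution is a viscosity solution in the sense of Definition \ref{viscosity}, applied to both $f$ and $-f$. Then, as already recorded after Definition \ref{distribution}, the equivalence of viscosity and distributional solutions for a locally uniformly elliptic operator with continuous coefficients (see \cite[Theorem 3.2.11]{hormander1994notions} or \cite{lions1983optimal}) shows that $\Delta_{A,X}f=0$ holds in the distribution sense of Definition \ref{distribution}.

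Finally, $f\in C^0(M)$ is a distributional solution of the uniformly elliptic equation $\Delta_{A,X}f=0$ with smooth coefficients, and standard interior elliptic regularity yields $f\in C^\infty$. Concretely, I would first obtain $f\in W^{2,p}_{\mathrm{loc}}$ for all $p$ from the interior $L^p$ estimates (using $f\in L^\infty_{\mathrm{loc}}$), hence $f\in C^{1,\alpha}_{\mathrm{loc}}$ by Sobolev embedding; writing the equation as $a^{ij}\partial_i\partial_j f = -b^k\partial_k f$ with right-hand side now in $C^{0,\alpha}_{\mathrm{loc}}$, the Schauder estimates give $f\in C^{2,\alpha}_{\mathrm{loc}}$, and iterating (the coefficients being smooth) raises the regularity indefinitely.

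I expect the only genuinely delicate step to be the passage from the barrier inequality to the distributional one: it rests on the two cited equivalence theorems, whose hypotheses—a locally uniformly elliptic operator with sufficiently regular coefficients—must be verified, and this verification is precisely where the standing assumption $\delta_1>0$ is indispensable, since without uniform ellipticity neither the equivalence of the weak notions nor the subsequent regularity theory is available.
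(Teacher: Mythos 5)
Your proposal is correct and follows the same basic strategy as the paper: reduce to classical interior regularity for a uniformly elliptic operator, with the standing bound $\delta_1>0$ from Definition \ref{minmax} supplying the uniform ellipticity. The difference is one of completeness rather than of route. The paper's proof is a single sentence citing Sections 6.3--6.4 and Theorem 6.17 of \cite{gilbarg2015elliptic}; those results concern classical (already $C^{2,\alpha}$) solutions, so applying them to a function known only to satisfy $\Delta_{A,X}f=0$ in the barrier sense leaves an unaddressed step. You supply exactly that step: barrier solutions are viscosity solutions by Lemma \ref{maximumpoint} (as the paper itself observes after Definition \ref{viscosity}), viscosity solutions are distributional solutions by \cite[Theorem 3.2.11]{hormander1994notions} or \cite{lions1983optimal}, and then the $W^{2,p}$--Schauder bootstrap upgrades the continuous distributional solution to $C^\infty$. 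The one point I would tighten is the very first rung of the bootstrap: the interior $L^p$ estimates of \cite{gilbarg2015elliptic} are a priori estimates for functions already in $W^{2,p}_{\mathrm{loc}}$, so passing from ``$f\in L^\infty_{\mathrm{loc}}$ distributional solution'' to ``$f\in W^{2,p}_{\mathrm{loc}}$'' needs an extra word (e.g., hypoellipticity of elliptic operators with smooth coefficients, or rewriting the principal part in divergence form and invoking the $H^1$/De Giorgi--Nash theory before bootstrapping). With that caveat your argument is sound and, if anything, more faithful to what the proposition actually requires than the proof printed in the paper.
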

\begin{proof} Regularity is a local property and $A$ is self-adjoint positive definite. ${\Delta _{A,X}}$ satisfied the elliptic conditions in section 6.3-6.4 or Theorem 6.17 of \cite{gilbarg2015elliptic}. So by elliptic regularity property $f$ is smooth ( Note that by our definition when $ A $ is bounded, so for each vector field $X$ with $\left| X \right| = 1$ one has $0 < {\delta _1} \le \left\langle {AX,X} \right\rangle$).
\end{proof}
Now we ready to prove the extension of Cheeger-Gromoll splitting theorem. To this end, we use the so-called Busseman functions of a line in a manifold and show that gradient of a Busseman funtion ${b^ + }$ is Killing and $\left\| {\nabla {b^ + }} \right\| = 1$. First we recall the definition of a Bussemann function.
\begin{definition}\cite{daicomparison,wei2007comparison,zhu1997comparison} Let $\gamma :\left[ {0, + \infty } \right] \to M$ be a ray, the Bussemann function ${b^\gamma }$ associated to $\gamma $ is defined as ${b^\gamma }(x): = \mathop {\lim }\limits_{t \to \infty } \left( {t - d(x,\gamma (t))} \right)$.
\end{definition}
For the next step we should prove that ${\Delta _{A,{f^A}}}\left( {{b^\gamma }} \right) \ge 0$ in the barrier sense.
\begin{prop}\label{busemansuphar} If $Ric({\partial _r},A{\partial _r}) \ge 0$, then ${\Delta _{A,{f^A}}}\left( {{b^\gamma }} \right) \ge 0$ in the barrier sense.
\end{prop}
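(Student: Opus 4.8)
The plan is to realize $b^\gamma$ as a supremum of smooth lower support functions built from distance functions to points receding to infinity along an asymptotic ray, and to control $\Delta_{A,f^A}$ on these supports by the mean curvature comparison Theorem \ref{Meancurvaturecomparison}(a) specialized to $H=0$. Fix $p\in M$ and $\varepsilon>0$. First I would construct an asymptotic ray $\sigma$ from $p$: choose $t_i\to\infty$, let $\sigma_i$ be unit-speed minimal geodesics from $p$ to $\gamma(t_i)$, and pass to a subsequence so that $\sigma_i'(0)$ converges; the limiting ray $\sigma$ with $\sigma(0)=p$ is asymptotic to $\gamma$. The two standard facts about Busemann functions I will invoke are that $b^\gamma$ is $1$-Lipschitz and that it grows at unit rate along the asymptotic ray, $b^\gamma(\sigma(s))=b^\gamma(p)+s$.

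Next I would define, for each $s>0$, the candidate support function
\[
b_{p,s}(x):=b^\gamma(p)+s-d(x,\sigma(s)).
\]
Evaluating at $p$ gives $b_{p,s}(p)=b^\gamma(p)+s-s=b^\gamma(p)$, while the $1$-Lipschitz bound together with the unit growth yields, for every $x$,
\[
b^\gamma(x)\ge b^\gamma(\sigma(s))-d(x,\sigma(s))=b^\gamma(p)+s-d(x,\sigma(s))=b_{p,s}(x),
\]
so $b_{p,s}$ touches $b^\gamma$ from below at $p$. Writing $r_s(x)=d(x,\sigma(s))$ and noting that $\Delta_{A,f^A}$ is linear and annihilates constants, we have $\Delta_{A,f^A}b_{p,s}=-\Delta_{A,f^A}r_s$.

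The estimate then comes from applying Theorem \ref{Meancurvaturecomparison}(a) with $H=0$ to the distance function $r_s$ from $\sigma(s)$. Since the extended Ricci hypothesis gives $Ric(\partial_{r_s},A\partial_{r_s})\ge 0$ along geodesics issuing from $\sigma(s)$, $\left|f^A\right|\le K$, and for $H=0$ one has $\Delta_H r=(n-1)/r$, the theorem yields
\[
\Delta_{A,f^A}r_s\le\frac{\delta_n(n-1)+4K}{r_s}
\]
pointwise on $M\setminus cut(\sigma(s))$, and Lemma \ref{weakinequalityba} (with $v(t)=t$, so $v'\equiv 1$, $v''\equiv 0$) promotes this to the barrier sense on all of $M$. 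Consequently $b_{p,s}$ admits, for each $\varepsilon'>0$, a $C^2$ lower support $\psi$ at $p$ with
\[
\Delta_{A,f^A}\psi(p)\ge-\frac{\delta_n(n-1)+4K}{s}-\varepsilon',
\]
using $r_s(p)=s$. Since $\psi\le b_{p,s}\le b^\gamma$ and $\psi(p)=b^\gamma(p)$, the same $\psi$ supports $b^\gamma$ from below at $p$. Choosing $s$ large and $\varepsilon'$ small makes the right-hand side at least $-\varepsilon$, which is exactly the statement that $\Delta_{A,f^A}b^\gamma\ge 0$ in the barrier sense at $p$.

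The main obstacle I anticipate is not the curvature estimate, which is handed to us by Theorem \ref{Meancurvaturecomparison} once we put $H=0$, but the two pieces of Busemann-function technology: justifying the unit-rate growth $b^\gamma(\sigma(s))=b^\gamma(p)+s$ along the asymptotic ray, which is what upgrades the trivial Lipschitz inequality into a genuine touching-from-below support, and passing from the pointwise comparison on $M\setminus cut(\sigma(s))$ to an honest $C^2$ barrier at $p$ despite the possible non-smoothness of $r_s$ on the cut locus, for which Lemma \ref{weakinequalityba} (Calabi's device) is the right tool. Both are standard, so the argument is essentially a transcription of the Riemannian and weighted cases, with the constant $\delta_n(n-1)+4K$ playing the role of $n-1$.
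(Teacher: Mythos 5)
Your proposal is correct and follows essentially the same route as the paper: both realize $b^\gamma$ at each point as being supported from below by functions of the form $b^\gamma(p)+s-d(\cdot,\sigma(s))$ built from an asymptotic ray, apply Theorem \ref{Meancurvaturecomparison}(a) with $H=0$ to the distance function from $\sigma(s)$ to get $\Delta_{A,f^A}$ of the support bounded below by $-\left(\delta_n(n-1)+4K\right)/s$, and let $s\to\infty$. The only cosmetic difference is that you route the cut-locus issue through Lemma \ref{weakinequalityba}, whereas the paper notes directly (citing the standard references) that these support functions are smooth near the base point.
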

\begin{proof}For each point  $q \in M$ the family of  functions defined as ${h}(x) = t - d(x,\overline \gamma  (t)) + {b^\gamma }(q)$ are a lower barrier functions of ${b^\gamma }$  at the point $ q $, where $\overline \gamma  (t)$ is one of asymptotic ray to the ray $ \gamma $ at $ q $  \cite{daicomparison,wei2007comparison,zhu1997comparison}. So ${h}$ is smooth in this neighborhood ${U}$ of $ q $. Finally by Theorem \ref{Meancurvaturecomparison} and the restriction on the extended Ricci tensor,
\[{\Delta _{A,{f^A}}}{h}(q) =  - {\Delta _{A,{f^A}}}\left( {d(q,\bar \gamma (t))} \right) \ge  - {\delta _n}\left( {1 + \frac{{4K}}{{{\delta _n}\left( {n - 1} \right)}}} \right)\frac{1}{{d(q,\bar \gamma (t))}}.\]
Since $\mathop {\lim }\limits_{t \to \infty } d(q,\overline \gamma  (t)) = \infty$, for each $\varepsilon  > 0$ one can finds some $ t $  with ${\Delta _A}\left( {d(q,\overline \gamma  (t))} \right) \ge  - \varepsilon$ and this complete the proof.
\end{proof}
\begin{cor}\label{busemansmooth}If $Ric({\partial _r},A{\partial _r}) \ge 0$ and ${\gamma _ + }$ and ${\gamma _ - }$ be two rays derived from the line $\gamma $ and ${b^ + }$ and ${b^ - }$ denote their Bussemann functions, then
\begin{itemize}
\item[a)]$b^ +  + b ^ -  = 0$,
\item[b)]${\Delta _{A,{f^A}}}{b^ + } = {\Delta _{A,{f^A}}}{b^ - } = 0$ and the functions $b^ +$ and $ b ^ - $ are smooth.
\end{itemize}
\end{cor}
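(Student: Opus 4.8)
The plan is to follow the classical Cheeger--Gromoll scheme, replacing the Laplacian throughout by the operator $\Delta_{A,f^A}$ (which is $\Delta_{A,X}$ with $X = \nabla f^A$) and invoking the three tools already assembled: the superharmonicity of Busemann functions (Proposition \ref{busemansuphar}), the extended maximum principle (Theorem \ref{Maximum principle}), and the elliptic regularity for $\Delta_{A,X}$ (Proposition \ref{regular}).

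First I would record the two elementary facts about the pair $b^+,b^-$ that drive the argument. Writing $\gamma_+(t)=\gamma(t)$ and $\gamma_-(t)=\gamma(-t)$, the triangle inequality together with the fact that a line is globally minimizing, $d(\gamma(t),\gamma(-t))=2t$, gives
\[
b^+(x)+b^-(x)=\lim_{t\to\infty}\bigl(2t-d(x,\gamma(t))-d(x,\gamma(-t))\bigr)\le 0
\]
for every $x\in M$, while evaluating along the line yields $b^+(\gamma(s))=s$ and $b^-(\gamma(s))=-s$, so $b^++b^-$ vanishes on $\gamma$. Hence $b^++b^-$ attains the global maximum value $0$, achieved at every point of the line.

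Next I would combine the subsolution properties. By Proposition \ref{busemansuphar} both $\Delta_{A,f^A}b^+\ge 0$ and $\Delta_{A,f^A}b^-\ge 0$ in the barrier sense; since lower support functions add and $\Delta_{A,f^A}=\Delta_A-\langle\nabla f^A,\nabla\,\cdot\,\rangle$ is linear in the function argument, the sum satisfies $\Delta_{A,f^A}(b^++b^-)\ge 0$ in the barrier sense as well. Because this continuous function has a global maximum, the extended maximum principle (Theorem \ref{Maximum principle}) forces $b^++b^-\equiv 0$, which is part (a). For part (b), I would feed the identity $b^-=-b^+$ back into the barrier inequalities: from $\Delta_{A,f^A}b^-\ge 0$ and $b^-=-b^+$ one gets $\Delta_{A,f^A}b^+\le 0$ in the barrier sense, and together with $\Delta_{A,f^A}b^+\ge 0$ this yields $\Delta_{A,f^A}b^+=0$ in the barrier sense (and symmetrically for $b^-$). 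Since $A$ is bounded with $\delta_1>0$, Proposition \ref{regular} then upgrades $b^+$ and $b^-$ to smooth solutions, completing (b).

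I expect the only delicate point to be the bookkeeping of the barrier-sense inequalities: one must check that adding lower support functions for $b^+$ and $b^-$ produces a legitimate lower support function for the sum (so that subsolutions of $\Delta_{A,f^A}$ add), and that passing from $\Delta_{A,f^A}b^-\ge 0$ to $\Delta_{A,f^A}b^+\le 0$ is valid in the formalism of Definition \ref{barrier}. Both reduce to the linearity of $\Delta_{A,X}$ in its function argument and the sign convention that ``$\Delta_{A,X}u\le v$'' means $\Delta_{A,X}(-u)\ge -v$, so no new geometric input beyond the maximum principle and the regularity statement is required.
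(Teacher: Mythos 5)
Your proposal is correct and follows essentially the same route as the paper's proof: subharmonicity of $b^{+}$ and $b^{-}$ from Proposition \ref{busemansuphar}, the interior maximum of $b^{+}+b^{-}$ along the line combined with the extended maximum principle (Theorem \ref{Maximum principle}) to get $b^{+}+b^{-}\equiv 0$, and then $\Delta_{A,f^{A}}b^{+}=-\Delta_{A,f^{A}}b^{-}$ together with Proposition \ref{regular} for harmonicity and smoothness. The only difference is that you spell out the barrier-sense bookkeeping (additivity of lower support functions, the sign convention for $\le$), which the paper leaves implicit.
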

\begin{proof} By Proposition \ref{busemansuphar} and the restriction on the extended Ricci tensor, \linebreak we know ${\Delta _{A,{f^A}}}{b^ + },{\Delta _{A,{f^A}}}{b^ - } \ge 0$. So
\[{\Delta_{A,{f^A}}}\left( {{b^ + } + {b^ - }} \right) \ge 0.\]
By triangle inequality we know $\left( {{b^ + } + {b^ - }} \right)(\gamma (0)) = 0$ is the maximum value of the sub harmonic function ${b^ + } + {b^ - }$, so by Theorem \ref{Maximum principle} we have ${b^ + } + {b^ - } = 0$.
For part (b) by the first part ${\Delta _{A,{f^A}}}{b^ + } =  - {\Delta _{A,{f^A}}}{b^ - }$, also Proposition \ref{busemansuphar}
concludes ${\Delta _{A,{f^A}}}{b^ + } = {\Delta_{A,{f^A}}}{b^ - } = 0$ , so by Proposition \ref{regular}  ${b^ + },{b^ - }$ are smooth.
\end{proof}
\begin{cor}\label{unitbgr}  If $Ric({\partial _r},A{\partial _r}) \ge 0$, then $\left\| {\nabla {b^\gamma }} \right\|= 1$.
\end{cor}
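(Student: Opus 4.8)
The plan is to establish the two inequalities $\|\nabla b^\gamma\|\le 1$ and $\|\nabla b^\gamma\|\ge 1$ separately, using that $b^\gamma$ is already known to be smooth by Corollary \ref{busemansmooth}. The tensor $A$ plays no further role at this stage: once smoothness is in hand, the claim is a purely metric fact about Busemann functions, so the classical argument transfers essentially verbatim. The role of the hypothesis $Ric(\partial_r,A\partial_r)\ge 0$ was already exhausted in Corollary \ref{busemansmooth}, where it upgraded $b^\gamma$ from a merely Lipschitz barrier-subsolution to a genuine smooth $\Delta_{A,f^A}$-harmonic function.

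For the upper bound I would first recall that $b^\gamma$ is $1$-Lipschitz. Indeed, for any $x,y\in M$ and $t>0$ the triangle inequality gives $|d(x,\gamma(t))-d(y,\gamma(t))|\le d(x,y)$, and passing to the limit $t\to\infty$ in the definition $b^\gamma(x)=\lim_{t\to\infty}\bigl(t-d(x,\gamma(t))\bigr)$ yields $|b^\gamma(x)-b^\gamma(y)|\le d(x,y)$. Since $b^\gamma$ is smooth, its Lipschitz constant bounds the norm of its gradient, so $\|\nabla b^\gamma(q)\|\le 1$ at every point $q$.

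For the lower bound I would use an asymptotic ray. Fix $q\in M$ and let $\bar\gamma$ be an asymptotic ray to $\gamma$ issuing from $q$, exactly as in the proof of Proposition \ref{busemansuphar}. The defining property of such a ray is that $b^\gamma$ grows at unit rate along it, that is, $b^\gamma(\bar\gamma(s))=b^\gamma(q)+s$ for all $s\ge 0$; here the inequality $\le$ is just the Lipschitz bound already proved, while $\ge$ is the standard co-ray estimate for asymptotic rays. Differentiating this identity at $s=0$ and using smoothness of $b^\gamma$ gives $\langle\nabla b^\gamma(q),\bar\gamma'(0)\rangle=1$, and since $|\bar\gamma'(0)|=1$ the Cauchy--Schwarz inequality forces $\|\nabla b^\gamma(q)\|\ge 1$. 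Combining the two bounds yields $\|\nabla b^\gamma\|\equiv 1$.

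The only delicate point is the linear-growth identity $b^\gamma(\bar\gamma(s))=b^\gamma(q)+s$ along the asymptotic ray; this rests on the construction of asymptotic (co-)rays and is entirely independent of the operator $L_A$, so I would simply quote it from the references \cite{daicomparison,wei2007comparison,zhu1997comparison} already cited for the Busemann machinery. Everything else is immediate, and I expect no genuine obstacle: the whole content special to this extended setting has been absorbed into the smoothness and harmonicity of $b^\gamma$ furnished by Corollary \ref{busemansmooth}.
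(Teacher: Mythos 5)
Your proposal is correct and follows essentially the same route as the paper: the paper's own proof simply invokes smoothness from Corollary \ref{busemansmooth} and then cites \cite{cheeger1971splitting,daicomparison,fang2009two,wei2007comparison,zhu1997comparison} for the classical fact $\|\nabla b^{+}\|=1$, which is exactly the two-sided Lipschitz/asymptotic-ray argument you spell out. You have merely written out the standard details that the paper leaves to the references, so there is no substantive difference in approach.
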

\begin{proof}  By Corollary \ref{busemansmooth} the Busseman functions ${b^ + }$ are smooth, so by  \cite{cheeger1971splitting, daicomparison,fang2009two,wei2007comparison,zhu1997comparison} we have \[\left\| {\nabla {b^ + }} \right\| = 1.\]
\end{proof}

\begin{proof} [\textbf {Proof of Theorem \ref{Cheeger}}] By Corollary \ref{unitbgr} we have $\left\| {\nabla {b^ + }} \right\| = 1$, so ${\nabla _{\nabla {b^ + }}}\nabla {b^ + } = 0$, also $ A $ is a Codazzi tensor thus $ - \nabla {b^ + }.\nabla {b^ + }.Trace(B) + \left\langle {{\nabla _{\nabla {b^ + }}}div(B),\nabla {b^ + }} \right\rangle  = 0$. By applying Theorem \ref{bochner3} to the function ${b^ + }$,
\[0 = Trace\left( {A \circ hes{s^2}\left( { {b^ + }} \right)} \right) + \nabla {b^ + }.({\Delta _A}{b^ + }) - \sum\nolimits_i {\left\langle {{T^{\left( {{\nabla _{\nabla {b^ + }}}A} \right)}}(\nabla {b^ + },{e_i}),{e_i}} \right\rangle  + Ric(\nabla {b^ + },A\nabla {b^ + })} \]
By Corollary \ref{busemansmooth} ${\Delta _{A,{f^A}}}{b^ + } = 0$, so ${\Delta _A}{b^ + } = \left\langle {\nabla {f^A},\nabla {b^ + }} \right\rangle $ and
\[\nabla {b^ + }.({\Delta _A}{b^ + }) = \nabla {b^ + }.\left\langle {\nabla {f^A},\nabla {b^ + }} \right\rangle  = Hess{f^A}\left( {\nabla {b^ + },\nabla {b^ + }} \right).\]
By Definition \ref{Bakryidea} we know
\[Hess{f^A}\left( {\nabla {b^ + },\nabla {b^ + }} \right) - \sum\nolimits_i {\left\langle {{T^{\left( {{\nabla _{\nabla {b^ + }}}A} \right)}}(\nabla {b^ + },{e_i}),{e_i}} \right\rangle }  \ge 0,\]
so
\[0 \ge Trace\left( {A \circ hes{s^2}\left( { {b^ + }} \right)} \right) + Ric(\nabla {b^ + },A\nabla {b^ + }).\]
The condition on the extended Ricci tensor concludes $Trace\left( {A \circ hes{s^2}({b^ + })} \right)=0$. We know $A$ is positive definite ( note that $A$ is invertible), so $hes{s^2}({b^ + }) \equiv 0$. Consequently ${\nabla {b^ + }}$ is a Killing vector field and its flow is an isometry. Also $\left\| {\nabla {b^ + }} \right\| = 1$, so the following function
\[\begin{array}{l}
 \psi :N \times\mathbb{R}  \to M \\
 \psi (x,t) = Fl^{\nabla {b^ + }}(x), \\
 \end{array}\]
 splits $M$ isometrically, where $N = \left\{ {x:{b^ + }(x) = 0} \right\}$.
 Also $\bar A$ is a Codazzi tensor, to see this note that,
\begin{eqnarray*}
\nabla _X^N\left( {\bar AY} \right) &=& \nabla _X^{}\left( {\bar AY} \right) = \nabla _X^{}\left( {AY - \left\langle {{\partial _t},AY} \right\rangle {\partial _t}} \right) = \nabla _X^{}\left( {AY} \right) - \left\langle {{\partial _t},\nabla _X^{}\left( {AY} \right)} \right\rangle {\partial _t} \\&=& pro{j_{\partial _t^ \bot }}\left( {\nabla _X^{}\left( {AY} \right)} \right),
\end{eqnarray*}
 and
 \[\bar A\left( {\nabla _X^NY} \right) = \bar A\left( {\nabla _X^{}Y} \right) = pro{j_{\partial _t^ \bot }}\left( {\nabla _X^{}\left( {AY} \right)} \right).\]
 So
 \[\left( {\nabla _X^N\bar A} \right)Y = \nabla _X^N\left( {\bar AY} \right) - \bar A\left( {\nabla _X^NY} \right) = pro{j_{\partial _t^ \bot }}\left( {\left( {\nabla _X^{}A} \right)Y} \right) = pro{j_{\partial _t^ \bot }}\left( {\left( {\nabla _Y^{}A} \right)X} \right) = \left( {\nabla _Y^N\bar A} \right)X.\]

The last part is clear by properties of Ricci tensor.
\end{proof}
Now we extend some famous consequences of Cheeger-Gromoll splitting theorem by some restrictions on the extended Ricci tensor. Similar to \cite{wei2007comparison} and \cite{zhu1997comparison} and lifting the tensor $Ric\left( {X,AX} \right)$  to $Ric\left( {X,\bar AX} \right)$ on the universal covering space of $ M $ as same as in the proof of Theorem \ref{Meyer's theorem} the following results are obtained .
\begin{theorem} If $M$ is a compact Riemannian manifold with $Ric\left( {X,AX} \right) \ge 0$ for any vector field $ X $, then $M$ is finitely covered by the universal covering ${N^{\dim M - k}} \times {T^k}$, where $N$ is compact and simply connected and $T^k$ is the flat torus of dimension $k$.
\end{theorem}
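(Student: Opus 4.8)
The plan is to reduce to the classical Cheeger--Gromoll structure argument by first passing to the universal cover, iterating the extended splitting Theorem \ref{Cheeger}, and then running a Bieberbach-type analysis on the resulting Euclidean factor. First I would lift everything to the universal covering $\Phi:\bar M\to M$ exactly as in the proof of Theorem \ref{Meyer's theorem}: set $\bar A:={\Phi^*}A=(\Phi_*)^{-1}\circ A\circ\Phi_*$, which is again self-adjoint, positive definite and Codazzi, and observe that $\overline{Ric}(\Phi^*X,\bar A(\Phi^*X))=Ric(X,AX)\ge 0$. Thus $\bar M$ is a complete, simply connected manifold with nonnegative extended Ricci tensor, and $\Gamma:=\pi_1(M)$ acts on it freely, properly discontinuously and cocompactly by isometries, because $M$ is compact.

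Next I would iterate the extended splitting theorem. As long as $\bar M$ contains a line, Theorem \ref{Cheeger} produces an isometric splitting $\bar M=N'\times\mathbb{R}$ together with a self-adjoint tensor $A_{N'}=\mathrm{proj}\circ A\circ\mathrm{proj}$ on $N'$ satisfying $Ric(X,A_{N'}X)\ge 0$; since the standing hypothesis is inherited by the pair $(N',A_{N'})$, the construction can be repeated. The process terminates after at most $n=\dim M$ steps and yields an isometric splitting $\bar M=N\times\mathbb{R}^k$ in which $N$ is complete, simply connected, carries a self-adjoint tensor $A_N$ with $Ric(X,A_NX)\ge 0$, and contains no line.

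With the metric splitting in hand, the remaining work is the classical topological argument, in which the tensor $A_N$ no longer plays any role. Because $N$ contains no line, the factor $\mathbb{R}^k$ is the full maximal flat (de Rham) factor, so the isometry group splits as $\mathrm{Isom}(N\times\mathbb{R}^k)=\mathrm{Isom}(N)\times\mathrm{Isom}(\mathbb{R}^k)$, and every $\gamma\in\Gamma$ may be written $\gamma=(\alpha_\gamma,\beta_\gamma)$ with $\beta_\gamma\in\mathrm{Isom}(\mathbb{R}^k)=O(k)\ltimes\mathbb{R}^k$. I would then show that the projection $\Gamma\to\mathrm{Isom}(\mathbb{R}^k)$ has image a crystallographic group acting cocompactly on $\mathbb{R}^k$; cocompactness of $\Gamma$ on the product forces $N$ to be compact, and the first Bieberbach theorem gives that the translation lattice $\mathbb{Z}^k$ of this image has finite index and is normal. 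Pulling this subgroup back (after a further finite-index reduction to eliminate the rotational part acting on the $N$-factor) produces a finite-index normal subgroup $\Gamma_0\le\Gamma$ acting as $\{e\}\times\mathbb{Z}^k$ on the $\mathbb{R}^k$ factor, whence $\bar M/\Gamma_0=N\times T^k$ is a finite cover of $M=\bar M/\Gamma$, with $N$ compact and simply connected.

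I expect the main obstacle to be precisely the compactness of $N$ together with the claim that the projected action on $\mathbb{R}^k$ is cocompact and crystallographic: one must verify that the translational parts of $\Gamma$ genuinely span $\mathbb{R}^k$ (otherwise the splitting of $\mathrm{Isom}(N\times\mathbb{R}^k)$ would force $N$ to be noncompact, contradicting cocompactness of $\Gamma$) and that the discreteness and cocompactness hypotheses of the Bieberbach theorems are met. Once the splitting of the isometry group and these facts are established, the conclusion follows from the original Cheeger--Gromoll and Bieberbach arguments verbatim.
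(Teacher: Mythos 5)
Your proposal is correct and follows exactly the route the paper intends: the paper offers no written proof of this statement, only the remark that one lifts $Ric(X,AX)$ to $Ric(X,\bar A X)$ on the universal cover (as in the proof of the Meyer-type theorem) and then argues as in the classical references of Cheeger--Gromoll, Wei--Wylie and Zhu. Your iteration of the extended splitting Theorem \ref{Cheeger} followed by the standard compactness-of-$N$ and Bieberbach analysis of the deck group is precisely that classical argument, so the proposal matches the paper's approach (and in fact supplies more detail than the paper does).
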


\begin{cor} Let $M$ be compact with $Ric\left( {X,AX} \right) \ge 0$ for any vector field $ X $, then
\begin{itemize}
\item[a)]$\,{b_1}(M) \le n$.
\item[b)] ${\pi _1}(M)$ has a free abelian subgroup of finite index of rank $\le n$.
\item[c)] If at one point $Ric\left( {X,AX} \right)>0$ for any non zero vector  $ X $, then ${\pi _1}(M)$ is finite.
\end{itemize}
\end{cor}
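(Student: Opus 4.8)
The plan is to read all three assertions off the structure theorem established immediately above, namely that $M$ is finitely covered by $N^{n-k}\times T^k$ with $N$ compact and simply connected and $T^k$ a flat $k$-torus, supplemented by the vanishing mechanism inside the extended splitting Theorem \ref{Cheeger}. Write $\pi\colon\widetilde M\to M$ for such a finite cover with $\widetilde M=N\times T^k$, and note $k\le n$ since $\dim N=n-k\ge 0$. Parts (a) and (b) will then be pure bookkeeping, while part (c) will require me to revisit the splitting argument on the universal cover.

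For part (a) I would invoke the elementary transfer fact that for a finite cover the pullback $\pi^{*}\colon H^{1}(M;\mathbb{R})\to H^{1}(\widetilde M;\mathbb{R})$ is injective (the transfer $\tau$ satisfies $\tau\circ\pi^{*}=(\deg\pi)\,\mathrm{id}$, and over a field of characteristic zero $\deg\pi\neq0$ forces injectivity), so $b_{1}(M)\le b_{1}(\widetilde M)$. By the K\"unneth formula together with $\pi_{1}(N)=0$ one has $b_{1}(\widetilde M)=b_{1}(N)+b_{1}(T^{k})=0+k=k\le n$, giving $b_{1}(M)\le n$. For part (b), since $\widetilde M=N\times T^{k}$ is a finite cover, $\pi_{*}\pi_{1}(\widetilde M)$ is a finite–index subgroup of $\pi_{1}(M)$, and $\pi_{1}(\widetilde M)=\pi_{1}(N)\times\pi_{1}(T^{k})=\{e\}\times\mathbb{Z}^{k}\cong\mathbb{Z}^{k}$ because $N$ is simply connected. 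Hence $\pi_{1}(M)$ contains a free abelian subgroup of finite index and of rank $k\le n$.

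For part (c) it suffices to show $k=0$: then $\widetilde M=N$ is compact, so the universal cover of $M$ is compact and $\pi_{1}(M)$ is finite. Suppose instead $k\ge1$. The universal cover of $M$ agrees with that of $\widetilde M$, namely $N\times\mathbb{R}^{k}$, which contains a line $\gamma$ running along one of the Euclidean directions. Lifting the data through $\Phi$ as in the proof of Theorem \ref{Meyer's theorem}, the pulled–back tensor $\bar A$ is again a positive–definite Codazzi tensor with $Ric(X,\bar A X)\ge0$. Running the computation of Theorem \ref{Cheeger} for the Busemann function $b^{+}$ of $\gamma$ yields $hess^{2}(b^{+})\equiv0$ and, crucially, $Ric(\nabla b^{+},\bar A\nabla b^{+})\equiv0$ with $\|\nabla b^{+}\|\equiv1$. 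On the other hand, the hypothesis that $Ric(X,AX)>0$ at one point of $M$ lifts (the covering is a local isometry and $\bar A=\Phi^{*}A$) to a point $\tilde p$ of the universal cover where $Ric(X,\bar A X)>0$ for every nonzero $X$; evaluating at $X=\nabla b^{+}(\tilde p)\neq0$ contradicts $Ric(\nabla b^{+},\bar A\nabla b^{+})(\tilde p)=0$. Therefore $k=0$ and $\pi_{1}(M)$ is finite.

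I expect the only delicate step to be part (c): the hypothesis of strict positivity is placed at a single point of $M$, so the work lies in confirming that the splitting mechanism of Theorem \ref{Cheeger} applies verbatim to the lifted tensor $\bar A$ on the noncompact universal cover and that $Ric(\nabla b^{+},\bar A\nabla b^{+})$ vanishes \emph{identically} there. Once that identity is in hand, the single lifted point of strict positivity is enough to force the contradiction, and the rank of the torus factor must vanish.
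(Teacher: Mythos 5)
Your proposal is correct and follows essentially the route the paper intends: the paper gives no explicit proof of this corollary, deferring to the structure theorem $M$ finitely covered by $N^{n-k}\times T^{k}$ and the classical Cheeger--Gromoll arguments of \cite{wei2007comparison,zhu1997comparison}, which is exactly what you carry out (transfer/K\"unneth for (a), the finite-index $\mathbb{Z}^{k}$ for (b), and for (c) the observation that the Bochner identity in Theorem \ref{Cheeger} forces $Ric(\nabla b^{+},\bar A\nabla b^{+})\equiv 0$ on the universal cover, contradicting strict positivity at a lifted point unless $k=0$). Your write-up in fact supplies details the paper omits, and the lifting of $\bar A$, $f^{\bar A}$ and the boundedness hypotheses through the covering map is handled exactly as in the paper's proof of Theorem \ref{Meyer's theorem}.
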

Also for non-compact manifolds and $Ric\left( {X,AX} \right)>0$ for any non zero vector  $ X $, the splitting theorem get the following extension of Sormani's theorem \cite{sormani2001loops} and a result similar to Theorem 6.8 of \cite{wei2007comparison}.
\begin{theorem}Let $M$ is a complete and non-compact manifold and $Ric\left( {X,AX} \right)>0$ for any non zero vector  $ X $, then
\begin{itemize}
\item[a)]$M$ has only one end.
\item[b)]$M$ has the loops to infinity property. In particular, if $M$ is simply connected at infinity then $M$ is simply
connected.
\end{itemize}
\end{theorem}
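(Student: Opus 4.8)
The plan is to derive both statements by contradiction, using the extended Cheeger--Gromoll splitting Theorem \ref{Cheeger} as the main engine, exactly as in the classical case but now reading off the obstruction from the extended Ricci tensor $Ric(X,AX)$.

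For part (a), suppose $M$ has at least two ends. I would first build a line: choose sequences $p_i,q_i\to\infty$ escaping two distinct ends and join them by minimal geodesics $\sigma_i$. Because the two ends are separated by a fixed compact set, each $\sigma_i$ meets a fixed ball $B(x_0,\rho)$; choosing $m_i\in\sigma_i\cap\overline{B(x_0,\rho)}$ and passing to a subsequence so that the $m_i$ and the unit tangents converge, the limit is a complete geodesic $\gamma$ that is minimizing on every compact subinterval (since $d(m_i,p_i),d(m_i,q_i)\to\infty$), i.e.\ a line. As $Ric(Y,AY)>0$ implies $Ric(Y,AY)\ge 0$, Theorem \ref{Cheeger} applies and yields the isometric splitting $M=N^{n-1}\times\mathbb{R}$ together with the associated Busemann function $b^+$. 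The crucial point is already contained in the proof of Theorem \ref{Cheeger}, where one obtains
\[0 \ge Trace\left(A\circ hess^2(b^+)\right) + Ric(\nabla b^+,A\nabla b^+).\]
Since $A$ is positive definite and $hess^2(b^+)$ is positive semidefinite, the first summand is nonnegative, forcing $Ric(\nabla b^+,A\nabla b^+)\le 0$. But $\nabla b^+$ is a \emph{unit} vector field by Corollary \ref{unitbgr}, so the strict hypothesis gives $Ric(\nabla b^+,A\nabla b^+)>0$, a contradiction. Hence $M$ has exactly one end.

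For part (b) I would adapt Sormani's argument \cite{sormani2001loops} and again transfer the obstruction to the universal cover. Let $\Phi:\tilde M\to M$ be the universal covering and set $\bar A:=\Phi^*A$; exactly as in the proof of Theorem \ref{Meyer's theorem}, $\bar A$ is again a positive definite bounded Codazzi tensor and $\overline{Ric}(\Phi^*X,\bar A\,\Phi^*X)=Ric(X,AX)>0$, so every hypothesis lifts to $\tilde M$. Suppose $M$ fails the loops to infinity property for some nontrivial $g\in\pi_1(M,p)$: then there is a compact set that every loop freely homotopic to $g$ must meet. Following Sormani, this forces the deck transformation $\tilde g$ corresponding to $g$ to admit an invariant axis, obtained as the limit of minimal geodesics from $\tilde p$ to $\tilde g^{\,k}\tilde p$; the failure of the property is precisely what keeps these segments in a bounded region after translation, so the limit is a genuine line in $\tilde M$. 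Applying the splitting argument of part (a) to $\tilde M$ equipped with $\bar A$ then produces a unit direction along which $\overline{Ric}(\cdot,\bar A\,\cdot)$ vanishes, contradicting its strict positivity. Therefore $M$ has the loops to infinity property, and the final assertion is then formal: if $M$ is simply connected at infinity, any loop may be freely homotoped into a neighborhood of infinity where it bounds a disk, hence is null-homotopic, so $\pi_1(M)=0$.

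The main obstacle I anticipate is the line-construction step of part (b). Unlike part (a), where the line comes for free from two ends, here one must convert the homotopy-theoretic failure of the loops to infinity property into a genuine metric line, which requires the displacement-function analysis of Sormani together with a verification that every geometric input---Busemann functions, asymptotic rays, and the splitting itself---remains valid for the extended Ricci tensor $\overline{Ric}(\cdot,\bar A\,\cdot)$ rather than the ordinary Ricci tensor. Confirming that $\bar A$ inherits the Codazzi and boundedness properties needed to invoke Theorem \ref{Cheeger} on $\tilde M$ is routine but essential to the argument.
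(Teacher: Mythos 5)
Your proposal is correct and follows essentially the route the paper intends: the paper states this result without a detailed proof, indicating only that it follows from the extended splitting Theorem \ref{Cheeger} by lifting $A$ to the universal cover as in the proof of Theorem \ref{Meyer's theorem} and running the arguments of Sormani \cite{sormani2001loops} and Wei--Wylie \cite{wei2007comparison}, which is exactly what you do. Your observation that the contradiction is read off from the inequality $0 \ge Trace\left(A\circ hess^2(b^+)\right) + Ric(\nabla b^+,A\nabla b^+)$ together with $\left\|\nabla b^+\right\|=1$ is the right way to exploit the strict positivity of $Ric(\cdot,A\cdot)$.
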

\section{excess functions and applications}\label{sec8}
Excess functions are important in the study of topology of manifolds, so the upper bounds of them are interesting. Let $p,q \in M$ be two point, we recall that the excess function $ {e_{p,q}}(x) $  is defined as
\begin{equation}
{e_{p,q}}(x): = d(p,x) + d(q,x) - d(p,q).
\label{defexcess}
\end{equation}

As usual, to estimate of excess function we need an extension of Abresch-Gromoll quantitative maximal principle.  The proof is an adaptation of the exposition contained in Abresch-Gromoll \cite {abresch1990complete} or Cheeger’s \cite{cheeger2004degeneration} works. At first we recall the following definition.
\begin{definition} The dilation of a function $ f $ is denoted by $ dil(f) $ and is defined as
\[dil(f) = \mathop {\min }\limits_{x,y} \frac{{\left| {f(x) - f(y)} \right|}}{{d(x,y)}}.\]
\end{definition}
\begin{prop}( Quantitative Maximal Principle )\label{Quantitative Maximal Principle} Let $ U:B(y,R + \eta ) \to \mathbb{R} $ is a Lipschitz function on $ M $. For $H \le 0$, Assume $\left( {n - 1} \right){\delta _n}H \le Ric\left( {{\partial _r},A{\partial _r}} \right)$, $\left| {{f^A}} \right| \le K$ and
\begin{itemize}
\item[a)] $U \ge 0$,
\item[b)] $dil(U) \le a$, $U({y_0}) = 0$, where ${y_0}$ is a point in $\overline {B(y,R)} $.
\item[b)]${\Delta _{A,{f^A}}}(U) \le b$ in the barrier sense,
\end{itemize}
then $U(y) \le ac +G(c)$ for all $0 < c < R $ where $G(r(x))$ is the unique function on $M_H^n$ such that
\begin{itemize}
\item[1)]$G(r) > 0$ for $0 < r < R$,
\item[2)]$G'(r) < 0$ for $0 < r < R$,
\item[3)]$G(R) = 0$,
\item[4)]$\left( {{\delta _1} - {\delta _n}\left( {1 + \frac{{4K}}{{{\delta _n}(n - 1)}}} \right)} \right)G'' + {\delta _n}\left( {1 + \frac{{4K}}{{{\delta _n}(n - 1)}}} \right){\Delta _H}G = b$, where
${\Delta _H}$ is the Laplace operator on the model space $M_H^n$.
\end{itemize}
\end{prop}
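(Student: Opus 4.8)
The plan is to adapt the Abresch--Gromoll quantitative maximal principle \cite{abresch1990complete,cheeger2004degeneration,wei2007comparison} to the operator $\Delta_{A,f^A}$, with the classical Laplacian comparison replaced by the extended mean curvature comparison of Theorem \ref{Meancurvaturecomparison}(a). Writing $r(x)=d(y_0,x)$ for the distance from the zero $y_0$ of $U$ and setting $\lambda:=\delta_n\left(1+\frac{4K}{\delta_n(n-1)}\right)$, the hypotheses $(n-1)\delta_n H\le Ric(\partial_r,A\partial_r)$ and $|f^A|\le K$ give, along minimal geodesics issuing from $y_0$, the inequality $\Delta_{A,f^A}r\le \lambda\,\Delta_H r$ in the barrier sense (Definition \ref{barrier}). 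The comparison function $G$ is then the solution on $[0,R]$ of the ODE in condition (4): expanding $\Delta_H G=G''+(n-1)\frac{sn_H'}{sn_H}G'$ this reads $\delta_1 G''+\lambda(n-1)\frac{sn_H'}{sn_H}G'=b$, and it is singled out by $G(R)=0$ together with regularity at the origin. Using the integrating factor $sn_H^{\lambda(n-1)/\delta_1}$, which turns the equation into $\big(sn_H^{\lambda(n-1)/\delta_1}G'\big)'=\delta_1^{-1}sn_H^{\lambda(n-1)/\delta_1}\,b$, one verifies the sign/monotonicity conditions (1)--(3), namely $G>0$ and $G'<0$ on $(0,R)$.

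The key computation is that $G\circ r$ is a supersolution of $\Delta_{A,f^A}$. For a radial function one has
\[
\Delta_{A,f^A}(G\circ r)=G''\,\langle\nabla r,A\nabla r\rangle+G'\,\Delta_{A,f^A}r .
\]
Because $\delta_1\le\langle\nabla r,A\nabla r\rangle\le\delta_n$ by Definition \ref{minmax}, and because $G'<0$ reverses the mean curvature inequality upon multiplication, so that $G'\Delta_{A,f^A}r\ge \lambda(n-1)\frac{sn_H'}{sn_H}G'$, the ODE (4) is \emph{exactly} calibrated (this is the role of the coefficients $\delta_1-\lambda$ and $\lambda$) so that the right-hand side is bounded below by $\delta_1 G''+\lambda(n-1)\frac{sn_H'}{sn_H}G'=b$. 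Hence $\Delta_{A,f^A}(G\circ r)\ge b\ge \Delta_{A,f^A}U$ in the barrier sense, and by the equivalence of the weak notions (Definitions \ref{barrier}, \ref{distribution}) the comparison can be carried out against the weak maximum principle of Theorem \ref{Maximum principle} together with Lemma \ref{maximumpoint}.

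Finally, for fixed $c\in(0,R)$ I would run the comparison on the annulus $\Omega=B(y_0,R)\setminus\overline{B(y_0,c)}$, which excises the singularity of $r$ (and the failure of the barrier inequality) at the center $y_0$. On $\partial B(y_0,c)$ the $a$-Lipschitz bound with $U(y_0)=0$ gives $U\le ac$, while on $\partial B(y_0,R)$ one uses $U\ge 0$ and $G(R)=0$; comparing $U$ with $ac+G(r)$ against these boundary data and the supersolution property then yields $U(y)\le ac+G(c)$ at the interior point $y$ (recall $d(y_0,y)\le R$). The main obstacle is precisely this last step: since only a \emph{one-sided} mean curvature bound is available, $G\circ r$ is merely a supersolution, so the desired \emph{upper} estimate at $y$ cannot be read off from the maximum principle applied to $U$ alone, but must be extracted from the delicate interplay of the supersolution barrier with the three constraints $U\ge 0$, $U(y_0)=0$ and $dil(U)\le a$ on the annulus. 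This boundary bookkeeping, carried out in the weak senses so as to absorb the cut locus of $y_0$, is the heart of the Abresch--Gromoll technique being extended here.
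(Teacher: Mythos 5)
Your construction of $G$ and the barrier inequality $\Delta_{A,f^A}(G\circ r)\ge b$ (via Theorem \ref{Meancurvaturecomparison}(a), Lemma \ref{weakinequalityba} applied with $G'<0$, and the calibration of condition (4) so that $\langle\nabla r,A\nabla r\rangle G''+\delta_n\bigl(1+\tfrac{4K}{\delta_n(n-1)}\bigr)\Delta_H r\,G'\ge b$) coincide with the paper's argument. But the last step, which you yourself flag as ``the main obstacle,'' is exactly where the proof is incomplete, and the set-up you chose cannot be completed. There are two problems. First, the distance function must be taken from $y$, not from $y_0$: the comparison is run on the annulus $A(y,c,R)=\{x: c<d(y,x)<R\}$ centered at the point where $U$ is to be estimated, so that the outer boundary condition $G(R)=0$ and the inner sphere $\partial B(y,c)$ sit in the right places; with your choice $r=d(y_0,\cdot)$ the value $G(d(y_0,y))$ bears no useful relation to $G(c)$. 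Second, and more seriously, the function you propose to control, $U-(ac+G\circ r)$, satisfies $\Delta_{A,f^A}\bigl(U-ac-G\circ r\bigr)\le b-b=0$; that is the wrong sign for the maximum principle to propagate boundary upper bounds inward, as you concede. The one-sided inequalities do close up, but only if you apply the maximum principle to the \emph{other} difference, $V:=G\circ d(y,\cdot)-U$, which satisfies $\Delta_{A,f^A}V\ge 0$ and hence, by Theorem \ref{Maximum principle}, attains its maximum over $A(y,c,R)$ on $\partial B(y,c)\cup\partial B(y,R)$.

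The missing bookkeeping is then short but essential: on $\partial B(y,R)$ one has $V=-U\le 0$, while $V(y_0)=G(d(y,y_0))-U(y_0)\ge 0$; so if $y_0\in A(y,c,R)$ the maximum must be attained at some $y_1\in\partial B(y,c)$ with $V(y_1)\ge V(y_0)\ge 0$, i.e. $U(y_1)\le G(c)$. The dilation bound is then used not to prescribe boundary data on the inner sphere around $y_0$, as in your draft, but to transport the estimate from $y_1$ back to $y$ at the very end: $U(y)\le U(y_1)+a\,d(y,y_1)=U(y_1)+ac\le G(c)+ac$. (If instead $y_0\in B(y,c)$, then directly $U(y)=U(y)-U(y_0)\le a\,d(y,y_0)\le ac\le ac+G(c)$.) Without this reorientation --- annulus centered at $y$, maximum principle applied to $G-U$ rather than to $U$ minus a barrier, and the Lipschitz bound invoked at the end rather than at the beginning --- the conclusion $U(y)\le ac+G(c)$ does not follow from what you have written.
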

\begin{proof}At first, for late use we construct $ G $ explicitly. Since ${\Delta _H} = \frac{{{\partial ^2}}}{{\partial {r^2}}} + {m_H}(r)\frac{\partial }{{\partial r}} + \tilde \Delta $. Its sufficient to solve the ODE
\[\left( {{\delta _1} - {\delta _n}\left( {1 + \frac{{4K}}{{{\delta _n}(n - 1)}}} \right)} \right)G'' + {\delta _n}\left( {1 + \frac{{4K}}{{{\delta _n}(n - 1)}}} \right)\left( {G'' + {m_H}(r)G'} \right) = b,\]
or equivalently,
\begin{equation}
{\delta _1}G'' + {\delta _n}\left( {1 + \frac{{4K}}{{{\delta _n}(n - 1)}}} \right){m_H}(r)G' = b.
\label{AGM1}
\end{equation}
For $ H=0 $ we know ${m_H}(r) = \frac{{n - 1}}{r}$, so by (\ref{AGM1}) we have
\[{\delta _1}G'' + {\delta _n}\left( {1 + \frac{{4K}}{{{\delta _n}(n - 1)}}} \right)\frac{{n - 1}}{r}G' = b,\]
 or equivalently
 \[{\delta _1}G''{r^2} + \left( {n - 1} \right){\delta _n}\left( {1 + \frac{{4K}}{{{\delta _n}(n - 1)}}} \right)G'r = b{r^2},\]
which is an Euler-type ODE.
For $n \ge 3$, the solutions of this ODE are as,
\[G = \frac{b}{{2C}}{r^2} + {c_1} + {c_2}{r^D},\]
where
\[C = {\delta _1} + \left( {n - 1} \right){\delta _n}\left( {1 + \frac{{4K}}{{{\delta _n}(n - 1)}}} \right)\,\,\,\,\,\,and\,\,\,\,\,D = 1 - \left( {n - 1} \right)\frac{{{\delta _n}}}{{{\delta _1}}}\left( {1 + \frac{{4K}}{{{\delta _n}(n - 1)}}} \right).\]
 Now $G(R) = 0$, gives
\[\frac{b}{{2C}}{R^2} + {c_1} + {c_2}{R^D} = 0.\]
 By assumption $G'(r) < 0$, so for $0 < r < R$ one should have,
\[\frac{b}{C}r + D{c_2}{r^{D - 1}} \le 0.\]
 Thus, ${c_2} \ge  - \frac{b}{{CD}}{R^{ - D + 2}}$. So for $ H=0 $, we have,
 \[G(r) = \frac{b}{{2C}}\left( {{r^2} + \left( { - 1 + \frac{2}{D}} \right){R^2} - \frac{1}{D}{R^{ - D + 2}}{r^D}} \right).\]
 For $ H<0 $, we have,${m_H}(r) = \left( {n - 1} \right)\sqrt { - H} \frac{{\cosh (\sqrt { - H} r)}}{{\sinh (\sqrt { - H} r)}}$. So by (\ref{AGM1}) we have,
\[{\delta _1}G'' + {\delta _n}\left( {n + \frac{{4K}}{{{\delta _n}}} - 1} \right)\sqrt { - H} \frac{{\cosh (\sqrt { - H} )}}{{\sinh (\sqrt { - H} r)}}G' = b.\]
 Thus,
\[G(r) = \frac{b}{{{\delta _1}}}\int_r^R {\int_r^t {{{\left( {\frac{{\sinh (\sqrt { - H} t)}}{{\sinh (\sqrt { - H} s)}}} \right)}^{\frac{{{\delta _n}}}{{{\delta _1}}}\left( {1 + \frac{{4K}}{{{\delta _n}(n - 1)}}} \right)}}dsdt} } .\]
 Now we return to prove the result of the theorem. By conditions on $ G $ and Lemma \ref{weakinequalityba} \[{\Delta _{A,{f^A}}}G \ge \left( {{\delta _1} - {\delta _n}\left( {1 + \frac{{4K}}{{{\delta _n}(n - 1)}}} \right)} \right)G'' + {\delta _n}\left( {1 + \frac{{4K}}{{{\delta _n}(n - 1)}}} \right){\Delta _H}G.\]
 Define $V: = G -U$, so
\begin{eqnarray*}
   {{\Delta _{A,{f^A}}}V}  & =&{  {\Delta _{A,{f^A}}}G - {\Delta _{A,{f^A}}}U} \hfill  \\
  &  \ge& {\left( {{\delta _1} - {\delta _n}\left( {1 + \frac{{4K}}{{{\delta _n}(n - 1)}}} \right)} \right)G'' + {\delta _n}\left( {1 + \frac{{4K}}{{{\delta _n}(n - 1)}}} \right){\Delta _H}G - {\Delta _{A,{f^A}}}U \ge 0.}
\end{eqnarray*}
By maximum principle theorem \ref{Maximum principle}, the function $ V $ on $A(y,c,R) = \left\{ {x:c < d(y,x) < R} \right\}$ takes its maximum on $\partial B(y,c) \cup \partial B(y,R)$. But ${\left. V \right|_{\partial B(y,R)}} \le 0$ and
 $\,V({y_0}) \ge 0$, so if ${y_0} \in A(y,c,R)$, then there exists some ${y_1} \in \partial B(y,c)$ such that $\,V({y_1}) \ge \,V({y_0}) \ge 0$. Since
 \[U(y) - U({y_1}) \le a\,d(y,{y_1}) = ac\]
 and
\[0 \le V({y_1}) = G({y_1}) - U({y_1}).\]
So
\[U(y) \le ac + U({y_1}) = ac + \left( {G({y_1}) - V({y_1})} \right) \le ac + G(c).\]
Also, if ${y_0} \in B(y,c)$ then
\[U(y) = U(y) - U({y_0}) \le a{\mkern 1mu} d(y,{y_0}) \le ac \le ac + G(c).\]
\end{proof}
By Proposition \ref{Quantitative Maximal Principle} we get the following upper estimate for the excess function. To do this we define the height function $h(x): = dist(x,\gamma )$, where $dist(x,\gamma )$ is any fixed minimal geodesic from $ p $ to $ q $.
\begin{theorem}\label{excessfunctionestimate} Let $Ric\left( {{\partial _r},A{\partial _r}} \right) \ge 0$, $\left| {{f^A}} \right| \le K$ and $h(x) \le \min \{ d(p,x),d(q,x)\}$  then
\[{e_{p,q}}(x) \le 2\left( {\frac{{{\delta _n}(n - 1) + 4K + {\delta _1}}}{{{\delta _n}(n - 1) + 4K - {\delta _1}}}} \right){\left( {\frac{1}{2}C{h^{\frac{{{\delta _1} + {\delta _n}(n - 1) + 4K}}{{{\delta _1}}}}}} \right)^{\frac{{{\delta _1}}}{{{\delta _n}(n - 1) + 4K}}}},\]
where
\[C = \frac{{{\delta _n}(n - 1) + 4K}}{{2(n - 1)\left( {{\delta _1} + {\delta _n}(n - 1) + 4K} \right)}}\left( {\frac{1}{{d(p,x) - h(x)}} + \frac{1}{{d(q,x) - h(x)}}} \right).\]
\end{theorem}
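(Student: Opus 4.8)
The plan is to apply the extended Quantitative Maximal Principle of Abresch--Gromoll (Proposition \ref{Quantitative Maximal Principle}) to the excess function $U := e_{p,q}$ of (\ref{defexcess}), taking as center the point $x$ at which we estimate and as outer radius $R := h(x) = dist(x,\gamma)$. Writing $r_1(z) := d(p,z)$ and $r_2(z) := d(q,z)$, the triangle inequality gives $U \ge 0$, while $|U(z)-U(w)| \le |r_1(z)-r_1(w)| + |r_2(z)-r_2(w)| \le 2\,d(z,w)$ shows $dil(U) \le 2$, so the dilation constant is $a = 2$. Letting $y_0 \in \gamma$ be the foot of a minimal segment from $x$ to $\gamma$, we have $d(x,y_0) = h(x) = R$, hence $y_0 \in \overline{B(x,R)}$ and $U(y_0) = 0$, which is the required vanishing point.

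Next I would extract the constant $b$. Because $Ric(\partial_r, A\partial_r) \ge 0$, we are in the case $H = 0$ of the extended mean curvature comparison (Theorem \ref{Meancurvaturecomparison}(a)), where $\Delta_H r = (n-1)/r$; passing to the barrier sense on all of $M$ via Lemma \ref{weakinequalityba} (applied to $v = \mathrm{id}$, so $v''=0$, $v'=1$) gives, for $i = 1,2$,
\[
\Delta_{A,f^A} r_i \le \delta_n\Big(1 + \frac{4K}{\delta_n(n-1)}\Big)\frac{n-1}{r_i} = \frac{\delta_n(n-1)+4K}{r_i}.
\]
Set $\alpha := \delta_n(n-1)+4K$. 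Summing, $\Delta_{A,f^A} U \le \alpha(r_1^{-1} + r_2^{-1})$ in the barrier sense. On $B(x,h(x))$ any point $z$ obeys $r_i(z) \ge r_i(x) - h(x)$, and the hypothesis $h(x) \le \min\{d(p,x),d(q,x)\}$ makes these lower bounds positive, so that
\[
\Delta_{A,f^A} U \le \alpha\Big(\frac{1}{d(p,x)-h(x)} + \frac{1}{d(q,x)-h(x)}\Big) =: b
\]
holds throughout the relevant ball; this is the admissible value of $b$.

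With $a = 2$, $R = h(x)$ and this $b$, Proposition \ref{Quantitative Maximal Principle} delivers $e_{p,q}(x) \le 2c + G(c)$ for every $0 < c < h(x)$, where $G$ is the explicit $H = 0$ solution constructed in that proof, $G(r) = \frac{b}{2C_0}\big(r^2 + (\tfrac{2}{D}-1)R^2 - \tfrac{2}{D}R^{2-D}r^D\big)$, with $C_0 = \delta_1 + \alpha$ and $D = 1 - \alpha/\delta_1 < 0$ (this normalization is the one that enforces $G(R)=0$). The final step is to estimate this $G(c)$ for small $c$, where the singular term $R^{2-D}c^D$ dominates, and to minimize $2c + G(c)$ in $c$. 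Since $1 - D = \alpha/\delta_1$ and $2 - D = (\delta_1+\alpha)/\delta_1$, the stationary radius scales like $c^{\ast} \sim \big(b\, h^{(\delta_1+\alpha)/\delta_1}\big)^{\delta_1/\alpha}$, and resubstituting produces a bound of the form $e_{p,q}(x) \le (\text{const})\,\big(C\, h^{(\delta_1+\alpha)/\delta_1}\big)^{\delta_1/\alpha}$, in which the exponent $\delta_1/\alpha = \delta_1/(\delta_n(n-1)+4K)$ and the quantity $C$ of the statement are precisely the outputs of this optimization.

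The two barrier verifications and the reduction to the maximal principle are routine. The genuine obstacle is the last step: one must retain all three competing terms of $G$ --- in particular the negative constant $(\tfrac{2}{D}-1)R^2$, without which the decay $G(R)=0$ is lost --- balance the linear term $2c$ against the singular term at the optimal $c$, and carry the constants through $C_0 = \delta_1 + \alpha$ and $D = 1 - \alpha/\delta_1$ so that the prefactor $2\frac{\alpha+\delta_1}{\alpha-\delta_1}$ and the constant $C$ emerge with the correct $(n-1)$- and $\delta_1$-dependence claimed in the statement.
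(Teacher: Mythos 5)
Your proposal follows the paper's proof essentially verbatim: apply Proposition \ref{Quantitative Maximal Principle} to $U=e_{p,q}$ with dilation $a=2$, outer radius $R=h(x)$, $y_0$ the foot of $x$ on $\gamma$, extract $b$ from the $H=0$ case of the extended mean curvature comparison, and then minimize $2c+G(c)$ using the explicit Euler-type solution $G$ with $C_0=\delta_1+\alpha$ and $D=1-\alpha/\delta_1$ (your normalization of the $r^D$ coefficient is indeed the one forcing $G(R)=0$). The only divergence is bookkeeping: you correctly use $\Delta_H r=(n-1)/r$, so your $b=\alpha\bigl(\tfrac{1}{d(p,x)-h}+\tfrac{1}{d(q,x)-h}\bigr)$ carries an extra factor of $n-1$ relative to the paper's (which silently replaces $(n-1)/r$ by $1/r$ at this step), and that factor is precisely the $(n-1)$ sitting in the denominator of the stated $C$; carried through honestly, your optimization yields the same bound with $(n-1)C$ in place of $C$, so the discrepancy is a constant-level inconsistency in the paper rather than a gap in your argument.
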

\begin{proof} As original one, we use the extended Abresch and Gromoll's Quantitative Maximal Principle. Note that
$dil({e_{p,q}}) \le 2$. If we choose $  R=h(x) $, then for any $y \in B(x,R)$ we have
\begin {eqnarray*}
   {{\Delta _A}\left( {{e_{p,q}}(y)} \right)}  &  \le&  {{\delta _n}\left( {1 + \frac{{4K}}{{{\delta _n}(n - 1)}}} \right)\left( {\frac{1}{{d(p,y)}} + \frac{1}{{d(q,y)}}} \right)}   \\
  & \le&  {{\delta _n}\left( {1 + \frac{{4K}}{{{\delta _n}(n - 1)}}} \right)\left( {\frac{1}{{d(p,x) - h(x)}} + \frac{1}{{d(q,x) - h(x)}}} \right).} \end{eqnarray*}
So by choosing $  R=h(x) $ and $b: = {\delta _n}\left( {1 + \frac{{4K}}{{{\delta _n}(n - 1)}}} \right)\left( {\frac{1}{{d(p,x) - h(x)}} + \frac{1}{{d(q,x) - h(x)}}} \right)$, the conditions of Proposition \ref{Quantitative Maximal Principle} are satisfied. So
\[{e_{p,q}}(x) \le \mathop {\min }\limits_{0 \le r \le R} \left( {2r + G(r)} \right).\]
The function ${2r + G(r)}$, for $0 < r < R$ is convex, so its minimum is assumed at the unique $0 < {r_0} < R$, where $2 + G'({r_0})=0$, from this we conclude,
\begin{equation}
2r_0^{1 - D} + \frac{b}{{2C}}\left( {2r_0^{2 - D} - {R^{2 - D}}} \right) = 0,
\label{excess01}
\end{equation}
where $ C,D $ are defined in Proposition \ref{Quantitative Maximal Principle}. By (\ref {excess01}) we get,
\[r_0^{} \le {\left( {\frac{b}{{4C}}{R^{2 - D}}} \right)^{\frac{1}{{1 - D}}}}.\]
Consequently,
\begin{eqnarray*}
   {{e_{p,q}}(x)} \hfill &  \le& {2{r_0} + G({r_0}) = \left( {1 - \frac{2}{D}} \right)\left[ {2{r_0} + \frac{b}{C}\left( {r_0^2 - {R^2}} \right)} \right] \le 2\left( {1 - \frac{2}{D}} \right){r_0}}   \\
 &  \le& {2\left( {1 - \frac{2}{D}} \right){{\left( {\frac{b}{{4C}}{R^{2 - D}}} \right)}^{\frac{1}{{1 - D}}}}.}
\end{eqnarray*}
\end{proof}
So applying the estimate of excess function Theorem \ref{excessfunctionestimate} gets an extension of theorems of Abresch-Gromoll \cite{abresch1990complete} and Sormani \cite{sormani1998nonnegative} as follows.
\begin{theorem}Let  $ M $ be a complete non compact manifold with $Ric\left( {{\partial _r},A{\partial _r}} \right) \ge 0$ then,
\begin{itemize}
\item[a)]If $ M $ has bounded diameter growth and sectional curvature bounded below then it has finite type topology, i.e, it is homeomorphic to the interior of a compact manifold with boundary.
\item[b)] If it has sub-linear diameter growth, then its fundamental group is finitely generated.
\end{itemize}
\end{theorem}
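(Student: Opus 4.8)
The plan is to adapt the two classical arguments the theorem extends: the finite-topological-type theorem of Abresch--Gromoll \cite{abresch1990complete} for part (a), and Sormani's finite-generation theorem \cite{sormani1998nonnegative} for part (b). In each case the only analytic input that depends on curvature is the excess estimate, and the hypothesis $Ric(\partial_r, A\partial_r) \ge 0$ is exactly what the extended excess estimate of Theorem \ref{excessfunctionestimate} requires. Throughout, the classical effective dimension $n-1$ is replaced by the quantity $\frac{\delta_n(n-1)+4K}{\delta_1}$ coming from the constants $C,D$ of Proposition \ref{Quantitative Maximal Principle}; and, as in the proof of Theorem \ref{Meyer's theorem}, passing to the universal cover and transporting $A$ to $\bar A = \Phi^* A$ preserves the hypothesis $\bar{Ric}(\partial_r, \bar A \partial_r) \ge 0$, so the arguments upstairs are legitimate.

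For part (a), I would first invoke the Grove--Shiohama critical point theory for the distance function $r = d(x_0, \cdot)$ from a fixed base point, which is purely metric and hence unaffected by the operator $L_A$: if an annulus $A(x_0; r_1, r_2)$ contains no critical points of $r$, the isotopy lemma makes it homeomorphic to $\partial B(x_0, r_1) \times [r_1, r_2]$, and finite topological type follows once one produces a radius $R_0$ beyond which $r$ has no critical points. The core step is to rule out far-out critical points: for a critical $x$ with $r(x)$ large, the sectional curvature lower bound, through Toponogov angle comparison, forces a definite lower bound on an excess $e_{p,q}(x)$ built from a triple $(p,x,q)$ along a suitable ray, while the \emph{bounded diameter growth} hypothesis bounds the height $h(x)=\mathrm{dist}(x,\gamma)$; feeding this into Theorem \ref{excessfunctionestimate} yields an upper bound for $e_{p,q}(x)$ that tends to zero as $d(p,x),d(q,x)\to\infty$, contradicting the lower bound once $r(x)$ exceeds some $R_0$.

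For part (b) I would follow Sormani's scheme, which needs only the Ricci-type lower bound and not a sectional bound. Generators of $\pi_1(M)$ are represented by shortest geodesic loops at $x_0$ via the uniform cut (halfway) lemma, and the extended excess estimate is used to show that two such generators whose lengths are sufficiently close and whose turning points lie far out along a ray must coincide modulo relations already counted. Here \emph{sub-linear diameter growth} is what guarantees that $h(x)$ grows slowly enough that the excess bound of Theorem \ref{excessfunctionestimate} decays faster than the competing metric quantities, which is precisely the input needed to bound the number of generators and conclude that $\pi_1(M)$ is finitely generated.

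The main obstacle I anticipate is the bookkeeping of the modified exponents. In Theorem \ref{excessfunctionestimate} the power of the height is $\frac{\delta_1 + \delta_n(n-1)+4K}{\delta_n(n-1)+4K}$ rather than the classical $\frac{n}{n-1}$, and the factor $C$ contributes the additional weight $\left(\frac{1}{d(p,x)-h}+\frac{1}{d(q,x)-h}\right)^{\delta_1/(\delta_n(n-1)+4K)}$. Consequently the quantitative meaning of ``bounded'' and ``sub-linear'' diameter growth must be calibrated to these anisotropic exponents (equivalently, to the effective dimension governed by $\frac{\delta_n(n-1)+4K}{\delta_1}$) so that the smallness of the excess genuinely overcomes the growth of $d(x_0,x)$. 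By contrast, verifying that the isotopy lemma and Toponogov comparison interact correctly with $L_A$ is routine, since criticality and angle estimates depend only on the fixed Riemannian metric and not on $A$.
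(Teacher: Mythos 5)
Your proposal follows exactly the route the paper intends: the paper offers no written proof beyond the remark that the statement follows by ``applying the estimate of excess function Theorem \ref{excessfunctionestimate}'' to extend the arguments of Abresch--Gromoll and Sormani, which is precisely your plan of feeding the extended excess bound into the classical critical-point/Toponogov argument for (a) and Sormani's halfway-lemma argument for (b). Your additional caution about recalibrating the growth hypotheses to the modified exponents (effective dimension $\frac{\delta_n(n-1)+4K}{\delta_1}$ in place of $n-1$) is a genuine point the paper glosses over, but it does not change the approach.
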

\section{Numbers of ends}\label{sec9}
In this section we get an estimate on the number of ends of a complete Riemannian manifold $ M $ by some restrictions on $Ri{c_A}$. The approach is similar to \cite{cai1991ends}. In fact M.Cai invented this method to estimate the number of ends of a noncompact manifold which its Ricci curvature is non-negative out-side of a compact set by means of a lower bound of the Ricci curvature in the compact set and the diameter of the set. Recently  J. Wu  applied this method to get an upper estimate by similar conditions on the Bakry-Emery Ricci tensor $Ri{c_f}$ and some conditions on the energy function \cite{wu2016counting}. Similarly we use the method to get our estimate.
\\ First we recall the definition of ends of manifolds.
\begin{definition}\cite{cai1991ends} Let ${\gamma _1},{\gamma _2}$ be two rays starting from a fixed point $p \in M$. We say ${\gamma _1},{\gamma _2}$ are co-final if for each $R > 0$ and any $t \ge R$, ${\gamma _1}(t)$ and ${\gamma _2}(t)$ are in the same component of $M\backslash B(x_0,r)$. Each equivalence class of co-final rays is called an end of $ M $. The end included the ray $\gamma $ is noted by $\left[ \gamma  \right]$.
\end{definition}
To get the estimate, we extend the following lemmas for the extended Ricci tensor $Ri{c_A}$. The proofs of the following lemmas are similar to the corresponding for Ricci \cite{cai1991ends} or weighted Ricci tensor \cite{wu2016counting}, so they are omitted.
\begin{lem} Let $ N $ be a $\delta  -$tubular neighborhood of a line $ \gamma $. Suppose that from every point $p  $ in N, there are asymptotic rays to ${\gamma ^ \pm }$ such that $Ri{c_{Trace(A)}}(X,AX) \ge 0$ on both asymptotic rays. Then through every point in N, there is a line $\alpha $ which if it is parameterized properly, then satisfies
\[b_\gamma ^ + ({\alpha ^ + }(t)) = t\,\,\,\,\,\,\,\,and\,\,\,\,\,\,b_\gamma ^ - ({\alpha ^ - }(t)) = t.\]
\end{lem}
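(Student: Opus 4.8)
The plan is to identify the asymptotic rays $\alpha^{\pm}$ supplied by the hypothesis with the two halves of a single integral curve of $\nabla b_\gamma^+$, and to show that this curve is a globally minimizing geodesic along which the Busemann functions grow at unit rate. First I would fix $p\in N$ and recall the purely metric property of asymptotic rays: a Busemann function increases at unit speed along any of its asymptotic rays, so that $b_\gamma^+(\alpha^+(t))=b_\gamma^+(p)+t$ and $b_\gamma^-(\alpha^-(t))=b_\gamma^-(p)+t$, with no curvature input. Combined with the triangle inequality $b_\gamma^+ + b_\gamma^- \ge 0$, which has equality along $\gamma$, this already pins down the desired unit growth once the two rays are shown to match up into a line.

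The analytic core is to promote these one-dimensional facts to a splitting of the gradient field. Along the asymptotic rays the hypothesis $Ric_{Trace(A)}(X,AX)\ge 0$ lets me invoke the extended mean curvature comparison of Theorem \ref{Meancurvaturecomparison}(b); arguing as in Proposition \ref{busemansuphar}, each $b_\gamma^{\pm}$ is then a subsolution of the relevant extended operator in the barrier sense on $N$. Since $b_\gamma^+ + b_\gamma^-$ is a nonnegative subsolution vanishing along $\gamma\cap N$, the extended maximum principle (Theorem \ref{Maximum principle}) forces $b_\gamma^+ + b_\gamma^- \equiv 0$ on $N$, and the regularity result (Proposition \ref{regular}) makes both functions smooth with vanishing extended Laplacian, exactly as in Corollary \ref{busemansmooth}.

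With smoothness in hand, Corollary \ref{unitbgr} gives $|\nabla b_\gamma^{\pm}|=1$, so differentiating $|\nabla b_\gamma^+|^2\equiv 1$ yields $\nabla_{\nabla b_\gamma^+}\nabla b_\gamma^+ = 0$ and the integral curves of $\nabla b_\gamma^+$ are unit-speed geodesics. Because $b_\gamma^- = -b_\gamma^+$ on $N$, the integral curve through $p$ is simultaneously a gradient curve of $\nabla b_\gamma^+$ and of $-\nabla b_\gamma^-$; its two halves realize the unit growth of $b_\gamma^+$ and of $b_\gamma^-$, hence coincide with $\alpha^+$ and $\alpha^-$ and join into one smooth geodesic $\alpha$. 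That $\alpha$ is a line follows since $b_\gamma^+$ is $1$-Lipschitz and grows at unit rate: for $s<t$, $d(\alpha(s),\alpha(t)) \ge b_\gamma^+(\alpha(t))-b_\gamma^+(\alpha(s)) = t-s$, while the reverse inequality is automatic for a unit-speed geodesic, so $\alpha$ is minimizing. Fixing the origin of the parameter at the point where $b_\gamma^+=0$ (equivalently, normalizing the additive constant) gives $b_\gamma^+(\alpha^+(t))=t$ and, via $b_\gamma^- = -b_\gamma^+$, also $b_\gamma^-(\alpha^-(t))=t$.

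I expect the main obstacle to lie in Step 2, namely matching the curvature hypothesis to the correct operator. The splitting machinery of Section \ref{sec7} (Proposition \ref{busemansuphar}, Corollary \ref{busemansmooth}, Corollary \ref{unitbgr}) was developed for the condition $Ric(\partial_r,A\partial_r)\ge 0$ and the operator $\Delta_{A,f^A}$, whereas the present hypothesis is stated with $Ric_{Trace(A)}$, which is the condition attached to $L_A$ in Theorem \ref{Meancurvaturecomparison}(b). One must therefore re-run the superharmonicity and maximum-principle arguments for the operator associated with $L_A$ (after absorbing the $Trace(A)$ and $f^A$ terms into a weight) and check that the barrier-sense comparison survives when the curvature bound is assumed only along the asymptotic rays rather than globally. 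The remaining steps are the standard Cai--Wu gradient-flow argument and are essentially formal, which is consistent with the authors' remark that the proof parallels \cite{cai1991ends,wu2016counting} and may be omitted.
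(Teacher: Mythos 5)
The paper itself omits the proof of this lemma, pointing to Cai \cite{cai1991ends} and Wu \cite{wu2016counting}; your skeleton (superharmonicity of the Busemann functions from the curvature bound along the asymptotic rays, a maximum principle to force $b^+_\gamma+b^-_\gamma\equiv 0$ on $N$, then assembling the two asymptotic rays into a line) is indeed the intended Cai-type argument, and your observation that the hypothesis $Ric_{Trace(A)}(X,AX)\ge 0$ pairs with part (b) of Theorem \ref{Meancurvaturecomparison} and the operator $L_{A,f^A}$ rather than with Proposition \ref{busemansuphar} is a correct and worthwhile flag. However, there are two genuine problems. First, the sign of the triangle inequality is reversed: with the paper's convention $b^\gamma(x)=\lim_t\left(t-d(x,\gamma(t))\right)$ one has $d(x,\gamma(t))+d(x,\gamma(-s))\ge t+s$, hence $b^+_\gamma+b^-_\gamma\le 0$ with equality on $\gamma$ (this is exactly how Corollary \ref{busemansmooth} uses it: the sum is a \emph{sub}solution attaining its \emph{maximum} $0$ on $\gamma$). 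As you wrote it --- a nonnegative subsolution vanishing along $\gamma$ --- the maximum principle gives nothing: a subsolution may well attain an interior minimum without being constant. The step only works with the correct sign.

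Second, your construction of the line via the integral curve of $\nabla b^+_\gamma$ has a gap: smoothness of $b^\pm_\gamma$ and $\|\nabla b^+_\gamma\|=1$ are obtained (via the maximum principle and Proposition \ref{regular}) only on $N$, because the curvature hypothesis is assumed only along asymptotic rays issued from points of $N$; nothing guarantees that the integral curve of $\nabla b^+_\gamma$ through $p$ stays in $N$, so $\nabla_{\nabla b^+}\nabla b^+=0$ cannot be asserted along the whole curve, and the identification of its two halves with $\alpha^\pm$ is circular. The gradient flow is in fact unnecessary. Once $\left(b^+_\gamma+b^-_\gamma\right)(p)=0$, the purely metric facts $b^\pm_\gamma(\alpha^\pm(t))=b^\pm_\gamma(p)+t$ give $b^+_\gamma(\alpha^-(s))=-b^-_\gamma(\alpha^-(s))\le b^+_\gamma(p)-s$, and the $1$-Lipschitz bound yields
\[d\left(\alpha^+(t),\alpha^-(s)\right)\ \ge\ b^+_\gamma(\alpha^+(t))-b^+_\gamma(\alpha^-(s))\ \ge\ t+s,\]
while the reverse inequality holds by concatenation through $p$; hence the concatenation of $\alpha^-$ and $\alpha^+$ is minimizing between any two of its points, i.e.\ a line, and is smooth a posteriori. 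This is the argument the paper's references actually run, and it avoids both of the issues above.
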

\begin{lem} With the same assumptions as in Theorem \ref{endofmanifold}, $ M $ can not admit a line $\gamma $ with the following property
\[d(\gamma (t),B(x_0,r)) \ge \left| t \right| + 2R\,\,\,\,\,\,\,for\,all\,\,t.\]
\end{lem}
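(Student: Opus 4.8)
The plan is to argue by contradiction: assume $M$ admits a line $\gamma$, normalized by arclength, with $d(\gamma(t),B(x_0,R))\ge |t|+2R$ for all $t$, and show that the rigidity this forces is incompatible with the fact that $Ric_{Trace(A)}(\cdot,A\cdot)$ can be negative only inside the bounded ball $B(x_0,R)$. First I would record the elementary consequence of the hypothesis: since the right-hand side is positive, $d(\gamma(t),x_0)\ge |t|+3R$ for all $t$, so $d(x_0,\gamma)\ge 3R$ and any tubular neighborhood $N$ of $\gamma$ of radius $\delta<2R$ stays entirely outside $B(x_0,R)$. The decisive preliminary is to verify the hypotheses of the preceding lemma for such an $N$. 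Fix $q\in N$ with foot point $\gamma(s)$, $d(q,\gamma(s))<\delta$, and let $z$ lie on a minimal geodesic from $q$ to $\gamma(t)$ with $t$ large. If $z\in B(x_0,R)$, then $d(z,\gamma(t))>d(\gamma(t),x_0)-R\ge |t|+2R$ and $d(q,z)>d(q,x_0)-R\ge |s|+2R-\delta$, so $d(q,\gamma(t))=d(q,z)+d(z,\gamma(t))>|s|+|t|+4R-\delta$, contradicting $d(q,\gamma(t))\le d(q,\gamma(s))+|t-s|<\delta+|t|+|s|$ whenever $\delta<2R$. Hence every minimal geodesic from $q$ to $\gamma(\pm t)$, and therefore every asymptotic ray from $q$ to $\gamma^{\pm}$, avoids $B(x_0,R)$, so $Ric_{Trace(A)}(X,AX)\ge 0$ holds all along these rays. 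This is exactly the input the preceding lemma needs, and it produces, through every point of $N$, a line $\alpha$ with $b_\gamma^{+}(\alpha^{+}(t))=t$ and $b_\gamma^{-}(\alpha^{-}(t))=t$.

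With lines through every point of $N$ available, I would run the splitting machinery of Section \ref{sec7} localized to $N$. As in Proposition \ref{busemansuphar} and Corollaries \ref{busemansmooth}, \ref{unitbgr}, the Busemann functions satisfy $b_\gamma^{+}+b_\gamma^{-}=0$ and $\Delta_{A,f^A}b_\gamma^{\pm}=0$ on $N$ with $|\nabla b_\gamma^{+}|=1$, the key point being that the nonnegativity of $Ric_{Trace(A)}(\cdot,A\cdot)$ is only used along the asymptotic rays, which we have just arranged to stay in the good region. Feeding this into the Bochner identity of Theorem \ref{bochner3} together with the sign condition forces $hess^2(b_\gamma^{+})\equiv 0$ on $N$, exactly as in the proof of Theorem \ref{Cheeger}. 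Thus $\nabla b_\gamma^{+}$ is a parallel unit Killing field on $N$, and $N$ splits isometrically as a Riemannian product with an $\mathbb{R}$-factor tangent to $\gamma$, the orbit lines being the $\alpha$'s.

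It remains to turn this local product into a global contradiction, and this is the step I expect to be the main obstacle, since Theorem \ref{Cheeger} is stated under a global sign hypothesis whereas here the sign is guaranteed only off the ball. The intended mechanism is that the flow of the parallel field $\nabla b_\gamma^{+}$ is a one-parameter group of isometries without fixed points whose orbits are the complete lines $\alpha$, all of which remain in the curvature-controlled region; propagating the identity $b_\gamma^{+}+b_\gamma^{-}=0$ by the strong maximum principle of Theorem \ref{Maximum principle} across the connected region where the sign is available (or, equivalently, continuing the parallel field along the orbit lines and sliding the tubular neighborhood in a monotone exhaustion) upgrades the local splitting to a global isometric splitting $M\cong \mathbb{R}\times W$. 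In such a product the metric, hence $Ric_{Trace(A)}(\cdot,A\cdot)$, is invariant under translation in the $\mathbb{R}$-factor, so if this tensor were negative at a single point it would be negative along the entire $\mathbb{R}$-orbit through that point; but such an orbit leaves every bounded ball, contradicting the assumption that $Ric_{Trace(A)}(X,AX)\ge 0$ outside $B(x_0,R)$. This contradiction establishes the lemma. The one genuinely delicate point is the globalization just described, together with the borderline radius $\delta=2R$, which I would avoid by working with any fixed $\delta<2R$, noting that such a $\delta$ already triggers the preceding lemma and hence the entire argument.
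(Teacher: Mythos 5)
The paper itself does not prove this lemma --- it states that the proof is ``similar to the corresponding for Ricci \cite{cai1991ends} or weighted Ricci tensor \cite{wu2016counting}'' and omits it --- so the comparison below is against the intended Cai-type argument. The first half of your proposal is correct and is exactly the right opening move: the distance computation showing that for $\delta<2R$ every minimal geodesic (hence every asymptotic ray) from a point $q$ in the $\delta$-tube $N$ to $\gamma(\pm t)$ avoids $B(x_0,R)$ is sound (your inequality $|s|+|t|+4R-\delta<\delta+|t|+|s|$ forcing $\delta>2R$ checks out), and it legitimately triggers the preceding lemma to produce calibrated lines $\alpha$ through every point of $N$.

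The gap is in the second half. The step you yourself flag as ``the main obstacle'' --- upgrading the local splitting of $N$ to a global splitting $M\cong\mathbb{R}\times W$ by ``propagating by the strong maximum principle'' or ``sliding the tubular neighborhood'' --- is not an argument: as you slide toward $B(x_0,R)$ your step-3 computation (which uses $d(\gamma(t),x_0)\ge|t|+3R$ for the \emph{original} line) no longer applies, and for points near $\partial B(x_0,R)$ the asymptotic rays may a priori enter the bad ball, so neither the Bochner step nor the maximum principle for $b^++b^-$ is available there. The missing ingredient, which is the heart of Cai's proof, is the estimate $b^{\pm}_\gamma(x)\le d(x,B(x_0,R))-2R$ (from the triangle inequality and the hypothesis on $\gamma$), combined with the calibration $b^{\pm}_\gamma(\alpha^{\pm}(t))=t$: together these show that every new line $\alpha$ through a point of $N$ satisfies, after reparametrization, the \emph{same} condition $d(\alpha(t),B(x_0,R))\ge|t|+2R$. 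This self-propagation is what makes the set of points lying on such calibrated lines open (re-apply the preceding lemma to each new line) and closed, hence all of $M$; a point of $\overline{B(x_0,R)}$ then lies on a line that stays at distance $\ge 2R$ from $B(x_0,R)$, the desired contradiction. Your proposed final contradiction is also flawed on its own terms: even granting a global isometric splitting, $Ric_{Trace(A)}(\cdot,A\cdot)$ is \emph{not} automatically invariant under translation in the $\mathbb{R}$-factor, because the tensor $A$ (unlike the metric) has no reason to be preserved by the splitting isometries; the correct contradiction comes from the Busemann estimate above, which gives $d(x,B(x_0,R))\ge|b^+_\gamma(x)|+2R\ge 2R$ for all $x$, impossible at $x=x_0$.
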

Similar to \cite{cai1991ends} the following Proposition can be obtained.
\begin{prop}\label{en1} With the same assumption as in theorem \ref{endofmanifold}, if $\left[ {{\gamma _1}} \right]$ and $\left[ {{\gamma _2}} \right]$ are two different ends of $ M $, then $d({\gamma _1}(4R),{\gamma _2}(4R)) > 2R$.
\end{prop}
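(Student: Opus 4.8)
The plan is to argue by contradiction using only the metric--topological structure of the ends relative to $B(x_0,R)$; the curvature bounds carried over from Theorem~\ref{endofmanifold} are what justify the two preceding Lemmas, but this particular separation estimate is essentially elementary. First I would record that, since $\gamma_1,\gamma_2$ are unit-speed rays issuing from $x_0$, one has $d(x_0,\gamma_i(t))=t$, so both $\gamma_1(4R)$ and $\gamma_2(4R)$ lie in $M\setminus\overline{B(x_0,R)}$. The key identification to set up is that ``$[\gamma_1]\neq[\gamma_2]$'' means the tails $\gamma_i|_{(R,\infty)}$ --- each a connected subset of $M\setminus\overline{B(x_0,R)}$ --- lie in different connected components of $M\setminus\overline{B(x_0,R)}$; in particular $\gamma_1(4R)$ and $\gamma_2(4R)$ sit in different components.

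Next I would suppose for contradiction that $d(\gamma_1(4R),\gamma_2(4R))\le 2R$ and take a minimizing geodesic $\sigma:[0,L]\to M$ from $\gamma_1(4R)$ to $\gamma_2(4R)$, where $L=d(\gamma_1(4R),\gamma_2(4R))\le 2R$. For every $w=\sigma(s)$ one has $d(\gamma_1(4R),w)=s\le L$, so the triangle inequality gives
\[
d(x_0,w)\ge d(x_0,\gamma_1(4R))-d(\gamma_1(4R),w)\ge 4R-L\ge 4R-2R=2R>R,
\]
whence $w\notin\overline{B(x_0,R)}$. Thus the image of $\sigma$ is a path lying entirely in $M\setminus\overline{B(x_0,R)}$ that joins $\gamma_1(4R)$ to $\gamma_2(4R)$, contradicting that these points lie in different components of $M\setminus\overline{B(x_0,R)}$. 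This forces $d(\gamma_1(4R),\gamma_2(4R))>2R$, as claimed. (The same estimate in fact yields the stronger bound $\ge 3R$, since $L<3R$ already gives $d(x_0,w)\ge 4R-L>R$; but $>2R$ is all that the subsequent counting argument needs, and it is the form that dovetails with the $2R$ appearing in the no-line Lemma.)

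The main thing to get right is the first step: translating the abstract notion ``different ends'' into ``different components of $M\setminus\overline{B(x_0,R)}$.'' Once the rays are parametrized from $x_0$ so that their tails stay outside the ball, this is immediate from the definition of co-finality, and the remainder is the one-line distance estimate above. If one instead prefers to follow Cai's route literally, the alternative is to assume the endpoints are within $2R$, concatenate $\gamma_1$, the short geodesic $\sigma$, and $\gamma_2$, pass to a limit of minimizing geodesics between $\gamma_1(t)$ and $\gamma_2(t)$, and thereby produce a line staying at distance $\ge|t|+2R$ from $B(x_0,R)$ --- precisely what the preceding no-line Lemma forbids under the hypothesis $Ric_{Trace(A)}(X,AX)\ge 0$ outside the ball. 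I would present the direct component argument, since it is shortest and fully self-contained.
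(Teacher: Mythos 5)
Your primary (``direct component'') argument has a genuine gap at exactly the step you yourself flag as ``the main thing to get right.'' From the paper's definition of co-finality, $[\gamma_1]\neq[\gamma_2]$ only guarantees that for \emph{some} radius $R_0>0$ the tails of $\gamma_1$ and $\gamma_2$ lie in different components of $M\setminus B(x_0,R_0)$; nothing forces $R_0\le R$. Since $M\setminus B(x_0,R_0)\subseteq M\setminus\overline{B(x_0,R)}$ when $R_0\ge R$, separation at radius $R_0$ is the \emph{weaker} statement and does not imply separation at radius $R$: a path joining the two tails could perfectly well pass through the annulus $B(x_0,R_0)\setminus \overline{B(x_0,R)}$. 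Topologically, two distinct ends could branch off a single neck far outside $B(x_0,R)$, in which case $\gamma_1(4R)$ and $\gamma_2(4R)$ lie in the \emph{same} component of $M\setminus\overline{B(x_0,R)}$ and your contradiction never materializes. The entire content of the proposition --- and the reason it needs the curvature hypotheses carried over from the end-counting theorem --- is precisely that the ends are forced to separate already at scale $R$; that cannot be extracted from the definition of ends alone, so your ``immediate from the definition'' claim is false.

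The correct route is the one you relegate to a parenthetical at the end. Assume $d(\gamma_1(4R),\gamma_2(4R))\le 2R$; take minimizing geodesics $\sigma_t$ from $\gamma_1(t)$ to $\gamma_2(t)$, which must meet the fixed compact set $\overline{B(x_0,R_0)}$ for all large $t$ (this is where ``different ends'' is actually used, with its own radius $R_0$); extract a limit line; and use the triangle-inequality bound $L(\sigma_t)\le 2t-6R$ together with $d(x_0,\sigma_t(s))\ge t-\min(s,L(\sigma_t)-s)$ to show the limit line stays at distance at least $|s|+2R$ from $B(x_0,R)$, contradicting the preceding no-line lemma. This is Cai's argument, and it is what the paper intends (the paper gives no proof of this proposition, only the citation to Cai). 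Your triangle-inequality computation is a useful ingredient of that argument, but as presented your main proof does not establish the proposition; the limiting construction needs to be promoted from a remark to the actual proof and carried out.
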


\begin{proof}[\textbf{Proof of Theorem \ref{endofmanifold}}] Let $\left[ {{\gamma _1}} \right],...,\left[ {{\gamma _k}} \right]$ be $ k $ disjoint ends of $ M $ where ${\gamma _1},...,{\gamma _k}$ are rays from the fixed point $ p $. Let $\left\{ {{p_j}} \right\}_{j = 1}^L$ be the maximal set of points on $\partial B\left( {p,4R} \right)$ such that balls $B\left( {{p_j},\frac{R}{2}} \right)$ are disjoint. As mentioned in \cite{wu2016counting,cai1991ends} Proposition \ref{en1} implies that $k \le L$. \\
By considering
\[B({p_j},\frac{R}{2}) \subseteq B(p,\frac{{9R}}{2}) \subseteq B({p_j},\frac{{17R}}{2}),\]
we get
\[N(A,M,R) \le \frac{{vol_{{h^A}}^A\left( {B({p_j},\frac{{17R}}{2})} \right)}}{{vol_{{h^A}}^A\left( {B({p_j},\frac{R}{2})} \right)}},\]
and with the extended volume comparison Theorem \ref{extended volume comparison}, the following estimate is obtained,
\[N(A,M,R) \le \frac{{vol_{{h^A}}^A\left( {B({p_j},\frac{{17R}}{2})} \right)}}{{vol_{{h^A}}^A\left( {B({p_j},\frac{R}{2})} \right)}} \le {\left( {\frac{{1 - \bar C\varepsilon }}{{1 - 2\bar C\varepsilon }}} \right)^p}\frac{{vol_{ - H}^{m'}B(R)}}{{vol_{ - H}^{m'}B(r)}},\]
where
\[\begin{array}{*{20}{c}}
   {m' = \left[ {\frac{{{\delta _n}(n - 1) + 4({K_1} + {{K'}_1})}}{{{\delta _1}}}} \right] + 2,}  \\
   {{K_1} = \mathop {\sup }\limits_{x \in B({p_j},17R/2)} \left| {{f^A}\left( x \right)} \right|,}  \\
   {{{K'}_1}: = \mathop {\sup }\limits_{x \in B({p_j},17R/2)} \left| {Trace(A)\left( x \right)} \right|.}  \\
\end{array}\]
But for all $ j $, we have $B({p_j},17R/2) \subseteq B({x_0},25R/2)$ and $\left( {\frac{{1 - \bar C\varepsilon }}{{1 - 2\bar C\varepsilon }}} \right)$ is non decreasing with respect to $ R $, so the resut follows by the following inequality,
\[\frac{{\int_0^{\alpha r} {{{\sinh }^{m - 1}}(\beta t)dt} }}{{\int_0^r {{{\sinh }^{m - 1}}(\beta t )dt} }} \le \frac{{2m}}{{m - 1}}{\left( {\beta r} \right)^{ - m}}\exp \left( {\alpha \left( {m - 1} \right)\beta r} \right).\]
\end{proof}

\section{Applications in the study of geometry of hypersurfaces}\label{sec10}
In this section we give an application of the extended Ricci tensor for the study of Riemannian hyper surfaces, to do this we apply the tensor field $\frac{1}{{\delta _n^2}}{A^2}$ and show for minimal hypersurfaces the extended Ricci tensor $\frac{1}{{\delta _n^2}}Ric\left( {X,{A^2}X} \right)$ is greater than the Ricci tensor of the hypersurface. At first We compute the corresponding parameters for the tensor field $ A^2 $, when $ A $ is a codazzi tensor and get a similar Ricatti inequality as we get for Codazzi tensors.
\begin{lem}\label{application1} Let $ A $ be a Codazzi tensor, then
\[\sum\nolimits_i {{e_i}.\left\langle {\nabla u,{T^{{A^2}}}({e_i},\nabla u)} \right\rangle } {\rm{ }} = \left\langle {div\left( {\left( {{\nabla _{\nabla u}}A} \right)A - A\left( {{\nabla _{\nabla u}}A} \right)} \right),\nabla u} \right\rangle. \]
\end{lem}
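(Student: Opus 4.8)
The plan is to collapse the entire left-hand side into the divergence of a single vector field and then apply a Leibniz rule, using the Codazzi property of $A$ and the self-adjointness of $A$ and of $\nabla_X A$ at every stage. Throughout I write $V=\nabla u$, set $D_X:=\nabla_X A$ (each $D_X$ is a self-adjoint $(1,1)$-tensor because $A$ is), and fix an orthonormal frame $\{e_i\}$ with $\nabla_{e_i}e_j=0$ at the computing point.

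First I would make $T^{A^2}$ explicit. The Leibniz rule gives $\nabla_X A^2 = D_X\circ A + A\circ D_X$, so from Definition \ref{torsion},
\[T^{A^2}(e_i,V) = D_{e_i}(AV) + A\,D_{e_i}V - D_V(Ae_i) - A\,D_V e_i.\]
Here the Codazzi hypothesis $T^A=0$, i.e. $D_X Y = D_Y X$, makes the two terms carrying an outer $A$ cancel, since $A\,D_{e_i}V - A\,D_V e_i = A(D_{e_i}V - D_V e_i)=0$; this leaves $T^{A^2}(e_i,V) = D_{e_i}(AV) - D_V(Ae_i)$. Next I would collapse $\langle V,T^{A^2}(e_i,V)\rangle$ into an inner product with $e_i$: applying self-adjointness of $D_{e_i}$ and then Codazzi ($D_{e_i}V=D_V e_i$) to the first term, and self-adjointness of $D_V$ and $A$ to the second, one obtains
\[\langle V,T^{A^2}(e_i,V)\rangle = \langle D_V(AV)-A\,D_V V,\ e_i\rangle = \langle SV, e_i\rangle,\]
where $S:=(\nabla_V A)A - A(\nabla_V A)$ is precisely the commutator tensor on the right-hand side. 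Since $\sum_i e_i.\langle SV,e_i\rangle = \sum_i\langle \nabla_{e_i}(SV),e_i\rangle$ at the chosen point, the whole left-hand side is just $div(SV)$.

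The final step is the Leibniz rule $div(SV) = \sum_i\langle(\nabla_{e_i}S)V,e_i\rangle + \sum_i\langle S\,\nabla_{e_i}V,e_i\rangle$. The second sum contracts $S$ against $\nabla_{e_i}V = Hess\,u(e_i,\cdot)$; because $S=(\nabla_V A)A-A(\nabla_V A)$ is anti-self-adjoint while $Hess\,u$ is symmetric, this sum equals $-Trace(S\circ Hess\,u)=0$. The surviving first sum is, in the divergence convention for $(1,1)$-tensors used in Definition \ref{ricci}, the quantity $\langle div(S),\nabla u\rangle$, giving the asserted identity.

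I expect the main obstacle to be bookkeeping rather than anything conceptual: one must know at each step which operators are self-adjoint and which are anti-self-adjoint, and insert the Codazzi identity at exactly the right places. The one genuinely structural input is that the commutator $S$ is \emph{anti}-self-adjoint; this is what both identifies the correct combination $(\nabla_{\nabla u}A)A - A(\nabla_{\nabla u}A)$ and forces the $S$--$Hess\,u$ contraction to vanish by the symmetric/antisymmetric trace cancellation. Consequently the passage from $\sum_i\langle(\nabla_{e_i}S)V,e_i\rangle$ to $\langle div(S),\nabla u\rangle$ introduces an adjoint, and it is at this juncture that the sign, and hence the precise direction of the commutator in the statement, must be pinned down with care.
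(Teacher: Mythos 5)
Your proof is correct and follows essentially the same route as the paper: expand $\nabla_X A^2$ by the Leibniz rule, cancel two of the four terms of $T^{A^2}(e_i,\nabla u)$ with the Codazzi identity, rewrite each summand as $\langle S\nabla u, e_i\rangle$ with $S=(\nabla_{\nabla u}A)A-A(\nabla_{\nabla u}A)$, recognize the sum as $div(S\nabla u)$, and split off $\langle div(S),\nabla u\rangle$. The one (welcome) difference is in killing the remaining term $\sum_i\langle S\,\nabla_{e_i}\nabla u,e_i\rangle$: you observe that $S$ is anti-self-adjoint so its trace against the symmetric Hessian vanishes by pure linear algebra, whereas the paper proves $\Delta_{S}u=0$ through a longer chain of Codazzi and self-adjointness swaps; your shortcut is cleaner, and you are also right to flag that this same anti-self-adjointness makes the final identification $\sum_i\langle(\nabla_{e_i}S)\nabla u,e_i\rangle=\langle div(S),\nabla u\rangle$ sensitive to the contraction convention for $div$ of a $(1,1)$-tensor, a sign issue the paper passes over silently.
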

\begin{proof} At first note that,
\[{T^{{A^2}}}\left( {X,Y} \right) = \left( {{\nabla _X}{A^2}} \right)Y - \left( {{\nabla _Y}{A^2}} \right)X = \left( {{\nabla _X}A} \right)AY - \left( {{\nabla _Y}A} \right)AX.\]
So,
\begin{eqnarray*}
   {\sum\nolimits_i {{e_i}.\left\langle {\nabla u,{T^{{A^2}}}({e_i},\nabla u)} \right\rangle } } &=&\sum\nolimits_i {{e_i}.\left\langle {\nabla u,\left( {{\nabla _{{e_i}}}A} \right)A\nabla u - \left( {{\nabla _{\nabla u}}A} \right)A{e_i}} \right\rangle }
\\&=&\sum\nolimits_i {{e_i}.\left( {\left\langle {\left( {{\nabla _{{e_i}}}A} \right)\nabla u,A\nabla u} \right\rangle  - \left\langle {\nabla u,\left( {{\nabla _{\nabla u}}A} \right)A{e_i}} \right\rangle } \right)}
\\&=&\sum\nolimits_i {{e_i}.\left( {\left\langle {{e_i},\left( {{\nabla _{\nabla u}}A} \right)A\nabla u} \right\rangle  - \left\langle {A\left( {{\nabla _{\nabla u}}A} \right)\nabla u,{e_i}} \right\rangle } \right)}
\\&=&\sum\nolimits_i {{e_i}.\left( {\left\langle {\left( {{\nabla _{\nabla u}}A} \right)A\nabla u - A\left( {{\nabla _{\nabla u}}A} \right)\nabla u,{e_i}} \right\rangle } \right)}
\\&=&div\left( {\left( {\left( {{\nabla _{\nabla u}}A} \right)A - A\left( {{\nabla _{\nabla u}}A} \right)} \right)\nabla u} \right).
\end{eqnarray*}
Note,
\begin{eqnarray*}
  {div\left( {\left( {\left( {{\nabla _{\nabla u}}A} \right)A - A\left( {{\nabla _{\nabla u}}A} \right)} \right)\nabla u} \right)}&=&\left\langle {div\left( {\left( {{\nabla _{\nabla u}}A} \right)A - A\left( {{\nabla _{\nabla u}}A} \right)} \right),\nabla u} \right\rangle
\\&&+ {\Delta _{\left( {{\nabla _{\nabla u}}A} \right)A - A\left( {{\nabla _{\nabla u}}A} \right)}}u.
\end{eqnarray*}
and,
\begin{eqnarray*}
   {\sum\nolimits_i {\left\langle {hessu({e_i}),\left( {{\nabla _{\nabla u}}A} \right)A{e_i}} \right\rangle } }&=&\sum\nolimits_i {\left\langle {\left( {{\nabla _{\nabla u}}A} \right) \circ hessu({e_i}),A{e_i}} \right\rangle  = \sum\nolimits_i {\left\langle {\left( {{\nabla _{hessu({e_i})}}A} \right)\nabla u,A{e_i}} \right\rangle } }
\\&=&\sum\nolimits_i {\left\langle {\nabla u,\left( {{\nabla _{hessu({e_i})}}A} \right)A{e_i}} \right\rangle }  = \sum\nolimits_i {\left\langle {\nabla u,\left( {{\nabla _{A{e_i}}}A} \right)hessu({e_i})} \right\rangle }
\\&=&\sum\nolimits_i {\left\langle {\left( {{\nabla _{A{e_i}}}A} \right)\nabla u,hessu({e_i})} \right\rangle }  = \sum\nolimits_i {\left\langle {\left( {{\nabla _{\nabla u}}A} \right)A{e_i},hessu({e_i})} \right\rangle }
\\&=&\sum\nolimits_i {\left\langle {hessu \circ \left( {{\nabla _{\nabla u}}A} \right)A{e_i},{e_i}} \right\rangle }  = Trace\left( {hessu \circ \left( {{\nabla _{\nabla u}}A} \right)A} \right)
\\&=&Trace\left( {\left( {{\nabla _{\nabla u}}A} \right)A \circ hessu} \right) = \sum\nolimits_i {\left\langle {\left( {{\nabla _{\nabla u}}A} \right)A \circ hessu({e_i}),{e_i}} \right\rangle }
\\&=&\sum\nolimits_i {\left\langle {hessu({e_i}),A\left( {{\nabla _{\nabla u}}A} \right){e_i}} \right\rangle }.
\end{eqnarray*}
So,
\[{\Delta _{\left( {{\nabla _{\nabla u}}A} \right)A - A\left( {{\nabla _{\nabla u}}A} \right)}}u = 0,\]
and,
\[div\left( {\left( {\left( {{\nabla _{\nabla u}}A} \right)A - A\left( {{\nabla _{\nabla u}}A} \right)} \right)\nabla u} \right) = \left\langle {div\left( {\left( {{\nabla _{\nabla u}}A} \right)A - A\left( {{\nabla _{\nabla u}}A} \right)} \right),\nabla u}\right\rangle. \]
\end{proof}
Now we compute $\sum\nolimits_i {\left\langle {{T^{{\nabla _{\nabla u}}{A^2}}}({e_i},\nabla u),{e_i}} \right\rangle } $.
\begin{prop}\label{application2}Let ${\Sigma ^n} \subset {M^{n + 1}}(c)$ be a Riemannian hypersurface and the ambient manifold $ M $ has constant sectional curvature with the shape operator $ A $, such that ${\nabla ^*}{T^{{A^2}}} = 0$ , then the extended Bochner formula \ref{extendedbochner2} is written as (For the operator $ A^2 $)
\begin{eqnarray*}
   {\frac{1}{2}{L_{{A^2}}}({{\left| {\nabla u} \right|}^2})  }  &=& {\frac{1}{2}\left\langle {\nabla {{\left| {\nabla u} \right|}^2},div({A^2})} \right\rangle  + Trace\left( {{A^2} \circ hes{s^2}\left( u \right)} \right) + \left\langle {\nabla u,\nabla ({\Delta _{{A^2}}}u)} \right\rangle }   \\
 && { - \nabla u.\nabla u.Trace({A^2}) + \left\langle {{\nabla _{\nabla u}}div({A^2}),\nabla u} \right\rangle  + Ric(\nabla u,{A^2}\nabla u)}   \\
 && { - 2\sum\nolimits_i {\left\langle {{T^{{\nabla _{\nabla u}}A}}\left( {{e_i},\nabla u} \right),A{e_i}} \right\rangle }  - 2\left\langle {divA,\left( {{\nabla _{\nabla u}}A} \right)\nabla u} \right\rangle  + 2Trace\left( {{{\left( {{\nabla _{\nabla u}}A} \right)}^2}} \right)}
\end{eqnarray*}
\end{prop}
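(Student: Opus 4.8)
The plan is to specialize the extended Bochner formula of Proposition \ref{extendedbochner2} to the self-adjoint tensor $B = A^2$ and then reduce the two torsion contributions that appear there, exploiting that the shape operator $A$ of a hypersurface in a space form is a Codazzi tensor. First I would verify the hypotheses of Proposition \ref{extendedbochner2}: $A^2$ is self-adjoint because $A$ is, while the standing assumption $\nabla^* T^{A^2} = 0$ is exactly what is needed to apply that proposition, since it enters through Lemma \ref{laplaceA} to produce the clean term $Ric(\nabla u, A^2\nabla u)$ in place of $Ric_{A^2}$. With $B = A^2$ the six non-torsion terms of Proposition \ref{extendedbochner2} reproduce verbatim the first six terms of the claimed identity, so the whole statement reduces to simplifying $\sum\nolimits_i \langle T^{(\nabla_{\nabla u}A^2)}(e_i, \nabla u), e_i\rangle$ and $\sum\nolimits_i e_i.\langle \nabla u, T^{A^2}(e_i, \nabla u)\rangle$.

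Because the ambient manifold $M^{n+1}(c)$ has constant sectional curvature, the Codazzi equation forces $T^A = 0$: in the notation of the hypersurface example following Definition \ref{torsion}, $T^A(Y,X) = (\bar R(Y,X)N)^T$, which vanishes when $\bar R$ is a constant-curvature tensor and $N$ is normal to $\Sigma$. This is what allows me to invoke Lemma \ref{application1}, which already handles the second torsion term: setting $S := \nabla_{\nabla u}A$, it yields $\sum\nolimits_i e_i.\langle \nabla u, T^{A^2}(e_i, \nabla u)\rangle = \langle div(SA - AS), \nabla u\rangle$.

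Next I would expand the first torsion term directly. Writing $\nabla_{\nabla u}(A^2) = SA + AS$ and using the self-adjointness of $SA + AS$ together with the identity $\sum\nolimits_i\langle(\nabla_{e_i}C)\nabla u, e_i\rangle = \langle \nabla u, div(C)\rangle$ valid for any self-adjoint $C$, I expect $\sum\nolimits_i\langle T^{(\nabla_{\nabla u}A^2)}(e_i, \nabla u), e_i\rangle = \langle \nabla u, div(SA) + div(AS)\rangle - 2\,\nabla u.Trace(SA)$. Adding the two torsion terms, the $div(AS)$ contributions should cancel between them, leaving $-2\langle \nabla u, div(SA)\rangle + 2\,\nabla u.Trace(SA)$ as the total torsion content. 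I would then split $div(SA) = \sum\nolimits_i(\nabla_{e_i}S)Ae_i + S\,divA$, use the commutation $(\nabla_{e_i}S)\nabla u = (\nabla_{\nabla u}S)e_i + T^S(e_i, \nabla u)$ and the Codazzi identity $(\nabla_{e_i}A)\nabla u = Se_i$, and rewrite $\nabla u.Trace(SA) = Trace((\nabla_{\nabla u}S)A) + Trace(S^2)$ by cyclicity of the trace. The second-order pieces $Trace((\nabla_{\nabla u}S)A)$ then cancel, and what remains is precisely $-2\sum\nolimits_i\langle T^{S}(e_i, \nabla u), Ae_i\rangle - 2\langle divA, S\nabla u\rangle + 2\,Trace(S^2)$, i.e.\ the claimed torsion part with $S = \nabla_{\nabla u}A$.

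The main obstacle I anticipate is the bookkeeping in this last step: one must cleanly separate the genuinely third-order terms (those assembling into $T^{\nabla_{\nabla u}A}$) from the second-order trace terms and check that the latter cancel. The cancellation rests on the commutation identity relating $(\nabla_{e_i}S)\nabla u$ and $(\nabla_{\nabla u}S)e_i$ through $T^S$, which is the analogue for $S$ of Lemma \ref{laplaceA1}(b), together with the Codazzi property; the single factor $S\,divA$ is what produces the term $\langle divA, (\nabla_{\nabla u}A)\nabla u\rangle$, and the overall coefficient $2$ is forced by the symmetry between the $SA$ and $AS$ summands of $\nabla_{\nabla u}(A^2)$.
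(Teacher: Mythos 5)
Your proposal is correct and follows essentially the same route as the paper: specialize Proposition \ref{extendedbochner2} to $B=A^2$, invoke Lemma \ref{application1} (with $T^A=0$ from the constant-curvature ambient) for the term $\sum\nolimits_i e_i.\langle\nabla u,T^{A^2}(e_i,\nabla u)\rangle$, and reduce the remaining torsion sum so that the second-derivative traces cancel, leaving $-2\sum\nolimits_i\langle T^{\nabla_{\nabla u}A}(e_i,\nabla u),Ae_i\rangle-2\langle \operatorname{div}A,(\nabla_{\nabla u}A)\nabla u\rangle+2\,\mathrm{Trace}\bigl((\nabla_{\nabla u}A)^2\bigr)$. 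The only difference is organizational — you package $\sum\nolimits_i\langle T^{(\nabla_{\nabla u}A^2)}(e_i,\nabla u),e_i\rangle$ via the identity $\langle\nabla u,\operatorname{div}C\rangle-\nabla u.\mathrm{Trace}(C)$ for the self-adjoint $C=SA+AS$ and then introduce $T^S$, whereas the paper expands $T^{A(\nabla_{\nabla u}A)}$ and $T^{(\nabla_{\nabla u}A)A}$ separately — and I verified your intermediate identities and cancellations agree with the paper's term by term.
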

\begin{proof}We know
\[\sum\nolimits_i {\left\langle {{T^{{\nabla _{\nabla u}}{A^2}}}({e_i},\nabla u),{e_i}} \right\rangle }  = \sum\nolimits_i {\left\langle {{T^{{\nabla _{\nabla u}}{A^2}}}({e_i},\nabla u),{e_i}} \right\rangle }  = \sum\nolimits_i {\left\langle {{T^{A\left( {{\nabla _{\nabla u}}A} \right) + A\left( {{\nabla _{\nabla u}}A} \right)}}({e_i},\nabla u),{e_i}} \right\rangle } ,\]
By computation of $\sum\nolimits_i {\left\langle {{T^{A\left( {{\nabla _{\nabla u}}A} \right)}}({e_i},\nabla u),{e_i}} \right\rangle }$, we have
\begin{eqnarray*}
   {\sum\nolimits_i {\left\langle {{T^{A\left( {{\nabla _{\nabla u}}A} \right)}}({e_i},\nabla u),{e_i}} \right\rangle } }  & = &{ \sum\nolimits_i {\left\langle {\left( {{\nabla _{{e_i}}}\left( {A\left( {{\nabla _{\nabla u}}A} \right)} \right)} \right)\nabla u - \left( {{\nabla _{\nabla u}}\left( {A\left( {{\nabla _{\nabla u}}A} \right)} \right)} \right){e_i},{e_i}} \right\rangle } }   \\
  & = &{ \sum\nolimits_i {\left\langle {\left( {{\nabla _{{e_i}}}A} \right)\left( {{\nabla _{\nabla u}}A} \right)\nabla u,{e_i}} \right\rangle }  + \sum\nolimits_i {\left\langle {A\left( {{\nabla _{{e_i}}}{\nabla _{\nabla u}}A} \right)\nabla u,{e_i}} \right\rangle } }   \\
 && {\,\,\,\,\, - \sum\nolimits_i {\left\langle {{{\left( {{\nabla _{\nabla u}}A} \right)}^2}{e_i},{e_i}} \right\rangle }  - \sum\nolimits_i {\left\langle {A\left( {\nabla _{\nabla u}^2A} \right){e_i},{e_i}} \right\rangle } } \hfill  \\
    & =& { \left\langle {\left( {{\nabla _{\nabla u}}A} \right)\nabla u,divA} \right\rangle  + \sum\nolimits_i {\left\langle {A\left( {{\nabla _{{e_i}}}{\nabla _{\nabla u}}A} \right)\nabla u,{e_i}} \right\rangle } } \hfill  \\
   & & {\,\,\,\, - Trace\left( {{{\left( {{\nabla _{\nabla u}}A} \right)}^2}} \right) - Trace\left( {A\left( {\nabla _{\nabla u}^2A} \right)} \right).}
\end{eqnarray*}
Also
\begin{eqnarray*}
   {\sum\nolimits_i {\left\langle {{T^{\left( {{\nabla _{\nabla u}}A} \right)A}}({e_i},\nabla u),{e_i}} \right\rangle } }  & =& { \sum\nolimits_i {\left\langle {\left( {{\nabla _{{e_i}}}\left( {\left( {{\nabla _{\nabla u}}A} \right)A} \right)} \right)\nabla u - \left( {{\nabla _{\nabla u}}\left( {\left( {{\nabla _{\nabla u}}A} \right)A} \right)} \right){e_i},{e_i}} \right\rangle } }   \\
   &  =& {\sum\nolimits_i {\left\langle {\left( {{\nabla _{{e_i}}}{\nabla _{\nabla u}}A} \right)A\nabla u,{e_i}} \right\rangle }  + \sum\nolimits_i {\left\langle {\left( {{\nabla _{\nabla u}}A} \right)\left( {{\nabla _{{e_i}}}A} \right)\nabla u,{e_i}} \right\rangle } }   \\
  && {\,\,\,\, - \sum\nolimits_i {\left\langle {\left( {\nabla _{\nabla u}^2A} \right)A{e_i},{e_i}} \right\rangle  - \sum\nolimits_i {\left\langle {{{\left( {{\nabla _{\nabla u}}A} \right)}^2}{e_i},{e_i}} \right\rangle } } }   \\
   &  =&{ \sum\nolimits_i {\left\langle {\left( {{\nabla _{{e_i}}}{\nabla _{\nabla u}}A} \right)A\nabla u,{e_i}} \right\rangle }  + Trace\left( {{{\left( {{\nabla _{\nabla u}}A} \right)}^2}} \right)\,} \hfill  \\
  && {\,\,\,\, - Trace\left( {{{\left( {{\nabla _{\nabla u}}A} \right)}^2}} \right) - Trace\left( {\left( {\nabla _{\nabla u}^2A} \right)A} \right)} \hfill  \\
  &=& {  \sum\nolimits_i {\left\langle {\left( {{\nabla _{{e_i}}}{\nabla _{\nabla u}}A} \right)A\nabla u,{e_i}} \right\rangle }  - Trace\left( {\left( {\nabla _{\nabla u}^2A} \right)A} \right).}
\end{eqnarray*}
Consequently,
\begin{eqnarray*}
   {\sum\nolimits_i {\left\langle {{T^{{\nabla _{\nabla u}}{A^2}}}({e_i},\nabla u),{e_i}} \right\rangle } }  &  =&{ \sum\nolimits_i {\left\langle {{T^{A\left( {{\nabla _{\nabla u}}A} \right)}}({e_i},\nabla u),{e_i}} \right\rangle }  + \sum\nolimits_i {\left\langle {{T^{\left( {{\nabla _{\nabla u}}A} \right)A}}({e_i},\nabla u),{e_i}} \right\rangle } }   \\
 &= & { \left\langle {\left( {{\nabla _{\nabla u}}A} \right)\nabla u,divA} \right\rangle  + \sum\nolimits_i {\left\langle {A\left( {{\nabla _{{e_i}}}{\nabla _{\nabla u}}A} \right)\nabla u,{e_i}} \right\rangle } }   \\
   & & {\,\,\,\, - Trace\left( {{{\left( {{\nabla _{\nabla u}}A} \right)}^2}} \right) - Trace\left( {A\left( {\nabla _{\nabla u}^2A} \right)} \right)} \hfill  \\
 && {\,\,\,\, + \sum\nolimits_i {\left\langle {\left( {{\nabla _{{e_i}}}{\nabla _{\nabla u}}A} \right)A\nabla u,{e_i}} \right\rangle }  - Trace\left( {\left( {\nabla _{\nabla u}^2A} \right)A} \right).}
\end{eqnarray*}
By computation of $\left\langle {div\left( {\left( {{\nabla _{\nabla u}}A} \right)A - A\left( {{\nabla _{\nabla u}}A} \right)} \right),\nabla u} \right\rangle$ we have,
\begin{eqnarray*}
   &&{\left\langle {div\left( {\left( {{\nabla _{\nabla u}}A} \right)A - A\left( {{\nabla _{\nabla u}}A} \right)} \right),\nabla u} \right\rangle } \\ & = &{ \sum\nolimits_i {\left\langle {\left( {{\nabla _{{e_i}}}\left( {\left( {{\nabla _{\nabla u}}A} \right)A - A\left( {{\nabla _{\nabla u}}A} \right)} \right)} \right){e_i},\nabla u} \right\rangle } }   \\
 &  =&{ \sum\nolimits_i {\left\langle {\left( {{\nabla _{{e_i}}}{\nabla _{\nabla u}}A} \right)A{e_i},\nabla u} \right\rangle }  + \sum\nolimits_i {\left\langle {\left( {{\nabla _{\nabla u}}A} \right)\left( {{\nabla _{{e_i}}}A} \right){e_i},\nabla u} \right\rangle } }   \\
& & {\,\,\,\,\, - \sum\nolimits_i {\left\langle {\left( {{\nabla _{{e_i}}}A} \right)\left( {{\nabla _{\nabla u}}A} \right){e_i},\nabla u} \right\rangle }  - \sum\nolimits_i {\left\langle {\left( {A\left( {{\nabla _{{e_i}}}{\nabla _{\nabla u}}A} \right)} \right){e_i},\nabla u} \right\rangle } } \hfill  \\
   & =& {  \sum\nolimits_i {\left\langle {\left( {{\nabla _{{e_i}}}{\nabla _{\nabla u}}A} \right)A{e_i},\nabla u} \right\rangle }  + \left\langle {divA,\left( {{\nabla _{\nabla u}}A} \right)\nabla u} \right\rangle } \hfill  \\
  && {\,\,\,\,\, - Trace\left( {{{\left( {{\nabla _{\nabla u}}A} \right)}^2}} \right) - \sum\nolimits_i {\left\langle {\left( {A\left( {{\nabla _{{e_i}}}{\nabla _{\nabla u}}A} \right)} \right){e_i},\nabla u} \right\rangle } .}
\end{eqnarray*}
And finally we have
\begin{eqnarray*}
   &&{\sum\nolimits_i {\left\langle {{T^{{\nabla _{\nabla u}}{A^2}}}({e_i},\nabla u),{e_i}} \right\rangle }  + \sum\nolimits_i {{e_i}.\left\langle {{T^{{A^2}}}({e_i},\nabla u),{e_i}} \right\rangle } }   \\
& =&  { \sum\nolimits_i {\left\langle {{T^{{\nabla _{\nabla u}}{A^2}}}({e_i},\nabla u),{e_i}} \right\rangle }  + \left\langle {div\left( {\left( {{\nabla _{\nabla u}}A} \right)A - A\left( {{\nabla _{\nabla u}}A} \right)} \right),\nabla u} \right\rangle }   \\
 &=&  {  2\sum\nolimits_i {\left\langle {\left( {{\nabla _{{e_i}}}{\nabla _{\nabla u}}A} \right)A{e_i},\nabla u} \right\rangle }  + 2\left\langle {divA,\left( {{\nabla _{\nabla u}}A} \right)\nabla u} \right\rangle }   \\
   &&{ - 2Trace\left( {{{\left( {{\nabla _{\nabla u}}A} \right)}^2}} \right) - 2Trace\left( {A\left( {\nabla _{\nabla u}^2A} \right)} \right)} \hfill  \\
  &=& { 2\sum\nolimits_i {\left\langle {A{e_i},{T^{{\nabla _{\nabla u}}A}}\left( {{e_i},\nabla u} \right)} \right\rangle }  + 2\left\langle {divA,\left( {{\nabla _{\nabla u}}A} \right)\nabla u} \right\rangle  - 2Trace\left( {{{\left( {{\nabla _{\nabla u}}A} \right)}^2}} \right).}
\end{eqnarray*}
And from the Theorem \ref{extendedbochner2} the result follows.
\end{proof}
\begin{theorem}\label{application3}Let ${\Sigma ^n} \subset {M^{n + 1}}(c)$ be a Riemannian hypersurface with constant mean curvature, the ambient manifold $ M $ has constant sectional curvature and its the shape operator is $ A $ such that ${\nabla ^*}{T^{{A^2}}} = 0$ and $ A^2 $ is bounded, Assume $ x_0 $ be a fixed point and define $r(x): = dist({x_0},x)$, then
\[0 \ge \frac{{{{\left( {{\Delta _{{A^2}}}r} \right)}^2}}}{{(n - 1)\delta _n^2}} + {\partial _r}.{\Delta _{{A^2}}}r + Ric({\partial _r},{A^2}{\partial _r}) - \left\langle {{\nabla _{{\partial _r}}}div({A^2}),{\partial _r}} \right\rangle  - 2\sum\nolimits_i {\left\langle {{T^{{\nabla _{{\partial _r}}}A}}\left( {{e_i},{\partial _r}} \right),A{e_i}} \right\rangle .} \]
\end{theorem}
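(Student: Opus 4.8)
The plan is to specialize the $A^2$-version of the Bochner formula in Proposition~\ref{application2} to the distance function $u=r$, and then turn the resulting pointwise identity into the stated Riccati-type inequality, exactly along the lines of Theorem~\ref{bochner3} and the proof of Theorem~\ref{Meancurvaturecomparison}(a); the only genuinely new feature is that two ``error'' terms must be reorganized because $A^2$ is not a Codazzi tensor. First I would put $u=r$ in Proposition~\ref{application2}. Since $|\nabla r|\equiv 1$ we have $\nabla|\nabla r|^2=0$ and $L_{A^2}(|\nabla r|^2)=L_{A^2}(1)=0$, so the whole left-hand side and the term $\tfrac12\langle\nabla|\nabla r|^2,div(A^2)\rangle$ vanish, while $\langle\nabla r,\nabla(\Delta_{A^2}r)\rangle=\partial_r.\Delta_{A^2}r$.

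Next I would extract the two structural consequences of the hypotheses. Because the ambient manifold has constant sectional curvature, the example following Definition~\ref{torsion} gives $T^A=(\bar R(\cdot,\cdot)N)^T=0$, so $A$ is a Codazzi tensor; because the hypersurface has constant mean curvature, $Trace(A)$ is constant and therefore $divA=\nabla Trace(A)=0$. The vanishing of $divA$ immediately annihilates the term $-2\langle divA,(\nabla_{\partial_r}A)\partial_r\rangle$ in Proposition~\ref{application2}.

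The computational heart is the identity $div(A^2)=\tfrac12\nabla Trace(A^2)$. Expanding $div(A^2)=\sum_i(\nabla_{e_i}A)Ae_i+A(divA)$ and using $divA=0$, then the Codazzi symmetry $(\nabla_{e_i}A)Y=(\nabla_Y A)e_i$ together with self-adjointness, one gets $\langle div(A^2),Y\rangle=Trace((\nabla_Y A)A)=\tfrac12\,Y.Trace(A^2)$ for every $Y$. Hence along a radial geodesic (where $\nabla_{\partial_r}\partial_r=0$) one has $\langle\nabla_{\partial_r}div(A^2),\partial_r\rangle=\tfrac12\,\partial_r.\partial_r.Trace(A^2)$, so the pair of terms $-\partial_r.\partial_r.Trace(A^2)+\langle\nabla_{\partial_r}div(A^2),\partial_r\rangle$ collapses to $-\langle\nabla_{\partial_r}div(A^2),\partial_r\rangle$. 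After these reductions the specialized formula reads
\[
0 = Trace(A^2\circ hess^2 r) + \partial_r.\Delta_{A^2}r + Ric(\partial_r,A^2\partial_r) - \langle\nabla_{\partial_r}div(A^2),\partial_r\rangle - 2\sum\nolimits_i\langle T^{(\nabla_{\partial_r}A)}(e_i,\partial_r),Ae_i\rangle + 2\,Trace((\nabla_{\partial_r}A)^2).
\]

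Finally I would bound the Hessian term from below as in Theorem~\ref{Meancurvaturecomparison}(a): $A^2$ is positive definite, so a Cauchy--Schwarz argument with $(A^2)^{1/2}$ (noting that $hess\,r$ kills $\partial_r$ and hence effectively acts on the $(n-1)$-dimensional space $\partial_r^{\perp}$, where $\langle X,A^2X\rangle<\delta_n^2$ by Definition~\ref{minmax}) gives $Trace(A^2\circ hess^2 r)\ge (\Delta_{A^2}r)^2/\bigl((n-1)\delta_n^2\bigr)$. The remaining term $2\,Trace((\nabla_{\partial_r}A)^2)\ge 0$ has the favorable sign and is simply discarded. Substituting the Hessian lower bound into the identity $0=(\cdots)$ and dropping the nonnegative leftover yields exactly the asserted inequality. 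I expect the main obstacle to be precisely the bookkeeping in the third step: establishing $div(A^2)=\tfrac12\nabla Trace(A^2)$ and verifying that the trace/divergence terms collapse cleanly, which works only because the Codazzi property and $divA=0$ are used in tandem, and because the surviving $2\,Trace((\nabla_{\partial_r}A)^2)$ happens to carry the correct sign so that it can be thrown away rather than estimated.
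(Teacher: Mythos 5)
Your proposal is correct and follows essentially the same route as the paper: specialize Proposition \ref{application2} to $u=r$, use constant mean curvature to get $divA=0$, collapse $-\partial_r.\partial_r.Trace(A^2)+\langle\nabla_{\partial_r}div(A^2),\partial_r\rangle$ to $-\langle\nabla_{\partial_r}div(A^2),\partial_r\rangle$ via the identity $\partial_r.\partial_r.Trace(A^2)=2\langle\nabla_{\partial_r}(div(A^2)-A\,divA),\partial_r\rangle$, bound the Hessian term as in Theorem \ref{Meancurvaturecomparison}(a), and discard $2\,Trace((\nabla_{\partial_r}A)^2)\ge 0$. The only difference is that you spell out the ``simple computation'' $div(A^2)=\tfrac12\nabla Trace(A^2)$ that the paper merely asserts.
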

\begin{proof} A simple computation shows that when the ambient manifold has costant sectional curvature, then
\[{\partial _r}.{\partial _r}.Trace({A^2}) = Hess\left( {Trace({A^2})} \right)\left( {{\partial _r},{\partial _r}} \right) = 2\left\langle {{\nabla _{{\partial _r}}}\left( {div\left( {{A^2}} \right) - AdivA} \right),{\partial _r}} \right\rangle. \]
By assumption, we know the mean curvature of the hypersurface is constant,\linebreak so $\nabla Trace(A) = divA = 0$, Also note $Trace\left( {{{\left( {{\nabla _{{\partial _r}}}A} \right)}^2}} \right) \ge 0$. So the result follows by Proposition \ref{application2}.
\end{proof}
In the following Proposition we compare the extended Ricci tensor with the Ricci tensor of minimal hypersurfaces in euclidian spaces.
\begin{prop} Let ${\Sigma ^n} \subset {M^{n + 1}}(0)$ be a minimal Riemannian hypersurface then
\[\frac{1}{{\delta _n^2}}Ric\left( {X,{A^2}X} \right) - Ri{c_\Sigma }\left( {X,X} \right) \ge 0\]
\end{prop}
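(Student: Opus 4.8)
The plan is to reduce the statement to the Gauss equation. Since the ambient manifold $M^{n+1}(0)$ is flat, the intrinsic curvature of $\Sigma$ is carried entirely by the shape operator $A$: for tangent fields one has $\langle R_\Sigma(X,Y)Z,W\rangle = \langle AX,W\rangle\langle AY,Z\rangle-\langle AX,Z\rangle\langle AY,W\rangle$. Tracing in the first and last slots over an orthonormal frame $\{e_i\}$ and using that $A$ is self-adjoint (so that $\sum_i\langle Ae_i,Y\rangle\langle AX,e_i\rangle=\langle AX,AY\rangle=\langle A^2X,Y\rangle$) yields the explicit formula
\[Ric_\Sigma(X,Y)=Trace(A)\,\langle AX,Y\rangle-\langle A^2X,Y\rangle.\]
As a sanity check, for the unit sphere $A=\mathrm{Id}$ this returns $Ric_\Sigma=(n-1)\langle\,,\,\rangle$. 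The minimality hypothesis means $Trace(A)=0$, so the formula collapses to $Ric_\Sigma(X,Y)=-\langle A^2X,Y\rangle$.

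With this identity both quantities in the proposition become purely algebraic expressions in $A^2$. Taking $Y=X$ gives $Ric_\Sigma(X,X)=-\langle A^2X,X\rangle$, while taking $Y=A^2X$ gives $Ric(X,A^2X)=Ric_\Sigma(X,A^2X)=-\langle A^2X,A^2X\rangle=-|A^2X|^2$. Hence
\[\frac{1}{\delta_n^2}Ric(X,A^2X)-Ric_\Sigma(X,X)=\langle A^2X,X\rangle-\frac{1}{\delta_n^2}\langle A^2X,A^2X\rangle.\]
To show the right-hand side is nonnegative I would diagonalize $A$ in an orthonormal eigenbasis $\{e_i\}$ with eigenvalues $\lambda_i$ and write $X=\sum_i x_ie_i$; the expression becomes $\sum_i\lambda_i^2x_i^2\bigl(1-\lambda_i^2/\delta_n^2\bigr)$. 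The boundedness hypothesis on $A^2$ (Definition \ref{minmax}), namely $\langle X,A^2X\rangle\le\delta_n^2|X|^2$ and therefore $\lambda_i^2\le\delta_n^2$ for every $i$, makes each factor $1-\lambda_i^2/\delta_n^2$ nonnegative; since $\lambda_i^2x_i^2\ge0$ as well, every summand is nonnegative and the sum is $\ge0$, which is the claim.

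The argument is essentially linear algebra once the Gauss equation is invoked, so there is no serious analytic obstacle; the only delicate points are bookkeeping ones. First, one must fix the sign convention so that the trace of the Gauss identity produces $+Trace(A)\langle AX,Y\rangle-\langle A^2X,Y\rangle$ rather than the opposite sign, which the sphere computation pins down. Second, one must be precise about the normalization: here $\delta_n$ has to be the operator-norm bound of $A$ — equivalently the square root of the upper bound of $A^2$ — since this is exactly what guarantees $\lambda_i^2\le\delta_n^2$, and it is the reading consistent with the $\delta_n^2$ appearing in the Riccati inequality of Theorem \ref{application3}.
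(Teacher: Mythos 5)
Your argument is correct and is essentially the paper's own proof: the authors likewise invoke the Gauss equation with $Trace(A)=0$ to get $Ric_\Sigma(X,Y)=-\langle A^2X,Y\rangle$, and then reduce the claim to the algebraic positivity of $\frac{1}{\delta_n^2}\langle X,A^2(\delta_n^2-A^2)X\rangle$, which is exactly your eigenvalue computation written operator-theoretically. Your remarks on the sign convention and on reading $\delta_n$ as the operator-norm bound of $A$ (so that $A^2\le\delta_n^2$) correctly identify the normalization the paper's one-line estimate tacitly relies on.
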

\begin{proof} Let $ B = \frac{1}{{\delta _n^2}}{A^2} $, by computation we have
\begin{eqnarray*}
   {Ric\left( {X,BX} \right) - Ri{c_\Sigma }\left( {X,X} \right)} &=& {   - \left\langle {X,{A^2}BX} \right\rangle  + \left\langle {X,{A^2}X} \right\rangle }   \\
&= & {  \left\langle {X,{A^2}\left( {1 - B} \right)X} \right\rangle  = \frac{1}{{\delta _n^2}}\left\langle {X,{A^2}\left( {\delta _n^2 - {A^2}} \right)X} \right\rangle  \ge 0}.
\end{eqnarray*}
\end{proof}

\end{document}